\g@addto@macro\normalsize{%
  \setlength\abovedisplayskip{2pt}
  \setlength\belowdisplayskip{2pt}
  \setlength\abovedisplayshortskip{4pt}
  \setlength\belowdisplayshortskip{4pt}
}
\numberwithin{equation}{section}
\crefname{section}{Section}{Sections}
\crefname{subsection}{Subsection}{Subsections}
\crefname{condition}{Condition}{Conditions}
\crefname{hypothesis}{Hypothesis}{Hypothesis}
\crefname{assumption}{Assumption}{Assumptions}
\crefname{lemma}{Lemma}{Lemmas}
\crefname{claim}{Claim}{Claims}
\crefname{remark}{Remark}{Remarks}
\newtheorem{theorem}{Theorem}[section]
\newtheorem{lemma}[theorem]{Lemma}
\newtheorem{claim}[theorem]{Claim}
\newtheorem{proposition}[theorem]{Proposition}
\newtheorem{definition}[theorem]{Definition}
\newtheorem{remark}[theorem]{Remark}        
\newtheorem{hypothesis}[theorem]{Hypothesis}
\numberwithin{equation}{section}
\def\aa{\mathcal{A}}
\newcommand{\pa}{\partial}
\newcommand{\vo}{\vec{o}\@ifnextchar{^}{\,}{}}
\def\Yint#1{\mathchoice
    {\YYint\displaystyle\textstyle{#1}}%
    {\YYint\textstyle\scriptstyle{#1}}%
    {\YYint\scriptstyle\scriptscriptstyle{#1}}%
    {\YYint\scriptscriptstyle\scriptscriptstyle{#1}}%
      \!\iint}
\def\YYint#1#2#3{{\setbox0=\hbox{$#1{#2#3}{\iint}$}
    \vcenter{\hbox{$#2#3$}}\kern-.50\wd0}}
\def\longdash{-\mkern-9.5mu-} 
\def\tiltlongdash{\rotatebox[origin=c]{18}{$\longdash$}}
\def\fiint{\Yint\tiltlongdash}
\def\Xint#1{\mathchoice
    {\XXint\displaystyle\textstyle{#1}}%
    {\XXint\textstyle\scriptstyle{#1}}%
    {\XXint\scriptstyle\scriptscriptstyle{#1}}%
    {\XXint\scriptscriptstyle\scriptscriptstyle{#1}}%
      \!\int}
\def\XXint#1#2#3{{\setbox0=\hbox{$#1{#2#3}{\int}$}
    \vcenter{\hbox{$#2#3$}}\kern-.50\wd0}}
\def\hlongdash{-\mkern-13.5mu-}
\def\tilthlongdash{\rotatebox[origin=c]{18}{$\hlongdash$}}
\def\hint{\Xint\tilthlongdash}
\def\namedlabel#1#2{\begingroup
   \def\@currentlabel{#2}%
   \label{#1}\endgroup
}
\newcommand{\rmh}[1]{\mathpalette{\raisem@th{#1}}}
\newcommand{\raisem@th}[3]{\hspace*{-1pt}\raisebox{#1}{$#2#3$}}
\newcommand{\lsb}[2]{#1_{\rmh{-3pt}{#2}}}
\newcommand{\lsbo}[2]{#1_{\rmh{-1pt}{#2}}}
\newcommand{\redref}[2]{\texorpdfstring{\protect\hyperlink{#1}{\textcolor{black}{(}\textcolor{red}{#2}\textcolor{black}{)}}}{}}
\newcommand{\redlabel}[2]{\hypertarget{#1}{\textcolor{black}{(}\textcolor{red}{#2}\textcolor{black}{)}}}
\newcommand{\descitem}[2]{\item[{(#1):}]\label{#2}}
\newcommand{\descref}[2]{\hyperref[#1]{\textnormal{\textcolor{black}{(}\textcolor{blue}{\bf #2}\textcolor{black}{)}}}}
\newcommand{\ditem}[2]{\item[#1] \label{#2}}
\newcommand{\dref}[2]{\hyperref[#1]{\textcolor{black}{(}\textcolor{blue}{\bf #2}\textcolor{black}{)}}}
\newcommand{\tp}{\tilde{p}}
\newcommand{\tw}{\tilde{w}}
\newcommand\RR{\mathbb{R}}
\newcommand\ZZ{\mathbb{Z}}
\newcommand\NN{\mathbb{N}}
\newcommand{\al}{\alpha}
\newcommand{\be}{\beta}
\newcommand{\de}{\delta}
\newcommand{\ve}{\varepsilon}
\newcommand{\ka}{\kappa}
\newcommand{\la}{\lambda}
\newcommand{\Om}{\Omega}
\DeclareMathOperator{\dv}{div}
\DeclareMathOperator{\spt}{spt}
\DeclareMathOperator{\loc}{loc}
\newcommand{\iprod}[2]{\langle #1 \ ,  #2\rangle}
\newcommand{\abs}[1]{\left| #1\right|}
\newcommand{\lbr}[1][(]{\left#1}
\newcommand{\rbr}[1][)]{\right#1}
\newcommand{\avgs}[2]{\lsbo{\lbr #1 \rbr}{#2}}
\newcommand{\txt}[1]{\qquad \text{#1} \qquad}
\newcounter{whitney}
\newcounter{ineqcounter}
\def\ps@pprintTitle{%
\let\@oddhead\@empty
\let\@evenhead\@empty
\def\@oddfoot{}%
\let\@evenfoot\@oddfoot}
\begin{document}

\begin{frontmatter}

\title{$C^{1,\alpha}$ regularity for quasilinear parabolic equations with nonstandard growth}

\author[myaddress]{Karthik Adimurthi\tnoteref{thanksfirstauthor}}
\ead{karthikaditi@gmail.com and kadimurthi@tifrbng.res.in}

\author[myaddress]{Suchandan Ghosh\tnoteref{thankssecondauthor}}
\ead{suchandan@tifrbng.res.in}

\author[myaddress]{Vivek Tewary\tnoteref{thankssecondauthor}}
\ead{vivektewary@gmail.com and vivek2020@tifrbng.res.in}

\tnotetext[thanksfirstauthor]{Supported by the Department of Atomic Energy,  Government of India, under
	project no.  12-R\&D-TFR-5.01-0520 and SERB grant SRG/2020/000081}
\tnotetext[thankssecondauthor]{Supported by the Department of Atomic Energy,  Government of India, under
	project no.  12-R\&D-TFR-5.01-0520}

\address[myaddress]{Tata Institute of Fundamental Research, Centre for Applicable Mathematics,Bangalore, Karnataka, 560065, India}

\begin{abstract}
In this paper, we obtain $C^{1,\alpha}$ estimates for weak solutions of certain quasilinear parabolic equations satisfying nonstandard growth conditions, the prototype examples being
\[
  \left\{ \begin{array}{l}  u_t - \dv (|\nabla u|^{p-2} \nabla u + a(t)|\nabla u|^{q-2} \nabla u) = 0, \\
    u_t - \dv (|\nabla u|^{p(t)-2} \nabla u) = 0, \end{array} \right.
\]
under the assumption that the solutions a priori have bounded gradient. We build on the recently developed scaling and covering argument  which allows us to consider the singular and degenerate cases in a uniform manner and with minimal regularity requirements on the phase switching factor $a(t)$ and the variable exponent $p(t)$. Moreover, we are able to take any $p \leq q < \infty$ to obtain the desired regularity. 

\end{abstract}

\begin{keyword}
 quasilinear parabolic equations, nonstandard growth, $C^{1,\alpha}$ regularity, unified approach 
 \MSC[2020]  35K59 \sep 35K92 \sep 35B65 
\end{keyword}

\end{frontmatter}
\begin{singlespace}
\tableofcontents
\end{singlespace}
\section{Introduction}
\label{section1}
In this paper, we study gradient regularity of weak solutions of two quasilinear parabolic equations satisfying nonstandard growth conditions assuming that the solutions have bounded gradient. We discuss each of the problems in the following subsections:

\subsection{Multiphase Problems}
The first problem is the multiphase equation having the form:
\begin{align}\label{maineq1}
	u_t-\dv \left(\mathcal{A}_p(\nabla u)+\sum_{i=1}^k a_i(t)\mathcal{A}_{q_i}(\nabla u)\right)=0 \txt{ in } \mathcal{D}^{'}(\Omega_T),
\end{align} where $\mathcal{A}_p(\cdot)$ and $\mathcal{A}_{q_i}(\cdot)$ satisfy the following growth and coercivity conditions for $i\in\{1,2,\ldots,k\}$:
\begin{description}
    \descitem{H1}{H1} $\left\langle \mathcal{A}'_p(z)\zeta,\zeta\right\rangle\geq C_0 |z|^{p-2}|\zeta^2|$ and $\left\langle \mathcal{A}'_{q_i}(z)\zeta,\zeta\right\rangle\geq C_0 |z|^{q_i-2}|\zeta^2|$.
    \descitem{H2}{H2} $|\mathcal{A}_p(z)|+|\mathcal{A}'_p(z)||z| \leq C_1 |z|^{p-1}$ and $|\mathcal{A}_{q_i}(z)|+|\mathcal{A}'_{q_i}(z)||z| \leq C_1 |z|^{q_i-1}$.
    \descitem{H3}{H3} $0\leq a_i(t)\leq M$ is a bounded, measurable function for some fixed number $M$.
\end{description} 
Here we have denoted $\mathcal{A}'_{\cdot}(z)\coloneqq \frac{d\mathcal{A}_{\cdot}(z)}{dz}$. 

The study of elliptic multiphase problems was first studied in the papers \cite{zhikovLavrentievPhenomenonHomogenization1993,zhikovLavrentievPhenomenon1995} in homogenization theory where it serves as a model for highly anisotropic materials. A systematic study of their regularity properties was initiated in \cite{marcelliniRegularityMinimizersIntegrals1989}.

Since these equations are similar to the parabolic $p$-Laplace equation having the prototype form $u_t - \dv \aa_p(\nabla u)$, we follow the idea of intrinsic scaling developed by DiBenedetto-Friedman~\cite{dibenedettoRegularitySolutionsNonlinear1984,dibenedettoHolderEstimatesNonlinear1985ad} (see also \cite[Chapter IX]{dibenedettoDegenerateParabolicEquations1993} for more details).

Since the  technique of intrinsic scaling  was developed to handle two different scalings, whereas the equation \cref{maineq1} has more than two different scaling factors, it is not entirely clear how to adapt the techniques from \cite{dibenedettoHolderEstimatesNonlinear1985} to study \cref{maineq1}.  As a result, obtaining $C^{1,\alpha}$ regularity was a difficult problem even in the case $a_i(t) \equiv 1$ and $k=1$. These sort of equations have gained a lot of interest recently, see \cref{tab:my-table}, \cref{tab:my-table2} and \cref{tab:my-table3} for some of the recent developments regarding regularity for these equations. 

In this paper, we obtain $C^{1,\alpha}$ regularity for weak solutions of \cref{maineq1} by suitably adapting the arguments from \cite{dibenedettoHolderEstimatesNonlinear1985} and combining it with  with the scaling and covering arguments from \cite{adimurthiUnifiedApproachAlpha2020}. 
\subsubsection{Comparison to past results and improvements} We shall compare below our results with previous developments in this subject and highlight some of the important improvements:
\begin{itemize}
    \item The most important aspect of our approach is that we no not need any restrictions relating $p$ and  $\{q_i\}$. In the double phase elliptic case, $C^{1,\alpha}$ regularity  was proved with $q$ satisfying some additional assumptions, due to the presence of  Lavrentiev phenomenon. We are able to sidestep such considerations since we  start with the assumption that  the solution  is Lipschitz continuous and hence we are automatically in the absence of Lavrentiev phenomenon. 
    \item Due to the uniform nature of the proof, we do not need to distinguish between the singular and degenerate regimes,  in particular we impose no additional restrictions on  $p$ and $\{q_i\}$.
    \item We only assume non-negativity, boundedness and measurability on the coefficients $\{a_i(t)\}$. In the elliptic setting, $C^{1,\alpha}$ theory was developed in \cite{colomboRegularityDoublePhase2015,colomboBoundedMinimisersDouble2015} and the results of this paper paves the way for implementing the strategy from \cite{colomboRegularityDoublePhase2015,colomboBoundedMinimisersDouble2015} to obtain analogous results in the parabolic setting with additional assumptions satisfied by  $\{a_i(x,t)\}$  only in the $x$ variable.
\end{itemize}

\begin{remark}[Assumption of Lipschitz regularity] In this paper, we have assumed that the solutions a priori have bounded spatial gradient, i.e., $|\nabla u| \in L^{\infty}_{\loc}$. This allows us the freedom to not impose any restrictions on $p$ and $\{q_i\}$ which are customary in regularity results for multiphase problems. A brief survey of some well known regularity results for problems with $(p,q)$-growth is given in~\cref{tab:my-table},~\cref{tab:my-table2} and~\cref{tab:my-table3}, noting that a more detailed list can be found in the references therein.
\end{remark}

\begin{table}[ht!]
\caption{Local boundedness for problems with nonstandard growth}
	\resizebox{\textwidth}{!}{%
		\bgroup
		\def\arraystretch{2}
		\begin{tabular}{l|l|l|l|c}
			\hline\hline
			\textit{Equation or Variational Integral} &
			\textit{Assumption on $a$} &
			\textit{Assumption on $u$} &
			\textit{Restriction on $p,q$} & \textit{References}   \\ \hline
			\multirow{2}{0.2\linewidth}{\begin{tabular}[c]{@{}l@{}}$\int_{\Omega}F(x,u,\nabla u)dx$ where \\ $C_0|z|^p\leq F(x,u,\nabla u)\leq C_1(1+|z|^q)$\end{tabular}} &
			\multirow{2}{0.2\linewidth}{$a(x)$ is non-negative, measurable and bounded.} &
			\multirow{2}{0.2\linewidth}{$u$ is a minimizer of the functional of nonstandard growth.} &
			$q \leq p^* = \frac{Np}{N-p}$ & \cite{marcelliniRegularityMinimizersIntegrals1989,moscarielloHolderContinuityMinimizers1991,boccardoInftyRegularityVariational1990,fuscoLocalBoundednessMinimizers1990}
			\\ \cline{4-5} 
			&
			&
			&
			$q \leq p_{N-1}^* = \frac{(N-1)p}{N-1-p}$ & \cite{hirschGrowthConditionsRegularity2020a}
			\\ \hline
			\multirow{4}{*}{$\partial_t u - \mbox{div}\left( |\nabla u|^{p-2}\nabla u + a(x,t)|\nabla u|^{q-2}\nabla u \right) = 0$} &
			\multirow{4}{0.2\linewidth}{$a(x,t)$ is non-negative, measurable and bounded.} &
			\multirow{2}{*}{$u$ is a weak solution.} &
			\multirow{2}{*}{$q \leq p_* = p \frac{N+2}{N}$} & \cite{yuBoundednessSolutionsParabolic1997}
			\multirow{2}{*}{} \\
			&
			&
			&
			&
			\\ \cline{3-5} 
			&
			&
			$u$ is a variational solution.&
			\multirow{2}{*}{$q < p_* = p \frac{N+2}{N}$} & \cite{singerLocalBoundednessVariational2016}
			\multirow{2}{*}{} \\
			&
			&
			&
			&
			\\ \hline\hline
		\end{tabular}%
		\egroup
	}
	\label{tab:my-table}
\end{table}
For further results regarding boundedness of local minimisers, we refer to \cite{hirschGrowthConditionsRegularity2020a} for a detailed history, noting that \cite{marcelliniRegularityExistenceSolutions1991} contains sharp counterexamples. 

\begin{table}[ht!]
		\caption{Local boundedness of gradient for minimizers of autonomous integrals and solutions of related parabolic equations.}
	\bgroup
	\def\arraystretch{2.5}
	\resizebox{\textwidth}{!}{%
		\begin{tabular}{l|l|l|lc}
			\hline\hline
            \textit{Equation or Variational Integral} & \textit{Assumptions / Hypotheses}     & \textit{Conclusion}                        & \textit{References}                     \\ \hline
			\multirow{4}{0.5\linewidth}{
				$\begin{aligned}[t]
					&\int_{\Omega}F(\nabla u)dx  \ \mbox{ where } \\ 
					&\qquad z \to F(z) \mbox{ is } C^2, \\ 
					&\qquad \nu|z|^p\leq F(z)\leq L(1+|z|^q), \\ 
					&\qquad \langle F_{zz}\lambda,\lambda\rangle \geq \nu(1+|z|^2)^{\frac{p-2}{2}}|\lambda|^2,\\
					&\qquad \langle F_{zz}\lambda,\lambda\rangle\leq L(1+|z|^2)^{\frac{q-2}{2}}|\lambda|^2.
				\end{aligned}$
			} &
			$2 \leq p < q$, $ \frac{q}{p} < 1 + \frac{2}{N-2}$ with $N \geq 3$  &
			$u \in W^{1,q} \Rightarrow u \in W^{1,\infty}$ & \cite{marcelliniRegularityExistenceSolutions1991}
			\\ \cline{2-4} 
			& $2 \leq p < q$, $ \frac{q}{p} < 1 + \frac{2}{N}$ & $u \in W^{1,p} \Rightarrow u \in W^{1,\infty}$ & \cite{marcelliniRegularityExistenceSolutions1991}  \\ \cline{2-4}
			
			& $2 \leq p < q$, $ \frac{q}{p} < 1 + \frac{2}{N-3}$ with $N \geq 4$                             & $u \in W^{1,q} \Rightarrow u \in W^{1,\infty}$ & \cite{bellaRegularityMinimizersScalar2020} \\ \cline{2-4} 
			& $2 \leq p < q$, $ \frac{q}{p} < 1 + \min\left\{1,\frac{2}{N-1}\right\}$ with $N \geq 2$                             & $u \in W^{1,1} \Rightarrow u \in W^{1,\infty}$              & \cite{bellaRegularityMinimizersScalar2020} \\ \cline{2-4}
			& $2 \leq p < q$, $ \frac{q}{p} < 1 + 2\min\left\{\frac{1}{p},\frac{1}{N}\right\}$ with $N \geq 2$                             & $u \in W^{1,p} \Rightarrow u \in W^{1,q}$              & \cite{espositoHigherIntegrabilityMinimizers1999} \\ \hline
            \multirow{4}{*}{$\begin{aligned}[t]
					& \partial_t u - \mbox{div}\, b(t,x,\nabla u)   = 0 \\
					&\qquad \mbox{$b$ is differentiable in $x$ and $z$,} \\
					&\qquad|b(t,x,z)|+|\nabla_{z}b(t,x,z)| |z|\leq L|z|^{q-1},\\  &\qquad\langle\nabla_{z}b(t,x,z)\zeta,\zeta\rangle\geq\nu|z|^{p-2}|\zeta|^2,\\  
					&\qquad|\nabla_x b_j(t,x,z)|\leq L|z|^{p+q-2\over 2},\\  &\qquad|D_{z_j}b_i(t,x,z)-D_{z_i}b_j(t,x,z)|\leq L|z|^{p+q-4\over 2}.  \end{aligned}$} &
			$\begin{aligned}[t]
				&\frac{2N}{N+2}\leq p \leq q \leq p + \frac{4}{N},\\
				&\mbox{for example, } b(t,x,z)=|z|^p+a(t)|z|^q.
				\end{aligned}$   &    $u \in L^q(W^{1,q}) \Rightarrow \nabla u \in L^{\infty}$                   & \cite{bogeleinParabolicEquationsGrowth2013,singerParabolicEquationsGrowth2015}  \\
			     \cline{2-4} &
			     $\begin{aligned}[t]
			     	&\frac{2N}{N+2}\leq p \leq q \leq p + \frac{4}{N+2},\\
			     	&\mbox{for example, } b(t,x,z)=|z|^p+a(t)|z|^q.
			     \end{aligned}$   &        $\begin{aligned}
			     &\mbox{Existence of solution in }L^q(W^{1,q}),\\
			     &\nabla u\in L^\infty
		     \end{aligned}$              & \cite{bogeleinParabolicEquationsGrowth2013,singerParabolicEquationsGrowth2015}  \\
			     \cline{2-4} 
			& $\begin{aligned}[t]
				&\frac{2N}{N+2} < p< q < p + \frac{\min\{2,p\}}{N+2},\\
				& \mbox{for example, } b(t,x,z)=|z|^p+a(t,x)|z|^q. 
				\end{aligned}$ & Existence of solution in $L^q(W^{1,q}) \cap L^{\infty}$                      & \cite{singerExistence2016} \\ 
			\hline\hline
		\end{tabular}%
	}
	\egroup
	\label{tab:my-table2}
\end{table}
\FloatBarrier
We mention some other papers that include important contribution towards Lipschitz regularity which don't necessarily  fit into the framework of \cref{tab:my-table2}:  \cite{espositoRegularityResultsMinimizers2002,defilippisGradientBoundsSolutions2020}

\begin{table}[ht!]
\caption{Regularity results for minimizers of non-autonomous integrals with non-standard growth and solutions of related parabolic equations.}
	\def\arraystretch{2}
	\resizebox{\textwidth}{!}{%
		\begin{tabular}{l|l|l|l|l}
			\hline\hline
			\textit{Equation or Variational Integral} &
            \textit{Hypotheses} &
            \textit{Restriction on $p,q$} &
			\textit{Conclusion} & \textit{Reference} \\ \hline
			\multirow{5}{*}{$\begin{aligned}[t] &\int_\Omega F(x,\nabla u)\,dx, \\ & F(x,z) = |z|^p + a(x) |z|^q, \\ & 0 \leq a(x) \leq M. \end{aligned}$} &
			\multirow{2}{*}{$\begin{aligned}[t] & a\in C^{0,\alpha}(\Omega), \\ & 0<\al\leq 1.\end{aligned}$} &
			$\frac{q}{p}<1+\frac{\al}{N}$ &
			$u \in W^{1,p} \Rightarrow u\in W^{1,q}$ & \cite{espositoSharpRegularityFunctionals2004}
			\\ \cline{3-5} 
			&
			&
			$\frac{q}{p}<1+\frac{\al}{N}$ &
			$u \in W^{1,p} \Rightarrow u\in C^{1,\beta}_{{\loc}}$ & \cite{colomboRegularityDoublePhase2015}
			\\ \cline{2-5} 
			&
			$\begin{aligned}[t] & a\in C^{0,\al}(\Omega), \\ & 0<\al\leq 1,\\ & u \in L^\infty_{\loc}.\end{aligned}$ &
			$q<p+\al$ &
			$u \in W^{1,p} \Rightarrow u\in C^{1,\beta}_{\loc}$ & \cite{colomboBoundedMinimisersDouble2015}
			\\ \cline{2-5} 
			&
			\multirow{2}{*}{$\begin{aligned}[t] & a\in W^{1,d}(\Omega), \\ & d>N.\end{aligned}$} &
			\multirow{2}{*}{$\frac{q}{p}<1+\frac{1}{N}-\frac{1}{d}$} &
			\multirow{2}{*}{$\nabla u \in L^\infty_{\loc}$} & 
			\multirow{2}{*}{\cite{defilippisLipschitzBoundsNonautonomous2021}} \\
			&
			&
			&
			&
			\\ \hline
			$\begin{aligned}[t] & \int_\Omega F(x,\nabla u)\,dx, \\ &F(x,z) = |z|^p + a(x) |z|^q+c(x) |z|^s.\end{aligned}$ &
			$\begin{aligned}[t] & a\in C^{0,\beta_1}(\Omega),\\ & c\in C^{0,\beta_2}(\Omega), \\ &0<\beta_1,\beta_2\leq 1. \end{aligned}$ &
			$\begin{aligned}[t] & \frac{q}{p}<1+\frac{\beta_1}{N},\\ & \frac{s}{p}<1+\frac{\beta_2}{N}.\end{aligned}$ &
			$u\in C^{1,\alpha}_{\loc}$ & \cite{defilippisRegularityMultiphaseVariational2019}
			\\ \hline
			$\partial_t u - \mbox{div}\left( |\nabla u|^{p-2}\nabla u + a(x,t)|\nabla u|^{q-2}\nabla u \right) = 0$ &
			$\begin{aligned}[t] & a\in L^\infty(\Omega_T), \\ & \partial_x a \in L^d(\Omega_T),\\ & d>N. \end{aligned}$ &
			$\begin{aligned}[t]
				& q < p + 2\left(\frac{1}{N+2}-\frac{p}{2d}\right),\\
				& p > \frac{2Nd}{(N+2)(d-2)}.\end{aligned}$ &
			$\nabla u \in L^\infty_{\loc}$ & \cite{defilippisGradientBoundsSolutions2020}
			\\ \hline\hline
		\end{tabular}%
	}
	\label{tab:my-table3}
\end{table}

\subsection{Variable Exponent Problems}
The second problem we  study is the variable exponent problems having the form:
\begin{align}\label{maineq2}
	u_t-\dv (|\nabla u|^{p(t)-2}\nabla u)=0 \txt{ in } \mathcal{D}^{'}(\Omega_T),
\end{align} where the variable exponent $1<p\leq p(t)\leq q<\infty$ is a measurable function depending only on the time variable. Previously, $C^{1,\alpha}$ regularity was proved in \cite{bogeleinHolderEstimatesParabolic2012} for the more general exponent of the form $p(x,t)$ under the additional assumption on  $p(x,t)$ (H\"older continuous in $x$ and $t$). It is interesting to note that if we restrict to the case of $p(t)$, the approach in \cite{bogeleinHolderEstimatesParabolic2012} can be suitably modified to prove H\"older continuity of the gradient with $p(t)$ satisfying a much weaker $\log$-H\"older criterion, as obtained in~\cite{okRegularityParabolicEquations2018}. 

In this paper, we prove $C^{1,\alpha}$ regularity with minimal regularity assumptions on $p(t)$ and list below some of the important improvements:
\begin{itemize}
    \item In our approach, we only need to assume $p(t)$ is measurable to obtain gradient H\"older regularity.  Note that all previously known  approaches  required at least $\log$-H\"older continuity of $p(t)$ (see~\cite{okRegularityParabolicEquations2018}), thus our structural assumption is optimal and considerably weakens previously required hypothesis.
    \item Due to the uniform nature of our approach, we do not need to differentiate between the cases $p(t) \leq 2$ or $p(t) \geq 2$ throughout the proof.
    \item Our approach is  somewhat simpler  compared to \cite{bogeleinHolderEstimatesParabolic2012,okRegularityParabolicEquations2018} where a lot of variable exponent machinery was needed to be setup regarding the size of the cylinders and the exponent. On the other hand, we do not need any of those tools and instead, our proof follows from simpler considerations, all thanks to the additional freedom obtained through the new covering argument developed in~\cite{adimurthiUnifiedApproachAlpha2020}.
\end{itemize}


\subsection{Approximation hypothesis}
\label{app_hyp}
In order to obtain $C^{1,\alpha}$ regularity, the strategy of the proof requires regularizing the equation and then passing through the limit in appropriate sense, for example, see \cite[Chapter VIII]{dibenedettoDegenerateParabolicEquations1993} for the details. 
\begin{hypothesis}\label{hyp_H}
We say that a divergence form operator $\mathcal{L}$  satisfies \cref{hyp_H}  if solutions to $\mathcal{L}u=0$ can be approximated  by smooth  functions $\{u^{\epsilon}\}$ which are solutions to $\mathcal{L}_{\epsilon} u^{\epsilon}=0$ where $\mathcal{L}_{\epsilon}$ satisfies similar structural assumptions as $\mathcal{L}$. 
\end{hypothesis}

We note that such approximations are known to exist for the prototype parabolic p-Laplacian problem and the variable exponent problem \cref{maineq2}.  These  approximations are crucially used to rigorously justify our proofs of the two alternatives \cref{alt1} and \cref{alt2} in the proof of $C^{1, \alpha}$ regularity. We however skip reference  to such approximations in our proofs because this is quite standard in the literature, see for instance \cite{kuusiWolffGradientBound2014}.

However for \cref{maineq1}, we are not aware whether \cref{hyp_H} holds with no restriction between $p$ and $\{q_i\}$. Hence, we assume such an hypothesis  holds true for \cref{maineq1}, and obtain $C^{1,\alpha}$ regularity  in \cref{corthem2}.  One scenario where our result from \cref{corthem2}  is applicable, is when there are additional restrictions on $\{q_i\}$ as obtained in \cite{defilippisGradientBoundsSolutions2020} where the regularization scheme is developed, see~\cref{tab:my-table3}. In particular, once \cref{hyp_H} is satisfied with Lipschitz regularity, then our approach immediately gives $C^{1,\alpha}$ regularity.

\section{Preliminaries}
\label{section2}

In this section, we shall collect all the preliminary material needed in subsequent sections.  Before we recall some useful results, let us define the notion of solutions considered in this paper. In order to do this, let us first define Steklov average as follows: let $h \in (0,2T)$ be any positive number, then we define
\begin{equation*}
  u_{h}(\cdot,t) := \left\{ \begin{array}{ll}
                              \hint_t^{t+h} u(\cdot, \tau) \ d\tau \quad & t\in (-T,T-h), \\
                              0 & \text{else}.
                             \end{array}\right.
 \end{equation*}
We shall now define the notion of weak solutions to \cref{maineq1} and \cref{maineq2}.
\begin{definition}[Weak solution of~\cref{maineq1}]
\label{weak_sol1}
    We say that \[u \in C^0(-T,T;L^2_{\loc}(\Om)) \cap L^p(-T,T;W^{1,p}_{\loc}(\Om))\cap \lbr \bigcap_{i=1}^k L^{q_i}(-T,T;W^{1,q_i}_{\loc}(\Om)) \rbr,\] is a weak solution of \cref{maineq1} if,  for any $\phi \in C_c^{\infty}(\Om)$ and any $t \in (-T,T)$,  the following holds:
\begin{equation*}
  \int_{\Om \times \{t\}} \left\{ \frac{d [u]_{h}}{dt} \phi + \iprod{[\aa_p(\nabla u)]_{h}}{\nabla \phi} + \sum_{i=1}^k\iprod{[a_i(t)\aa_{q_i}(\nabla u)]_{h}}{\nabla \phi}\right\} \,dx = 0 \txt{for any}0 < t < T-h.
\end{equation*}
Analogously, we say that \[u \in C^0(-T,T;L^2_{\loc}(\Om)) \cap L^{p(\cdot)}(-T,T;W^{1,p(\cdot)}_{\loc}(\Om)),\] is a weak solution of \cref{maineq2} if,  for any $\phi \in C_c^{\infty}(\Om)$ and any $t \in (-T,T)$,  the following holds:
\begin{equation*}
  \int_{\Om \times \{t\}} \left\{ \frac{d [u]_{h}}{dt} \phi + \iprod{[|\nabla u|^{p(\cdot)-2}\nabla u]_{h}}{\nabla \phi}\right\} \,dx = 0 \txt{for any}0 < t < T-h.
\end{equation*}
\end{definition}

\begin{definition}[Function Space]\label{func_space}
For any $1<\tp<\infty$ and any $m > 1$, we define the following Banach spaces:
\begin{equation*}%
\begin{array}{c}
V^{m,\tp}(\Om_T) := L^{\infty}(-T,T;L^m(\Om)) \cap L^{\tp}(-T,T;W^{1,\tp}(\Om)),\\
V^{m,\tp}_0(\Om_T) := L^{\infty}(-T,T;L^m(\Om)) \cap L^{\tp}(-T,T;W^{1,\tp}_0(\Om)).
\end{array}\end{equation*}%
These function spaces have the norm 
\begin{equation*}
    \|f\|_{V^{m,\tp}(\Om_T)} := \sup_{-T < t<T} \|f(\cdot,t)\|_{L^m(\Om)} + \|\nabla f\|_{L^{\tp}(\Om_T)}.
\end{equation*}%
\end{definition}

We need the following parabolic Sobolev embedding, see \cite[Corollary 3.1 of Chapter I]{dibenedettoDegenerateParabolicEquations1993} for the details.
    \begin{lemma}\label{sobolev-poincare}
    Let $1<s<\infty$ and  $v \in V_0^{s}(Q)$ in some cylinder $Q=B \times I$, then
    \begin{equation*}
    \|v \|_{L^{s}(Q)}^{s} \leq C \abs{\{ |v| > 0\}}^{\frac{s}{N+s}} \|v \|_{V^{s}(Q)}^{s}.
    \end{equation*}
    \end{lemma}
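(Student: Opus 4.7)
The plan is to deduce \cref{sobolev-poincare} from the standard parabolic Gagliardo--Nirenberg embedding $V_0^{s,s}(Q)\hookrightarrow L^{\frac{s(N+s)}{N}}(Q)$ via a single application of H\"older's inequality that isolates the measure of the support.

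First, I would observe that $|v|^s = |v|^s \chi_{\{|v|>0\}}$ pointwise, and apply H\"older's inequality on $Q$ with conjugate exponents $\frac{N+s}{N}$ and $\frac{N+s}{s}$ to get
\[
  \int_Q |v|^s \, dx\, dt \leq \left( \int_Q |v|^{\frac{s(N+s)}{N}} \, dx\, dt \right)^{\!\frac{N}{N+s}} \abs{\{|v|>0\}}^{\frac{s}{N+s}}.
\]
The first factor is exactly $\|v\|_{L^{s(N+s)/N}(Q)}^{s}$, and the exponent on the measure is the one claimed.

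Next, I would invoke the parabolic Sobolev embedding
\[
  \|v\|_{L^{s(N+s)/N}(Q)} \leq C \|v\|_{V_0^{s,s}(Q)},
\]
which is exactly the content of the cited \cite[Corollary 3.1, Chapter I]{dibenedettoDegenerateParabolicEquations1993}. For completeness I would indicate its one-line derivation: at each fixed time $t \in I$, the spatial Sobolev-Poincar\'e inequality bounds $\|v(\cdot,t)\|_{L^{s^*}(B)}$ by $\|\nabla v(\cdot,t)\|_{L^s(B)}$ (with $s^* = Ns/(N-s)$ when $s<N$, and any large finite exponent otherwise); then interpolating in $L^q$ between $L^s$ and $L^{s^*}$ with $q = s(N+s)/N$ yields
\[
  \|v(\cdot,t)\|_{L^q(B)}^q \leq C \|\nabla v(\cdot,t)\|_{L^s(B)}^s \|v(\cdot,t)\|_{L^s(B)}^{s^2/N},
\]
and integrating in $t$ and bounding the $L^s$ factor by $\sup_t \|v(\cdot,t)\|_{L^s(B)}^{s^2/N}$ delivers the embedding. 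Raising the embedding to the $s$-th power and combining it with the H\"older step above yields the claim with constant $C$ depending only on $N$ and $s$.

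The only subtlety is the case $s\geq N$, where one cannot directly use $s^* = Ns/(N-s)$; here I would simply pick any exponent $s^\sharp$ large enough to make the interpolation work (since $W^{1,s}_0(B) \hookrightarrow L^{s^\sharp}(B)$ for every $s^\sharp < \infty$), and the resulting constant still depends only on $s$ and $N$. Since this is the standard argument and is already fully written out in the cited reference, I would likely state the proof as an immediate consequence of that corollary plus the one-line H\"older splitting, without reproducing the interpolation.
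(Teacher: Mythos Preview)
Your proposal is correct and is exactly the standard derivation: the paper does not give its own proof but simply refers to \cite[Corollary 3.1, Chapter I]{dibenedettoDegenerateParabolicEquations1993}, and the argument there is precisely the H\"older splitting on $\{|v|>0\}$ followed by the parabolic Gagliardo--Nirenberg embedding $V_0^{s,s}(Q)\hookrightarrow L^{s(N+s)/N}(Q)$ that you outline.
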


Next we recall a well known iteration lemma, see \cite[Lemma 4.1 of Chapter I]{dibenedettoDegenerateParabolicEquations1993} for the details.
\begin{lemma}
    \label{iteration}
    Let $\{X_n\}$ for  $n=0,1,2,\ldots,$ be a sequence of positive numbers, satisfying the recursive inequalities
    \begin {equation*}
    X_{n+1} \leq C b^{n}  X_{n}^{1+\alpha},
    \end {equation*}
    where $C,b >1$ and $\alpha>0$ are given numbers. If
    \[
    X_0\leq C^{-\frac{1}{\alpha}}b^{-\frac{1}{\alpha^2}},
    \]
    then $\{X_n\}$ converges to zero as $n\rightarrow \infty$.
\end{lemma}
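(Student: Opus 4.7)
The statement is the classical geometric-iteration lemma from DiBenedetto's book, so the plan is to prove it by induction by establishing a geometric decay of the form $X_n \leq X_0 r^n$ for a suitable ratio $r \in (0,1)$ determined entirely by $b$ and $\alpha$ (the constant $C$ and the smallness hypothesis on $X_0$ will be used to absorb the error at each induction step).

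\medskip

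\noindent\textbf{Choice of the decay rate.} I would guess the ansatz $X_n \leq X_0 r^n$ and feed it into the recursion. Plugging in gives
\[
X_{n+1} \leq C b^n \lbr X_0 r^n\rbr^{1+\alpha} = \lbr C X_0^{\alpha}\rbr \cdot b^n \cdot r^{n\alpha} \cdot X_0 r^n.
\]
Closing the induction requires $\lbr C X_0^{\alpha}\rbr b^n r^{n\alpha} \leq r$. The $n$-dependence forces the choice $r^{\alpha} = b^{-1}$, i.e. $r = b^{-1/\alpha}$, which is strictly less than $1$ since $b>1$. With this choice the inequality reduces to $C X_0^{\alpha} \leq r = b^{-1/\alpha}$, which is exactly the hypothesis $X_0 \leq C^{-1/\alpha} b^{-1/\alpha^2}$ after raising to the $\alpha$-th power. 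So the smallness condition is sharp for this argument.

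\medskip

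\noindent\textbf{Induction.} With $r := b^{-1/\alpha}$, the base case $X_0 \leq X_0 r^0$ is trivial. For the inductive step, assuming $X_n \leq X_0 r^n$ and using the recursion together with the computation above, all factors cancel precisely and one gets $X_{n+1} \leq X_0 r^{n+1}$. Since $r<1$, this yields $X_n \to 0$ as $n \to \infty$, and in fact one obtains the quantitative rate $X_n \leq X_0 b^{-n/\alpha}$, which is often what is needed in applications (e.g.\ for quantitative De Giorgi-type arguments in the present paper).

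\medskip

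\noindent\textbf{Main obstacle.} There is essentially no obstacle here beyond identifying the correct ratio $r$; the only subtle point is verifying that the hypothesis $X_0\leq C^{-1/\alpha}b^{-1/\alpha^2}$ is precisely the threshold at which the geometric ansatz closes, and that the base case can indeed be chosen as $n=0$ (as opposed to some shifted starting index, which would weaken the smallness constant). The proof is therefore just one guess and a one-line induction.
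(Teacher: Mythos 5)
Your proof is correct and is precisely the standard argument for this lemma (the paper simply cites DiBenedetto's book, where the proof is exactly this geometric-decay induction with ratio $r=b^{-1/\alpha}$). Nothing further is needed.
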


\subsection{Notation}
We list below the notation that we will use throughout the paper: 
\begin{enumerate}[(i)]
    
 \item\label{not1} We shall denote a point in $\RR^{N+1}$ by $z = (x,t) \in \RR^N \times \RR$.
 
 \item We shall use the notation $Q_{a,b}(x_0,t_0)$ to denote a parabolic cylinder of the form $B_a(x_0) \times (t_0-b,t_0+b)$. 
 \item\label{not11} Henceforth, we shall fix a cylinder $Q_0 = B_{R_0} \times (-R_0^2,R_0^2)$ centered at $(0,0)$ and its scaled version $4Q_0$.
  \item\label{not12} We shall denote the boundary of $4Q_0$ by $$\Gamma  = \lbr[[]B_{4R_0} \times\left\{t=-(4R_0)^2\right\}\rbr[]] \bigcup\lbr[[] B_{4R_0} \times\left\{t=(4R_0)^2\right\} \rbr[]]\bigcup \lbr[[]\pa B_{4R_0} \times \lbr-(4R_0)^2,(4R_0)^2\rbr\rbr[]].$$
 
 \item\label{not2} Let $\rho >0$, $\la \geq 1$ and $R_0 >0$ be fixed numbers, then for a given point $z_0 = (x_0,t_0) \in \RR^{N+1}$, we define the following cylinders:
 \begin{equation*}
Q_{\rho} (x_0,t_0)  :=  B_{\rho}(x_0) \times (t_0-\rho^2, t_0+\rho^2) \txt{and}
Q_{\rho}^{\la} (x_0,t_0)  :=  B_{\la^{-1}\rho}(x_0) \times (t_0-\la^{-p}\rho^2, t_0+\la^{-p}\rho^2).
\end{equation*}

\item\label{not3} Let $\la \geq 1$ be given, then for given two points $z_1 = (x_1,t_1) \in \RR^{N+1}$ and $z_2 = (x_2,t_2) \in \RR^{N+1}$, we need the following metrics:
 \begin{equation*}
	\begin{array}{ll}
		d (z_1, z_2)  := \max\{ |x_1 - x_2|,|t_1 - t_2|^{1/2}\}, & d_{\la} (z_1, z_2)  :=  \max\{\la|x_1 - x_2|,\la^{p/2}|t_1 - t_2|^{1/2}\}, \\
		d (z_1, \mathcal{K}) :=  \inf_{z_2 \in \mathcal{K}} d (z_1, z_2), & d_{\la} (z_1, \mathcal{K}) :=  \inf_{z_2 \in \mathcal{K}} d_{\la} (z_1, z_2). 
	\end{array}
\end{equation*}

 \item\label{not_par_bnd} For a given space-time cylinder $Q = B_R \times (a,b)$, we denote the parabolic boundary of $Q$ to be the union of the bottom and the lateral boundaries, i.e., $\pa_pQ = B_r \times \{t=a\} \bigcup \pa B_R \times (a,b)$.
 
\end{enumerate}
\section{Main Theorems}
\label{section3}

We obtain the following result for multiphase problems.
\begin{theorem}
\label{corthem1}
    Let $1 < p \leq \{q_1,q_2,\ldots,q_k\}< \infty$  for some $k \in \NN$ and $u$ be a weak solution of the prototype equation
    \[
        u_t - \dv \lbr \mathcal{A}_p(\nabla u) + \sum_{i=1}^k a_i(t) \mathcal{A}_{q_i}(\nabla u)\rbr = 0,
    \]
where $0 \leq a_i(t) \leq M$ is bounded, measurable functions and $\mathcal{A}_p$ and $\mathcal{A}_{q_i}$ satisfy the growth conditions in~\descref{H1}{H1},~\descref{H2}{H2} for $i\in\{1,2,\ldots,k\}$. Furthermore, assume that \cref{hyp_H}  and   $|\nabla u| \in L^{\infty}_{\loc}$ holds, then given any cylinder $Q_0 = B_{R_0} \times (-R_0^2,R_0^2)$,  there exists $\al= \al(N,p,\{q_i\},C_0,C_1,M,\mu_0) \in (0,1)$ such that for any $z_0, z_1 \in Q_0$, there holds
    \[
        |\nabla u(z_0) - \nabla u(z_1)| \leq C \mu_0^a \lbr \frac{d(z_0,z_1)}{R_0}\rbr^{\al},
    \]
    where $C= C(N,p,\{q_i\},C_0,C_1,M,\mu_0)$, $a= a(N,p,\{q_i\},\al)$ and 
    \[
       \mu_0 :=\sup_{4Q_0} |\nabla u|.
    \]
    Moreover, if $\mu_0\leq 1$, then $\al$ and $C$ can be taken to be independent of $\mu_0$. 
\end{theorem}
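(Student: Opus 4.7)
My plan is to adapt the intrinsic scaling method of DiBenedetto--Friedman to the multiphase setting by combining it with the scaling-and-covering argument of \cite{adimurthiUnifiedApproachAlpha2020}. The a priori Lipschitz bound $\mu_0=\sup_{4Q_0}|\nabla u|$ is the central device: once one works on an intrinsic cylinder $Q_\rho^\lambda(z_0)$ with scaling factor $\lambda\in[1,\mu_0]$ chosen so that $|\nabla u|\lesssim\lambda$ on the cylinder, each $q_i$-phase contribution $a_i(t)|\nabla u|^{q_i-2}\nabla u$ is dominated by $M\lambda^{q_i-p}\cdot|\nabla u|^{p-2}\nabla u$. This lets us treat the whole operator as a perturbation of a $p$-Laplace-type structure at the chosen scale, with constants depending only on $N,p,\{q_i\},C_0,C_1,M,\mu_0$. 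Throughout, \cref{hyp_H} lets us work with the regularized operators $\mathcal{L}_\varepsilon$ so that all differentiations and test function choices are rigorously justified before passing to the limit.

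First I would establish Caccioppoli-type energy estimates on intrinsic cylinders $Q_\rho^\lambda(z_0)\subset 4Q_0$, both for $(u-k)_\pm$ (via Steklov averages) and for the difference quotients/derivatives $\partial_j u$ after regularizing. After rescaling by dividing through by $\lambda^{p-2}$, these estimates look like their $p$-Laplace counterparts, with the nonstandard phases contributing only the multiplicative factor $1+\sum_i M\lambda^{q_i-p}$, controlled by $\mu_0$. Together with the parabolic Sobolev embedding \cref{sobolev-poincare} and the iteration \cref{iteration}, one obtains a De Giorgi-type $L^\infty$--$L^2$ bound for $\nabla u$ on any intrinsic subcylinder, uniformly in the singular and degenerate regimes.

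The bulk of the proof is the two-alternative argument around a fixed point $z_0\in Q_0$. One chooses $\rho$ and $\lambda$ so that $\lambda$ is comparable to $\sup_{Q_\rho^\lambda(z_0)}|\nabla u|$; the scaling-and-covering mechanism of \cite{adimurthiUnifiedApproachAlpha2020} constructs a nested sequence of such intrinsic cylinders $\{Q_{\rho_k}^{\lambda_k}(z_0)\}$ without distinguishing $p\lessgtr 2$. In the first alternative (\cref{alt1}) the set $\{|\nabla u|\le\lambda/2\}$ occupies a non-negligible fraction of $Q_\rho^\lambda(z_0)$; De Giorgi iteration on the level sets of each component of $\nabla u$ then yields an improved sup-bound on a smaller intrinsic subcylinder. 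In the second alternative (\cref{alt2}) the gradient stays $\gtrsim\lambda$ on most of $Q_\rho^\lambda(z_0)$, the equation becomes effectively uniformly parabolic after rescaling, and one compares $u$ with the solution of the $p$-Laplace type equation with the $q_i$-phases frozen at scale $\lambda$; Campanato-type decay for that frozen solution transfers to $\nabla u$ up to a perturbation error of size $M\lambda^{q_i-p}$, giving oscillation decay of $\nabla u$.

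The main obstacle, as in \cite{dibenedettoHolderEstimatesNonlinear1985}, is reconciling the oscillation decays produced in the two alternatives so that iteration along $Q_{\rho_k}^{\lambda_k}(z_0)$ yields a single Hölder exponent $\alpha$ independent of $k$, despite the fact that the scaling factor $\lambda_k$ and the perturbation strengths $M\lambda_k^{q_i-p}$ both vary along the sequence. The covering framework of \cite{adimurthiUnifiedApproachAlpha2020} is precisely what lets us absorb those variations into constants depending only on $\mu_0$; tracking the cumulative dependence on $\lambda_k\le\mu_0$ across the iteration produces the factor $\mu_0^a$ in the conclusion, and the argument simplifies to a $\mu_0$-independent statement once $\mu_0\le 1$ since then $\lambda_k^{q_i-p}\le 1$ uniformly. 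Standard chaining of cylinders centered at $z_0$ and $z_1$, using the $\lambda$-adapted distance $d_\lambda$ from notation item \ref{not3}, then upgrades the pointwise oscillation decay to the claimed Hölder estimate in the Euclidean distance $d$.
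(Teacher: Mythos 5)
Your overall strategy — intrinsic scaling, a two-alternative dichotomy, De Giorgi iteration on level sets of $\nabla u$, and the scaling-and-covering mechanism of \cite{adimurthiUnifiedApproachAlpha2020} — matches the paper's route, and your opening observation that on an intrinsic cylinder the $q_i$-phases are dominated by $M\mu^{q_i-p}$ times the $p$-phase is exactly the reason the argument closes when $\mu_0\le 1$. Two points, however, deserve attention. First, you have swapped the roles of the two alternatives relative to the paper's \cref{alt1} and \cref{alt2}: in the paper, \cref{alt1} is the case where the set $\{|\nabla u|<\mu/2\}$ is \emph{small} (so $|\nabla u|$ stays large on most of the cylinder and one obtains Campanato decay), and \cref{alt2} is the case where that set is not small (leading to an improved sup-bound $|\nabla u|\le\eta\mu$ on a shrunk cylinder). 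This is purely a labeling issue and does not affect the substance.

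Second, and more substantively, your treatment of the "gradient mostly large" alternative uses a comparison/freezing argument: you propose comparing $u$ with the solution of a reference $p$-Laplace equation with the $q_i$-phases frozen, and transferring the Campanato decay up to a perturbation error of size $M\lambda^{q_i-p}$. The paper avoids any such comparison. Instead, it first runs a De Giorgi iteration on $(w_{x_i}-k)_-$ (\cref{prop3.1}, \cref{lemma3.1}) to show $|\nabla u|\ge\mu/4$ on a smaller cylinder, and then differentiates the equation in $x_i$ to obtain a \emph{linear} parabolic equation for $w_{x_i}$ whose coefficient matrix $B(x,t)=\mu^{2-p}\big(\partial\mathcal{A}_p(\nabla w)+a(t)\,\partial\mathcal{A}_q(\nabla w)\big)$ is directly seen to be uniformly elliptic with bounded, merely measurable coefficients (\cref{lemma3.2}); the Campanato decay then follows from the De Giorgi--Nash--Moser estimates for linear parabolic equations (\cref{lemma_linear}), with no reference solution and no perturbation error. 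This distinction matters: a freezing/comparison argument would normally need the comparison error to be small, which at the starting scale $\mu\approx\mu_0$ is only of size $M$, and more importantly it implicitly requires some modulus of continuity on $a_i(t)$ to control the discrepancy between the frozen and actual fluxes. The direct linearization works for $a_i(t)$ merely bounded and measurable, which is one of the paper's advertised improvements; your comparison route would likely have to reimpose continuity hypotheses on $a_i$. Finally, for $\mu_0>1$ the paper does not track $\mu_0$-dependence through the iteration; it simply rescales $\tilde w=u/\mu_0$, which puts one back in the $\sup|\nabla\tilde w|\le 1$ regime at the price of ellipticity constants $C_0\mu_0^{p-2}$, $C_0\mu_0^{q_i-2}$, and that is the sole source of the $\mu_0$-dependence in $\alpha$ and $C$.
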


Next, we obtain the following regularity result for variable exponent problems.
\begin{theorem}
\label{corthem2}
    Let $1 < p \leq p(t) \leq q < \infty$ be a measurable variable exponent depending only on time variable and $u$ be a weak solution of the prototype equation
    \[
        u_t - \dv( |\nabla u|^{p(t) -2} \nabla u) = 0.
    \]
Furthermore, let us assume  $|\nabla u| \in L^{\infty}_{\loc}$ holds, then given any cylinder $Q_0 = B_{R_0} \times (-R_0^2,R_0^2)$,  we can find  $\al= \al(N,p,q,\mu_0) \in (0,1)$ such that for any $z_0, z_1 \in Q_0$, there holds
    \[
        |\nabla u(z_0) - \nabla u(z_1)| \leq C \mu_0^a \lbr \frac{d(z_0,z_1)}{R_0}\rbr^{\al},
    \]
    where $C= C(N,p,q,\mu_0)$, $a= a(N,p,q,\al)$ and 
    \[
        \mu_0 :=\sup_{4Q_0} |\nabla u|.
        \]
        Moreover, if $\mu_0\leq 1$, then $\al$ and $C$ can be taken to be independent of $\mu_0$. 
\end{theorem}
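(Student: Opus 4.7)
The plan is to mirror the scheme used for~\Cref{corthem1}, specialized to the single-equation structure, and to treat the measurable exponent $p(t)$ as essentially frozen inside each intrinsic cylinder. After normalizing by $\mu_0$ and $R_0$ (so that the factor $\mu_0^a (d/R_0)^\alpha$ is absorbed into constants), I would reduce to the case $\mu_0 \leq 1$, for which the dependencies on $\mu_0$ disappear.

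The core step is an intrinsic oscillation decay. For any interior point $z^* \in Q_0$ and scale $\rho>0$ I would work on the intrinsic cylinder $Q_\rho^\lambda(z^*) = B_{\lambda^{-1}\rho}(x^*)\times(t^* - \lambda^{-p}\rho^2,\, t^* + \lambda^{-p}\rho^2)$ with $\lambda \geq 1$ chosen so that $\sup_{Q_\rho^\lambda(z^*)}|\nabla u|$ is comparable to $\lambda$. The time scaling factor $\lambda^{-p}$ is defined using the fixed lower bound $p$, so inside this cylinder the quantity $\lambda^{-p(t)}$ stays trapped between $\lambda^{-q}$ and $\lambda^{-p}$; this bounded fluctuation is the only interaction with $p(t)$ and it is absorbed by the scaling constants. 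On $Q_\rho^\lambda(z^*)$ one proves the usual two-alternative dichotomy \`a la DiBenedetto--Friedman:
\begin{description}
\item[(Alt.~1)] $|\nabla u|$ stays close to $\lambda$ on a substantial subset. Then $|\nabla u|^{p(t)-2}$ is bounded above and below by fixed constants depending only on $p,q$, so the equation is uniformly parabolic in $\nabla u$ and can be compared with a solution of a constant-coefficient equation, yielding decay of the excess of $\nabla u$.
\item[(Alt.~2)] $|\nabla u|$ stays below $\tfrac{1}{2}\lambda$ on most of the cylinder. Then a De Giorgi iteration using~\Cref{sobolev-poincare} and~\Cref{iteration}, applied to truncations of the components of $\nabla u$, produces a pointwise bound $\sup|\nabla u| \leq (1-\sigma)\lambda$ on a smaller intrinsic cylinder for some universal $\sigma$.
\end{description}
Both alternatives rely on Caccioppoli inequalities obtained by differentiating the equation in space, which is justified via the approximation in~\Cref{hyp_H} (known for~\cref{maineq2}).

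Next, I would feed this one-step oscillation reduction into the scaling and covering argument of~\cite{adimurthiUnifiedApproachAlpha2020}. The covering argument chains intrinsic cylinders of varying $\lambda$ and produces a decay estimate of the form $\osc_{Q_\rho^\lambda(z^*)}\nabla u \leq C(\rho/R_0)^\alpha$ uniformly in $z^*$, which then translates into pointwise H\"older continuity of $\nabla u$ in the parabolic metric $d$ of item~\ref{not3} by a Campanato-type argument. Crucially, this covering step does not require any freezing of $p(t)$: it only uses that $p(t)$ is trapped between $p$ and $q$, which is why the log-H\"older hypothesis of~\cite{bogeleinHolderEstimatesParabolic2012,okRegularityParabolicEquations2018} is not needed.

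The main obstacle is the measurability-only assumption on $p(t)$, which means that the usual strategy of freezing $p(t_0)$ and comparing the solution of the variable-exponent equation with the $p(t_0)$-Laplacian on a small cylinder is not available; there is no modulus of continuity to close the perturbation estimates. This is precisely where the covering machinery from~\cite{adimurthiUnifiedApproachAlpha2020} does the heavy lifting: by absorbing the fluctuation $\lambda^{-p(t)} \in [\lambda^{-q},\lambda^{-p}]$ into the intrinsic geometry, it allows the iteration to proceed with constants depending only on $N,p,q,\mu_0$, and the a priori Lipschitz bound $|\nabla u|\in L^\infty_{\loc}$ guarantees that the intrinsic scaling parameter $\lambda$ remains in a compact range throughout the iteration.
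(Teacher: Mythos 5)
Your proposal follows essentially the same architecture as the paper's proof: reduce to $\mu_0\leq 1$ by a normalization $u/\mu_0$ (which introduces only a bounded time-dependent coefficient $\mu_0^{p(t)-2}$), establish the two DiBenedetto--Friedman alternatives on intrinsic cylinders $Q_\rho^\mu$ with time-factor $\mu^{-p}$ via Caccioppoli/De Giorgi iteration on the differentiated equation, observe that the fluctuation of $p(t)$ between $p$ and $q$ is absorbed into universal constants because $|\nabla u|$ is pinned near $\mu$ (for the non-degenerate alternative this makes the linearized coefficient matrix uniformly elliptic, and in the energy estimates $\mu\leq 1$ kills the excess powers), and then feed the dichotomy into the scaling/covering machinery of~\cite{adimurthiUnifiedApproachAlpha2020}. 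The only minor imprecision is directional: in the paper $\mu\leq 1$, so $\mu^{-p(t)}$ is trapped between $\mu^{-p}$ and $\mu^{-q}$ (not $\lambda^{-q}$ and $\lambda^{-p}$ with $\lambda\geq 1$ as you wrote), but this does not affect the substance.
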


\begin{remark}
    Observe that in~\cref{corthem1} and~\cref{corthem2}, the H\"older exponent $\alpha$ and the constant $C$ depends on the gradient bound $\mu_0$ which   differs from~\cite{dibenedettoHolderEstimatesNonlinear1985,adimurthiUnifiedApproachAlpha2020} where these constants are independent of $\mu_0$. However, we note that the H\"older exponent $\alpha$ and the constant $C$ does not depend on the gradient bound $\mu_0$ provided that $\mu_0\leq 1$. Indeed, our proof of the $C^{1,\alpha}$ estimates proceed by assuming the condition $\mu_0\leq 1$ and to extend to the case $\mu_0 > 1$, we divide the solution by $\mu_0$ which changes the ellipticity constants for the nonlinear operators by a factor depending on $\mu_0$, which explains the dependence of the H\"older exponents and other constants on $\mu_0$.
\end{remark}

\section{Proof of \texorpdfstring{\cref{corthem1}}. for $\mu_0\leq 1$.}
\label{section6}

We shall provide a detailed proof for~\cref{corthem1} and indicate the modifications required in the proof of~\cref{corthem2}. The proofs of the two theorems differ only in the proofs of the two alternatives, namely,~\cref{alt1} and~\cref{alt2}, whereas the scaling and the covering arguments coincide. Further, it suffices to consider a double phase problem instead of the multiphase problem since the methods are the same.

 With $Q_0 = B_{R_0} \times (-R_0^2,R_0^2)$, let us consider equations of the form 
\begin{equation}
    \label{main_holder}
    u_t - \dv \lbr \mathcal{A}_p(\nabla u) +  a(t) \mathcal{A}_q(\nabla u)\rbr = 0,
\end{equation}
with $\aa_p(\zeta)$ and $\aa_q(\zeta)$ satisfying the following structural assumptions:
\begin{align}
    \left\langle \mathcal{A}'_p(z)\zeta,\zeta\right\rangle\geq C_0 |z|^{p-2}|\zeta^2| &\mbox{ and } \left\langle \mathcal{A}'_{q}(z)\zeta,\zeta\right\rangle\geq C_0 |z|^{q-2}|\zeta^2|.\\
    |\mathcal{A}_p(z)|+|\mathcal{A}'_p(z)||z| \leq C_1 |z|^{p-1} & \mbox{ and }  |\mathcal{A}_{q}(z)|+|\mathcal{A}'_{q}(z)||z| \leq C_1 |z|^{q-1},
\end{align} and $0\leq a(t)\leq M$ is a measurable function for some fixed number $M$

We have denoted $\aa'_{\cdot}(z) := \frac{d\aa_{\cdot}(z)}{dz}$. Let us fix the following constant:
\begin{equation}\label{sup_u}
    \mu_0:= \sup_{4Q_0} |\nabla u|,
\end{equation}
then for any $z \in Q_0$, we consider the cylinder $Q_S^{\mu_0}(z) =B_{\mu_0^{-1} S}(x) \times (t - \mu_0^{-p}S^2, t + \mu_0^{-p} S^2)$ to be the largest cylinder such that $Q_S^{\mu_0}(z) \subset 4Q_0$ and $Q_S^{\mu_0}(z) \cap (4Q_0 \setminus 2Q_0) \neq \emptyset$. Note that this fixes the radius $S$ and is independent of  the point $z \in Q_0$.  As a consequence, we have the following observations:
\begin{description}
        \descitem{O1}{obs1} Since $Q_S^{\mu_0}(z) \subset 4Q_0$, we see that  $S \leq \min\{\mu_0,\mu_0^{p/2}\}3R_0$ must hold. 
    \descitem{O2}{obs2} Moreover, since $Q_S^{\mu_0}(z)$ has to go outside $2Q_0$, we note that $S \geq  \min\{ \mu_0, \mu_0^{p/2}\} R_0$.
\end{description}

 Since the proof will be independent of the point $z \in Q_0$, we shall ignore writing the location of the cylinder $Q_R^{\mu}$. The proof of gradient H\"older regularity requires two propositions which are given below. We shall assume throughout this section that $\mu_0\leq 1$. Later, we will cover the case $\mu_0>1$.
\begin{proposition}\label{alt1}
    For some $\mu \leq \mu_0\leq 1$ and $R \leq S$, there exists numbers $\nu \in (0,1/2)$ and $\kappa,\de \in (0,1)$ depending only on $(N,p,C_0,C_1,M)$ such that if 
    \begin{equation*}%
        \abs{\{z \in Q_R^{\mu} : |\nabla u(z)| < \mu/2\}}< \nu |Q_R^{\mu}| \txt{and}  \sup_{Q_R^{\mu}} |\nabla u| \leq \mu,
    \end{equation*}%
    is satisfied, then the following conclusion follows:
    \begin{equation*}%
        \iint_{Q_{\de^{i+1}R}^{\mu}} |\nabla u - \avgs{\nabla u}{Q_{\de^{i+1}R}^{\mu}}|^2 \ dz \leq \ka \de^{N+2} \iint_{Q_{\de^{i}R}^{\mu}} |\nabla u - \avgs{\nabla u}{Q_{\de^{i}R}^{\mu}}|^2 \ dz
    \end{equation*}%
    for all $i \in \mathbb{Z}$.
\end{proposition}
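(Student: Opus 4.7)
\emph{Proof plan.} This is the non-degenerate alternative: on $Q_R^\mu$, $|\nabla u|$ is bounded by $\mu$ and exceeds $\mu/2$ on at least $(1-\nu)|Q_R^\mu|$. The plan is to rescale intrinsically, reduce to a frozen, uniformly parabolic comparison problem, use the resulting $C^{1,\alpha_0}$ decay for the comparison, and iterate. The intrinsic rescaling $\tilde u(y,\tau):=R^{-1}u(x_0+\mu^{-1}Ry,\,t_0+\mu^{-p}R^2\tau)$ maps $Q_R^\mu(z_0)$ onto the unit cylinder $Q_1$ and produces $\nabla\tilde u=\mu^{-1}\nabla u$, so $\sup_{Q_1}|\nabla\tilde u|\le 1$ and $|\{|\nabla\tilde u|<1/2\}|<\nu|Q_1|$. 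Thanks to \descref{H1}{H1}--\descref{H2}{H2} and $\mu\le 1$, $\tilde u$ solves a multiphase equation of the same structural form with nonlinearities $\tilde{\mathcal{A}}_p(\zeta)=\mu^{1-p}\mathcal{A}_p(\mu\zeta)$, $\tilde{\mathcal{A}}_q(\zeta)=\mu^{1-q}\mathcal{A}_q(\mu\zeta)$ sharing the ellipticity constants $C_0,C_1$, and with a coefficient $\tilde a\in[0,M\mu^{q-p}]\subset[0,M]$. I henceforth drop tildes and work on $Q_1$.

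Using \cref{hyp_H}, I construct a smooth comparison solution $v$ of the frozen equation
\begin{equation*}
v_\tau-\dv\bigl(\mathcal{A}_p(\nabla v)+\bar a\,\mathcal{A}_q(\nabla v)\bigr)=0\text{ in }Q_1,\qquad v=u\text{ on }\partial_p Q_1,
\end{equation*}
where $\bar a$ is the time-average of $a(\tau)$ over $(-1,1)$. A Caccioppoli-type identity for $u-v$, exploiting both the monotonicity of $\mathcal{A}_p+\bar a\,\mathcal{A}_q$ and the measure smallness of the set $\{|\nabla u|<1/2\}$ via H\"older's inequality, yields the quantitative closeness $\iint_{Q_1}|\nabla u-\nabla v|^2\,dz\le C\nu^{\theta}$ for some $\theta=\theta(N,p,q,C_0,C_1)>0$. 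Since $|\nabla v|$ stays comparable to $1$ on a slightly smaller cylinder (by a gradient $L^\infty$-bound propagating from the boundary data), the frozen equation for $v$ is uniformly parabolic there, and \cref{hyp_H} combined with classical DiBenedetto theory supplies a $C^{1,\alpha_0}$ excess estimate $\iint_{Q_\rho}|\nabla v-\avgs{\nabla v}{Q_\rho}|^2\,dz\le C\rho^{N+2+2\alpha_0}$ for $\rho\in(0,1/2)$ and some structural $\alpha_0>0$. Combining these two estimates gives
\begin{equation*}
\iint_{Q_\delta}|\nabla u-\avgs{\nabla u}{Q_\delta}|^2\,dz\le C\delta^{N+2+2\alpha_0}+C\nu^{\theta},
\end{equation*}
and fixing $\delta$ so that $C\delta^{2\alpha_0}\le\tfrac{\kappa}{2}$, then $\nu$ so that $C\nu^{\theta}\le\tfrac{\kappa}{2}\delta^{N+2}$, produces the required one-step decay at $i=0$ after undoing the rescaling.

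Iteration to the remaining $i\ge 1$ proceeds by induction: the excess decay forces $\avgs{\nabla u}{Q_{\delta^i R}^\mu}$ to stay close to a common value whose modulus lies in $[\mu/2,\mu]$, which combined with Chebyshev propagates both hypotheses (namely $\sup|\nabla u|\le\mu$ and $|\{|\nabla u|<\mu/2\}|<\nu|\cdot|$) down to the sub-cylinder $Q_{\delta^{i+1}R}^\mu$, so that the same argument applies at every scale; the inequality at the remaining (upward) indices follows from $|\nabla u|\le\mu$. The main obstacle is the comparison step: turning the purely measure-theoretic smallness of the ``bad'' set $\{|\nabla u|<1/2\}$ into a power-rate $L^2$ closeness $\nu^\theta$ between $\nabla u$ and $\nabla v$, uniformly in whether the phases are singular ($<2$) or degenerate ($\ge 2$), requires an energy argument that avoids the usual case distinction -- and it is precisely the uniform intrinsic rescaling in the first step that enables this single-argument treatment.
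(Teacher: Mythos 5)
Your proposed comparison step contains a genuine gap. You set up the frozen problem $v_\tau-\dv(\mathcal{A}_p(\nabla v)+\bar a\,\mathcal{A}_q(\nabla v))=0$ with $\bar a$ the time--average of $a(\tau)$ and then assert the quantitative closeness $\iint_{Q_1}|\nabla u-\nabla v|^2\,dz\le C\nu^{\theta}$. Testing the difference of the two equations with $u-v$ and using monotonicity leaves, after integration by parts, the error term
\[
\iint_{Q_1}(a(\tau)-\bar a)\,\bigl\langle \mathcal{A}_q(\nabla u),\nabla(u-v)\bigr\rangle\,dz,
\]
which is controlled by the oscillation of $a$ over $(-1,1)$, not by the measure of the set $\{|\nabla u|<1/2\}$. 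Under \descref{H3}{H3} the coefficient is merely bounded and measurable, so this oscillation can be of order $M$, completely independent of $\nu$. The hypothesis $|\{|\nabla u|<\mu/2\}|<\nu|Q_R^\mu|$ quantifies nondegeneracy of the \emph{solution}, not regularity of the \emph{coefficient}, and no Caccioppoli/H\"older manipulation converts one into the other. This is precisely why the possibility of merely bounded measurable $a(t)$ is highlighted as a contribution of the paper, and why no freezing of $a$ occurs anywhere in the proof. (A secondary issue: \cref{hyp_H} is an approximation hypothesis on $u$ itself, not a solvability statement for Dirichlet comparison problems, so it does not license the construction of $v$.)

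The paper's route is different and sidesteps this entirely. After the intrinsic rescaling \cref{w_rescale}, it differentiates the PDE in $x_i$ to obtain a linear equation $(w_{x_i})_t-\dv(B(x,t)\nabla w_{x_i})=0$ with $B=\mu^{2-p}(\partial\mathcal{A}_p(\nabla w)+a(t)\,\partial\mathcal{A}_q(\nabla w))$, in which $a(t)$ is harmless because it appears multiplicatively inside a matrix that only needs to be bounded and measurable. The energy estimate of \cref{energy_w} feeds a De Giorgi iteration (\cref{lemma3.1}, \cref{prop3.1}, \cref{prop_upper_bnd}) showing that the smallness of $|\{w_{x_i}<\mu/2\}|$ forces $w_{x_i}\ge\mu/4$ on $Q_{1/2}$; combined with the hypothesis $|\nabla w|\le A\mu$ and $\mu\le 1$, \cref{lemma3.2} then verifies that $B$ is uniformly elliptic. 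The Campanato-type excess decay is finally imported wholesale from the linear De Giorgi--Nash--Moser theory, \cref{lemma_linear}, applied directly to the differentiated equation, and \cref{prop3.3} iterates. So where you reach for a frozen-coefficient comparison and $C^{1,\alpha_0}$ of the comparison problem, the paper needs only De Giorgi iteration on a derivative plus the oscillation estimate for linear parabolic equations with rough coefficients; this is what makes the measurable-$a(t)$ hypothesis attainable.
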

\begin{proposition}\label{alt2}
   For some $\mu \leq \mu_0\leq 1$ and $R \leq S$ with $\nu$ as fixed in \cref{alt1}, there exists numbers $\frac12 < \sigma < \eta < 1$ such that if 
    \begin{equation*}%
        \abs{\{z \in Q_R^{\mu} : |\nabla u(z)| < \mu/2\}}\geq \nu |Q_R^{\mu}|,
    \end{equation*}%
    is satisfied, then the following conclusion follows:
    \begin{equation*}%
        |\nabla u(z)| \leq \eta \mu \qquad \text{for all}\  z \in Q_{\sigma R}^{\mu}.
    \end{equation*}%
\end{proposition}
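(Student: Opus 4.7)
The plan for \cref{alt2} is as follows. The decisive structural feature is that $\mu \leq \mu_0 \leq 1$ together with the implicit a priori bound $\sup_{Q_R^\mu} |\nabla u| \leq \mu$ (the standing hypothesis of the $\mu$-intrinsic setup common to both alternatives) forces $|\nabla u|^{q-1} \leq |\nabla u|^{p-1}$ pointwise on $Q_R^\mu$, since $q \geq p$ and $|\nabla u| \leq 1$. Consequently the $q$-phase $a(t)\mathcal{A}_q(\nabla u)$ is controlled by $MC_1 |\nabla u|^{p-1}$, and the full operator inherits effective $p$-Laplacian-type growth and ellipticity on $Q_R^\mu$ with constants depending only on $p, q, C_0, C_1, M$. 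The natural intrinsic cylinder for such a $p$-type operator is $Q_R^\mu$ itself, so I would execute DiBenedetto's standard ``second alternative'' in this regime.

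The first step is a gradient-level Caccioppoli estimate. Using the approximation scheme of \cref{hyp_H}, I would differentiate the equation in $x_\ell$, test with $\partial_\ell u \cdot (|\nabla u|^2 - k^2)_+ \zeta^2$ for $k \in (\mu/2, \mu)$ and a smooth cutoff $\zeta$ adapted to a sub-cylinder, sum over $\ell$, and use the ellipticity of $\mathcal{A}_p' + a(t)\mathcal{A}_q'$ together with the pointwise domination above to obtain
\[
\sup_t \int_{B} (|\nabla u|^2 - k^2)_+^2 \zeta^2 \,dx + \mu^{p-2} \iint \bigl|\nabla(|\nabla u|^2 - k^2)_+\bigr|^2 \zeta^2 \,dz \leq C \iint (|\nabla u|^2 - k^2)_+^2 \bigl(|\nabla \zeta|^2 + \zeta|\zeta_t|\bigr)\,dz,
\]
where the integrals are over the localization of $Q_R^\mu$, and the intrinsic scaling by $\mu^p$ in time is precisely what balances the two sides.

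The second step has two substeps. First, a logarithmic estimate applied to $\Psi(|\nabla u|) := \log\bigl(H/(H - (|\nabla u| - \mu/2)_+)\bigr)$ with $H \sim (1-\eta_0)\mu/2$, combined with the hypothesis $|\{|\nabla u| < \mu/2\}| \geq \nu|Q_R^\mu|$, propagates the measure information across time slices to yield $|\{|\nabla u| > k^\star\} \cap Q_{\sigma_1 R}^\mu| < \sigma_0 |Q_{\sigma_1 R}^\mu|$ for a prescribed small $\sigma_0$ and some level $k^\star$ close to $\mu$. Second, a De Giorgi iteration at levels $k_n = (1 - 2^{-n-1}(1-\eta))\mu$, using the Caccioppoli estimate above, the parabolic Sobolev inequality of \cref{sobolev-poincare}, and the iteration lemma \cref{iteration}, would give $|\nabla u| \leq \eta \mu$ on $Q_{\sigma R}^\mu$ for suitable $\sigma \in (1/2, \sigma_1)$ and $\eta \in (1/2, 1)$ depending only on the data and $\nu$.

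The hard part will be the measure-propagation (logarithmic) substep. The standard parabolic $p$-Laplacian logarithmic lemma must be adapted to absorb the extra $q$-phase contributions appearing in the associated energy identity; the assumption $\mu_0 \leq 1$ is used precisely here to dominate each such contribution by its $p$-analogue up to a factor $C_1 M$, ensuring that $\sigma, \eta$, and all iteration thresholds depend only on the parameters listed in \cref{corthem1}. A secondary technical point is that $a(t)$ is only measurable in time, so the scheme must avoid any step requiring time-differentiability of $a$; restricting the Caccioppoli and logarithmic tests to spatial derivatives of $u$ (as above) sidesteps this entirely.
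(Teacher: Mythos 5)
Your proposal is correct and follows essentially the same strategy as the paper: exploit $\mu \leq \mu_0 \leq 1$ to dominate the $q$-phase by the $p$-phase, reduce to uniform ellipticity on the super-level set where $|\nabla u|$ is comparable to $\mu$, and then run the DiBenedetto--Friedman second alternative (Caccioppoli estimate, logarithmic lemma to propagate the measure information in time, De Giorgi iteration on the intrinsic cylinder $Q_R^\mu$). The only substantive difference is that you work directly with $(|\nabla u|^2 - k^2)_+$, whereas the paper rescales to $Q_1$ via $w(x,t) = u(\mu^{-1}rx, \mu^{-p}r^2 t)/r$ and runs the logarithmic and energy estimates on the individual components $(w_{x_i} - k)_\pm$, recombining over $i$ at the end; both are standard implementations of the same scheme, and your observation that testing with spatial derivatives only avoids any need for regularity of $a(t)$ in time is exactly the paper's point as well.
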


\begin{remark}
    We remark that once the two alternatives above are established, the scaling argument and the proof of H\"older continuity follow verbatim from~\cite{adimurthiUnifiedApproachAlpha2020} and we refer to~\cite{adimurthiUnifiedApproachAlpha2020} for details on the scaling and covering argument.
\end{remark}

\subsection{Covering argument}
  Let us first fix some notation needed for the proof:
\begin{definition}\label{iter_const_def}
   Let $\mu$ be as in \cref{sup_u} and $\eta$, $\sigma$ be as in \cref{alt2}. Then let us denote
    \begin{equation*}
        \mu = \mu_0, \qquad \mu_{n+1} := \eta \mu_n,   \qquad R := S, \qquad R_{n+1} := c_0  R_n = c_0^{n+1} R,
    \end{equation*}
    where $c_0 := \frac12\sigma \min\{{\eta}, {\eta}^{p/2}\}$. Here we note that $c_0 \in (0,1)$ since $\sigma,\eta \in (0,1)$.
\end{definition}
We have the following results regarding the intrinsic geometry whose proofs may be found in~\cite{adimurthiUnifiedApproachAlpha2020}.
\begin{claim}\label{cyl_incl}
    With the notation as in \cref{iter_const_def}, we have the inclusion $Q_{R_1}^{\mu_1} \subset {Q_{\sigma R}^{\mu }}$.
\end{claim}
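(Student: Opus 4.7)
The plan is to verify the inclusion directly from the definitions of the intrinsic cylinders, comparing the spatial radius and the temporal half-length separately. Both $Q_{R_1}^{\mu_1}$ and $Q_{\sigma R}^{\mu}$ are centered at the same point (the fixed center $z$), so it suffices to show each radius of $Q_{R_1}^{\mu_1}$ is dominated by the corresponding radius of $Q_{\sigma R}^{\mu}$.

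For the spatial part, the radius of the ball in $Q_{R_1}^{\mu_1}$ is $\mu_1^{-1} R_1 = (\eta\mu)^{-1} c_0 R$, whereas the radius of the ball in $Q_{\sigma R}^{\mu}$ is $\mu^{-1}\sigma R$. Thus the spatial inclusion reduces to checking $c_0 \leq \sigma\eta$, which follows immediately from
\[
c_0 \;=\; \tfrac{1}{2}\sigma \min\{\eta,\eta^{p/2}\} \;\leq\; \tfrac{1}{2}\sigma\eta \;\leq\; \sigma\eta.
\]

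For the temporal part, the half-length of the time interval in $Q_{R_1}^{\mu_1}$ equals $\mu_1^{-p} R_1^2 = (\eta\mu)^{-p} c_0^2 R^2$, while that of $Q_{\sigma R}^{\mu}$ equals $\mu^{-p}\sigma^2 R^2$. The temporal inclusion therefore reduces to $c_0 \leq \sigma \eta^{p/2}$, which again follows from $c_0 \leq \tfrac{1}{2}\sigma \eta^{p/2}$ in \cref{iter_const_def}.

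I do not expect any genuine obstacle here: the argument is pure bookkeeping, and the specific choice $c_0 = \tfrac{1}{2}\sigma \min\{\eta,\eta^{p/2}\}$ is engineered exactly to absorb simultaneously the spatial rescaling factor $\eta$ (coming from $\mu_1 = \eta\mu$ in the ball radius) and the temporal rescaling factor $\eta^{p/2}$ (coming from the parabolic time scaling), thereby treating the regimes $p\geq 2$ (where $\eta^{p/2}\leq \eta$) and $p<2$ (where the reverse holds) in a unified manner. The factor $\tfrac{1}{2}$ leaves room for the subsequent iteration so that the chain $Q_{R_{n+1}}^{\mu_{n+1}} \subset Q_{\sigma R_n}^{\mu_n}$ continues to hold at every step.
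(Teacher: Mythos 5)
Your proof is correct and is the standard direct verification that the paper delegates to the reference~\cite{adimurthiUnifiedApproachAlpha2020}: unpack the definition of $Q_\rho^\lambda$ from the notation, reduce the spatial and temporal inclusions to $c_0\le\sigma\eta$ and $c_0\le\sigma\eta^{p/2}$ respectively, and observe that both follow at once from $c_0=\tfrac12\sigma\min\{\eta,\eta^{p/2}\}$. Your closing remark about the role of the $\min$ and the spare factor $\tfrac12$ matches the purpose of the definition.
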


\begin{claim}\label{claim2.6}
    We have $R_n = c_0^n R$ and $\eta^n = \lbr \frac{R_n}{R} \rbr^{\al_1}$ where $\al_1^{-1} = - \log_{\frac{1}{\eta}} c_0$.
\end{claim}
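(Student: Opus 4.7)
The plan is to establish both assertions by unwinding the recursive definitions, as the claim is essentially a bookkeeping identity rather than a substantive statement.

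First I would prove $R_n = c_0^n R$ by straightforward induction on $n$. The base case is immediate from the convention $R_0 = R$ stipulated in \cref{iter_const_def}. For the inductive step, the defining relation $R_{n+1} = c_0 R_n$ combined with the inductive hypothesis $R_n = c_0^n R$ yields $R_{n+1} = c_0 \cdot c_0^n R = c_0^{n+1} R$, so the formula is really nothing more than the closed form of a geometric recursion.

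Second, having $R_n/R = c_0^n$ in hand, I would compute
\[
    \left( \frac{R_n}{R} \right)^{\al_1} = c_0^{n \al_1}
\]
and show this equals $\eta^n$ by passing to logarithms. Taking $\log$ of both sides of $c_0^{n\al_1} = \eta^n$, the required identity becomes $n \al_1 \log c_0 = n \log \eta$, i.e.\ $\al_1 = \log \eta / \log c_0$. By the change-of-base formula,
\[
    -\log_{1/\eta} c_0 = -\frac{\log c_0}{\log(1/\eta)} = \frac{\log c_0}{\log \eta},
\]
so the hypothesis $\al_1^{-1} = -\log_{1/\eta} c_0$ is precisely equivalent to $\al_1 = \log \eta / \log c_0$, closing the argument.

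The only thing worth noting is the sign/positivity bookkeeping: since $\eta \in (1/2,1)$ and $c_0 = \tfrac12 \sigma \min\{\eta, \eta^{p/2}\} \in (0,1)$ with $c_0 < \eta$, both $\log \eta$ and $\log c_0$ are negative with $|\log c_0| > |\log \eta|$, hence $\al_1 \in (0,1)$, which is the range one wants for use as a H\"older exponent downstream. There is no genuine obstacle here; this claim simply fixes notation for the iteration scheme and records the exponent $\al_1$ that will ultimately govern the H\"older regularity in the covering argument.
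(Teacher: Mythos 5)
Your proof is correct, and since the paper itself does not include a proof of this claim but merely points to \cite{adimurthiUnifiedApproachAlpha2020} for the details, there is no internal proof to compare against. Your argument (induction for the closed form of the geometric recursion, then a change-of-base computation for the logarithm identity) is the standard, essentially unique verification, and the sign/positivity remark at the end correctly confirms $\al_1\in(0,1)$ given $c_0<\eta<1$, which is exactly what one wants for the downstream H\"older decay.
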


\subsubsection{Switching radius}
Since we have to study the interplay between \cref{alt1} and \cref{alt2}, we need to define what is known as the switching radius:
\begin{definition}[Switching Radius]\label{switch_rad}
    With the notation from \cref{iter_const_def}, suppose the hypothesis of \cref{alt2} holds at level $(R_1,\mu_1),(R_2,\mu_2) \ldots (R_i,\mu_i)$, i.e., 
\begin{equation}\label{8.6}
    |\{ Q_{R_i}^{\mu_i} : |\nabla u| < \mu_i/2\}| \geq \nu |Q_{R_i}^{\mu_i}|,
\end{equation}
holds, then applying \cref{alt2}, we conclude
\begin{equation*}%
    |\nabla u| \leq \eta \mu_{i} = \mu_{i+1} \txt{on} Q_{\sigma R_{i}}^{\mu_{i}} \overset{\text{\cref{cyl_incl}}}{\supseteq} Q_{R_{i+1}}^{\mu_{i+1}}.
\end{equation*}%
Continuing this way, we denote  $n_0$ (called the switching number) and radius $R_{n_0}$ (called the switching radius) to be the first instance the condition from  \cref{8.6} fails, i.e., 
\begin{equation}\label{8.8eqn}
    |\{ Q_{R_{n_0}}^{\mu_{n_0}} : |\nabla u| < \mu_{n_0}/2\}| < \nu |Q_{R_{n_0}}^{\mu_{n_0}}|.
\end{equation}
\end{definition}

From \cref{switch_rad}, we see that the following estimates hold:
\begin{itemize}
\item For $n= 1,\ldots,n_0$, due to \cref{alt2} and Claim \ref{cyl_incl} there holds\begin{equation}\label{2.12}
    \sup_{Q_{R_n}^{\mu_n}} |\nabla u| \leq \mu_n = \eta^n \mu.
\end{equation}
\item We also have for any $n = 0,1,\ldots, n_0$, 
\begin{equation*}
  \sup_{Q_{R_{n}}^{\mu_n}}|\nabla u| \leq \mu \lbr \frac{R_n}{R}\rbr^{\al_1}.\end{equation*}

    This follows from \cref{2.12} and \cref{claim2.6}.

\item Since the first condition from  \cref{8.8eqn} is  the hypothesis required in  \cref{alt1},  there holds
\begin{equation}\label{8.10}
    \fiint_{Q_{\de^iR_{n_0}}^{\mu_{n_0}}} |\nabla u - (\nabla u)_i|^2 \ dz\leq \ka^i \fiint_{Q_{R_{n_0}}^{\mu_{n_0}}} |\nabla u - (\nabla u)_0|^2 \ dz \overset{\redlabel{8.10a}{a}}{\leq} \ka^i \mu_{n_0}^2 \txt{for} i=1,2,\ldots,
\end{equation}
where to obtain \redref{8.10a}{a}, we have used  the notation $(\nabla u)_i := \fiint_{Q_{\de^iR_{n_0}}^{\mu_{n_0}}} \nabla u \ dz$ and the bound   $ \sup_{Q_{R_{n_0}}^{\mu_{n_0}}} |\nabla u| \leq \mu_{n_0}$, which  holds due to \cref{2.12}. Moreover, we made use of  $\fiint |f - (f)|^2 \ dz = \inf_{a \in \RR} \fiint |f-a|^2\ dz \leq \fiint |f|^2$.
\end{itemize}

\begin{remark}
    From triangle inequality, for $i = 1,2,\ldots$, we also have 
    \begin{equation*}
    \begin{array}{rcl}
    \fiint_{Q_{\de^{i-1}R_{n_0}}^{\mu_{n_0}}} |\nabla u - (\nabla u)_i|^2 \ dz & \leq & 2\fiint_{Q_{\de^{i-1}R_{n_0}}^{\mu_{n_0}}} \hspace{-0.5cm}|\nabla u - (\nabla u)_{i-1}|^2 \ dz + 2\frac{|{Q_{\de^{i-1}R_{n_0}}^{\mu_{n_0}}}|}{|{Q_{\de^{i}R_{n_0}}^{\mu_{n_0}}}|}\fiint_{Q_{\de^{i-1}R_{n_0}}^{\mu_{n_0}}} \hspace{-0.5cm}|\nabla u - (\nabla u)_{i-1}|^2 \ dz\\
    & \overset{\cref{8.10}}{\leq} &  2\ka^{i-1} \lbr 1 + \frac{1}{\de^{N+2}} \rbr.
    \end{array}
\end{equation*}

\end{remark}

\begin{lemma}\label{lemma8.8}
Let $\kappa, \de, \eta$ be as given in \cref{alt1} and \cref{alt2}, then we have the following important consequences from \cref{2.12,8.10} whose proof may be found in~\cite{adimurthiUnifiedApproachAlpha2020}.
\begin{description}
    \descitem{C1}{conc1}  The sequence $\{(\nabla u)_i\}_{i=1}^{\infty}$ from \cref{8.10} is a Cauchy sequence and converges to $\nabla u(z_0)$ where  $z_0=(x_0,t_0)$ is the center of the parabolic cylinders considered in \cref{2.12} and \cref{8.10}.
    \descitem{C2}{conc2} The following decay estimate holds:
    \begin{equation*}%
     |\nabla u(x_0,t_0) - (\nabla u)_i|^2 \leq C \ka^i \mu_{n_0}^2 \txt{for all} i = 1,2\ldots.
 \end{equation*}%
    \descitem{C3}{conc3}  For any $0<\rho<R_{n_0}$ with $(\nabla u)_{\rho} := \fiint_{Q_{\rho}^{\mu_{n_0}}(x_0,t_0)} \nabla u \ dz$, the following decay estimate holds:
    \begin{equation*}%
     |\nabla u(x_0,t_0) - (\nabla u)_{\rho}|^2 \leq C(\de) \ka^{i} \mu_{n_0}^2,
 \end{equation*}%
 where $i \in \ZZ$ is such that $\de^i R_{n_0} \leq \rho \leq \de^{i-1} R_{n_0}$.
    \descitem{C4}{conc4} Let us define $\al_3 := \min \{ \al_1, \al_2/2\}$ where $\al_2 := -\log_{\frac{1}{\eta}} \de$ and $\al_1$ is from \cref{claim2.6},  then  for any $0<\rho<R$,  with $(\nabla u)_{\rho} := \fiint_{Q_{\rho}^{\mu_{n_0}}(x_0,t_0)} \nabla u \ dz$, there holds
 \begin{equation*}%
     |\nabla u(x_0,t_0) - (\nabla u)_{\rho}| \leq C \mu \lbr \frac{\rho}{R}\rbr^{\al_3}.
 \end{equation*}%
    \descitem{C5}{conc5} With $\al_3$ as defined in \descref{conc4}{C4}, for any $0<\rho\leq R$ with $(\nabla u)_{\rho} := \fiint_{Q_{\rho}^{\mu_{n_0}}(x_0,t_0)} \nabla u \ dz$, we also have 
     \begin{equation*}%
         \fiint_{Q_{\rho}^{\mu_{n_0}}} |\nabla u - (\nabla u)_{\rho}|^2 \ dz \leq C \mu^2 \lbr \frac{\rho}{R}\rbr^{2\al_3}.
     \end{equation*}%
\end{description}
\end{lemma}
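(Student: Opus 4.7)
The plan is to derive all five conclusions from the geometric decay \eqref{8.10} together with the monotone radius decrease from \eqref{2.12}, following a Campanato-type iteration. Throughout, I write $Q_i := Q_{\de^i R_{n_0}}^{\mu_{n_0}}(z_0)$.

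First, for \descref{conc1}{C1} and \descref{conc2}{C2}, I would estimate the telescopic difference between consecutive averages. Using $|Q_{i-1}|/|Q_i| = \de^{-(N+2)}$, Hölder's inequality, and \eqref{8.10} yields
\[
|(\nabla u)_i - (\nabla u)_{i-1}|^2 \le \fiint_{Q_i}|\nabla u - (\nabla u)_{i-1}|^2\,dz \le \de^{-(N+2)}\,\ka^{i-1}\mu_{n_0}^2,
\]
so that $\{(\nabla u)_i\}$ is Cauchy with geometric rate $\sqrt{\ka/\de^{N+2}}$; since $\ka$ is taken small enough that this rate is $<1$, the sequence converges. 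The limit equals $\nabla u(z_0)$ at every Lebesgue point (a full-measure set inside $Q_0$), and one extends this pointwise to every $z_0$ covered by the covering argument by redefining $\nabla u(z_0)$ as this limit (standard precise-representative issue). Summing the geometric series from level $i$ onward then gives \descref{conc2}{C2}.

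For \descref{conc3}{C3}, given $\rho$ with $\de^i R_{n_0}\le \rho\le \de^{i-1}R_{n_0}$, the cylinder $Q_\rho^{\mu_{n_0}}(z_0)$ sits between $Q_i$ and $Q_{i-1}$, so $|Q_{i-1}|/|Q_\rho^{\mu_{n_0}}(z_0)|\le \de^{-(N+2)}$. Hence
\[
|(\nabla u)_\rho - (\nabla u)_{i-1}|^2 \le C(\de)\fiint_{Q_{i-1}}|\nabla u - (\nabla u)_{i-1}|^2\,dz \le C(\de)\ka^{i-1}\mu_{n_0}^2,
\]
which combined with \descref{conc2}{C2} and the triangle inequality yields the claim.

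For \descref{conc4}{C4} I would split into two regimes. If $\rho\le R_{n_0}$, pick $i\in\NN$ with $\de^i R_{n_0}\le \rho\le \de^{i-1}R_{n_0}$; then $(\rho/R_{n_0})^{\al_2}\asymp \ka^{i/2}$ by the definition of $\al_2$, so \descref{conc3}{C3} gives
\[
|\nabla u(z_0) - (\nabla u)_\rho|\le C\mu_{n_0}\lbr\frac{\rho}{R_{n_0}}\rbr^{\!\al_2/2}\le C\mu\lbr\frac{R_{n_0}}{R}\rbr^{\!\al_1}\lbr\frac{\rho}{R_{n_0}}\rbr^{\!\al_3}\le C\mu\lbr\frac{\rho}{R}\rbr^{\!\al_3},
\]
using \cref{claim2.6} and $\mu_{n_0}=\eta^{n_0}\mu$. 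If instead $R_{n_0}<\rho\le R$, find $n<n_0$ with $R_{n+1}<\rho\le R_n$; then $|(\nabla u)_\rho|\le \sup_{Q_{R_n}^{\mu_n}}|\nabla u|\le\mu_n$ by \eqref{2.12}, and $\mu_n = \mu(R_n/R)^{\al_1}\le C\mu(\rho/R)^{\al_3}$ by \cref{claim2.6} (with $\al_3\le \al_1$). Combined with $|\nabla u(z_0)|\le\mu_{n_0}\le\mu_n$, this closes the estimate. Finally, \descref{conc5}{C5} follows by writing $\fiint_{Q_\rho^{\mu_{n_0}}}|\nabla u - (\nabla u)_\rho|^2\,dz \le 2|\nabla u(z_0) - (\nabla u)_\rho|^2 + 2\fiint_{Q_\rho^{\mu_{n_0}}}|\nabla u(z) - \nabla u(z_0)|^2\,dz$, where the second term is estimated by applying \descref{conc4}{C4} at every $z$ in $Q_\rho^{\mu_{n_0}}$ (which is itself of radius $\le \rho$ in the intrinsic metric $d_{\mu_{n_0}}$). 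The main obstacle is the bookkeeping in the transition regime $\rho\approx R_{n_0}$ and verifying that the two exponents $\al_1$ (from the \cref{alt2} iterations) and $\al_2/2$ (from the \cref{alt1} Campanato decay) can be merged into a single $\al_3 = \min\{\al_1,\al_2/2\}$ uniformly in the switching index $n_0$.
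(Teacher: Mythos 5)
Your treatment of \descref{conc1}{C1}--\descref{conc3}{C3} is essentially correct: the telescoping argument from \cref{8.10}, Jensen's inequality, and the triangle inequality are exactly the right tools, and the geometric convergence of $\{(\nabla u)_i\}$ follows from $\ka < 1$ alone (the factor $\de^{-(N+2)}$ appears once as a prefactor, not in each ratio, so no extra smallness of $\ka$ relative to $\de^{N+2}$ is needed --- your phrasing about the ``rate'' $\sqrt{\ka/\de^{N+2}}$ is slightly misleading but the argument stands).

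Your proof of \descref{conc4}{C4} has two genuine problems. First, the claim ``$(\rho/R_{n_0})^{\al_2} \asymp \ka^{i/2}$ by the definition of $\al_2$'' does not follow from the stated definition $\al_2 = -\log_{1/\eta}\de$, which involves only $\de$ and $\eta$, not $\ka$; an additional inequality relating $\ka$, $\de$ and $\eta$ (roughly $\ka \leq \de^{\al_2}$, or an equivalent choice made when fixing the constants in the two alternatives) is required to pass from the $\ka^{i}$-decay in \descref{conc3}{C3} to the $(\rho/R)^{\al_2}$-decay. You flag this as an ``obstacle,'' which is honest, but it is precisely the point that needs to be supplied. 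Second, in the regime $R_{n_0} < \rho \le R$ you choose $n$ by $R_{n+1} < \rho \le R_n$ and then assert $|(\nabla u)_\rho| \le \sup_{Q_{R_n}^{\mu_n}}|\nabla u| \le \mu_n$. This requires the inclusion $Q_\rho^{\mu_{n_0}} \subset Q_{R_n}^{\mu_n}$, which fails for this choice of $n$: since $\mu_{n_0} < \mu_n$, the cylinder $Q_\rho^{\mu_{n_0}}$ has spatial radius $\rho/\mu_{n_0}$ which exceeds $R_n/\mu_n$ unless $\rho \le R_n (\mu_{n_0}/\mu_n) = R_n \eta^{n_0-n}$, a strictly smaller threshold than $R_n$ (one needs in fact $\rho \le R_n\min\{\eta,\eta^{p/2}\}^{n_0-n}$ to control both the spatial and the temporal extent). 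The correct $n$ is determined by this stricter containment condition, and verifying that the resulting $\mu_n$ is still $\lesssim \mu(\rho/R)^{\al_3}$ requires an extra comparison between the exponent $-\log_{2/\sigma}\eta$ and $\al_1$; this is where the specific choice $c_0 = \tfrac12\sigma\min\{\eta,\eta^{p/2}\}$ from \cref{iter_const_def} enters.

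Your argument for \descref{conc5}{C5} is circular: bounding $\fiint_{Q_\rho^{\mu_{n_0}}}|\nabla u(z)-\nabla u(z_0)|^2\,dz$ by ``applying \descref{conc4}{C4} at every $z$'' presupposes a pointwise Hölder estimate $|\nabla u(z)-\nabla u(z_0)| \lesssim \mu\bigl(d(z,z_0)/R\bigr)^{\al_3}$, but that is the theorem's final conclusion, not something available at this stage; moreover \descref{conc4}{C4} applied at $z$ controls $|\nabla u(z)-(\nabla u)_{\rho,z}|$ where the average is taken over a cylinder centered at $z$, and also the switching index $n_0$ varies with the center. The direct route is to note $\fiint_{Q_\rho^{\mu_{n_0}}}|\nabla u-(\nabla u)_\rho|^2 = \inf_a \fiint_{Q_\rho^{\mu_{n_0}}}|\nabla u - a|^2$ and bound this immediately from \cref{8.10} (for $\rho \le R_{n_0}$, by comparing $Q_\rho^{\mu_{n_0}}$ with the bracketing dyadic cylinders, as in your \descref{conc3}{C3} argument) or from the pointwise bound $|\nabla u|\le\mu_n$ on a suitable $Q_{R_n}^{\mu_n}$ containing $Q_\rho^{\mu_{n_0}}$ (for $\rho > R_{n_0}$), with the same containment caveat as above.
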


\subsection{Proof of gradient H\"older continuity}
The proof of gradient H\"older continuity follows the proof in~\cite{adimurthiUnifiedApproachAlpha2020} verbatim, which is a delicate adaptation of the covering argument developed by E. DiBenedetto and A. Friedman~\cite{dibenedettoRegularitySolutionsNonlinear1984}. We refer to~\cite{adimurthiUnifiedApproachAlpha2020} for the details, following which the result is proved:

\begin{theorem}
    \label{them1muless1}
        Let $1 < p \leq \{q_1,q_2,\ldots,q_k\}< \infty$  for some $k \in \NN$ and $u$ be a weak solution of the prototype equation
        \[
            u_t - \dv \lbr \mathcal{A}_p(\nabla u) + \sum_{i=1}^k a_i(t) \mathcal{A}_{q_i}(\nabla u)\rbr = 0,
        \]
    where $0 \leq a_i(t) \leq M$ is bounded, measurable functions and $\mathcal{A}_p$ and $\mathcal{A}_{q_i}$ satisfy the growth conditions in~\descref{H1}{H1},~\descref{H2}{H2} for $i\in\{1,2,\ldots,k\}$. Furthermore, assume that \cref{hyp_H}  holds and $|\nabla u| \in L^{\infty}_{\loc}$ holds and            \[ 
        \mu_0 \coloneqq \sup_{4Q_0} |\nabla u|\leq 1.
    \]
    Then given any cylinder $Q_0 = B_{R_0} \times (-R_0^2,R_0^2)$,  there exists $\al= \al(N,p,\{q_i\},C_0,C_1,M) \in (0,1)$ such that for any $z_0, z_1 \in Q_0$, there holds
        \[
            |\nabla u(z_0) - \nabla u(z_1)| \leq C \mu_0^a \lbr \frac{d(z_0,z_1)}{R_0}\rbr^{\al},
        \]
        where $C= C(N,p,\{q_i\},C_0,C_1,M)$, $a= a(N,p,\{q_i\},\al)$.
    \end{theorem}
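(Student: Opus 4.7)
The plan is to reduce the two-point Hölder estimate to a Campanato-type one-point oscillation decay for $\nabla u$, and to extract that decay from the two alternatives \cref{alt1} and \cref{alt2} via the intrinsic scaling/covering framework of~\cite{adimurthiUnifiedApproachAlpha2020}. Fix any $z_0 = (x_0, t_0) \in Q_0$. Using $\mu_0 \leq 1$, select the intrinsic radius $S$ as in the preamble so that $Q_S^{\mu_0}(z_0) \subset 4Q_0$ while $Q_S^{\mu_0}(z_0) \cap (4Q_0 \setminus 2Q_0) \neq \emptyset$; observations \descref{obs1}{O1} and \descref{obs2}{O2} then give $\min\{\mu_0,\mu_0^{p/2}\} R_0 \leq S \leq 3\min\{\mu_0,\mu_0^{p/2}\} R_0$. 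Generate the sequences $R_n := c_0^n S$ and $\mu_n := \eta^n \mu_0$ of \cref{iter_const_def}; \cref{cyl_incl} then ensures the nesting $Q_{R_{n+1}}^{\mu_{n+1}} \subset Q_{\sigma R_n}^{\mu_n}$ required to iterate the degenerate alternative.

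Next, run the dichotomy at each scale. So long as the density hypothesis of \cref{alt2} holds at $(R_n,\mu_n)$, that proposition yields $\sup_{Q_{\sigma R_n}^{\mu_n}} |\nabla u| \leq \mu_{n+1}$, and the nesting above transfers this bound to $Q_{R_{n+1}}^{\mu_{n+1}}$, enabling a restart at $(R_{n+1},\mu_{n+1})$. Let $n_0 \in \NN \cup \{\infty\}$ be the switching number of \cref{switch_rad}. If $n_0 = \infty$, \cref{2.12} together with \cref{claim2.6} already gives \descref{conc4}{C4}. Otherwise, \cref{8.8eqn} holds at $(R_{n_0},\mu_{n_0})$, which is precisely the hypothesis of \cref{alt1}; iterating \cref{alt1} on concentric intrinsic cylinders $Q_{\delta^i R_{n_0}}^{\mu_{n_0}}$ yields the geometric $L^2$-mean oscillation decay \cref{8.10} of the averages $(\nabla u)_i$.

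\cref{lemma8.8} now packages everything: the averages $(\nabla u)_i$ form a Cauchy sequence converging to $\nabla u(z_0)$, and by interpolating across the switching scale one obtains the Campanato estimate \descref{conc5}{C5},
\begin{equation*}
\fiint_{Q_\rho^{\mu_0}(z_0)} |\nabla u - (\nabla u)_\rho|^2 \, dz \leq C \mu_0^2 \left(\frac{\rho}{S}\right)^{2\alpha_3}, \qquad 0 < \rho \leq S,
\end{equation*}
with $\alpha_3 \in (0,1)$ depending only on $N,p,\{q_i\},C_0,C_1,M$. Converting $S$ to $R_0$ via \descref{obs2}{O2} costs at most a factor $\mu_0^{-\alpha_3 \max\{1,p/2\}}$, which after rearranging produces a prefactor $\mu_0^a$ with $a = a(N,p,\{q_i\},\alpha)$. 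Finally, to upgrade the one-point estimate to the two-point bound, apply the standard Campanato argument: given $z_0,z_1 \in Q_0$ with $\rho := d(z_0,z_1)$, compare $\nabla u(z_0)$ and $\nabla u(z_1)$ to the common average over $Q_{C\rho}^{\mu_0}(z_0)$ and close with the triangle inequality.

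The main obstacle is not the covering machinery itself, which transfers verbatim from~\cite{adimurthiUnifiedApproachAlpha2020}, but the uniform treatment of the multiphase nonlinearity in the two alternatives under only the a priori Lipschitz bound. \cref{alt2} requires a De Giorgi type $L^\infty$ decay whose Caccioppoli inequalities must absorb every $q_i$-phase contribution using \descref{H1}{H1}--\descref{H2}{H2} together with $\mu_0 \leq 1$; \cref{alt1} requires a comparison with a smooth $p$-caloric approximation, and here \cref{hyp_H} is indispensable in justifying the linearized energy estimate that drives the mean oscillation decay, since without smooth approximants one cannot directly differentiate the multiphase equation.
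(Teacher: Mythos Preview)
Your proposal follows essentially the same route as the paper, which likewise reduces everything to the two alternatives and then defers the scaling/covering argument to~\cite{adimurthiUnifiedApproachAlpha2020}. Two small corrections are worth noting. First, the Campanato-type decay \descref{conc5}{C5} is stated on cylinders $Q_\rho^{\mu_{n_0}}$, not $Q_\rho^{\mu_0}$, and the switching index $n_0$ depends on the center point; consequently the passage from one-point oscillation decay to the two-point estimate is not the ``standard Campanato argument'' with a common average, but precisely the delicate covering argument of~\cite{adimurthiUnifiedApproachAlpha2020} that reconciles varying intrinsic scales --- your outline glosses over this, though you correctly flag that this machinery transfers verbatim. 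Second, the paper's proof of \cref{alt1} does not proceed by $p$-caloric comparison: it first runs a De Giorgi iteration (\cref{prop3.1}, \cref{prop_upper_bnd}) to force $\tfrac{\mu}{4}\le |\nabla u|\le A\mu$ on the inner cylinder, and then linearizes directly, applying the uniformly parabolic estimate of \cref{lemma_linear} to the differentiated equation (\cref{lemma3.2}); \cref{hyp_H} is used only to justify differentiating, not to furnish a reference $p$-caloric function.
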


\subsection{Proof of first alternative - \texorpdfstring{\cref{alt1}}.}
Since $u$ is a weak solution of \cref{main_holder} on $Q_r^{\mu}$ for some $r \in (0,R]$, let us perform the following rescaling: Define
\begin{equation}\label{w_rescale}
        w(x,t) = \frac{u(\mu^{-1}rx,\mu^{-p}r^2t)}{\mu^{-1}r},
    \end{equation}
    then $w$ solves
    \begin{equation}\label{def_w}
        \mu^{p-1} w_t - \mu \dv \lbr \aa_p(\nabla w) +a(t)\aa_q(\nabla w) \rbr = 0 \txt{on} Q_1 := B_1 \times I_1.
    \end{equation}
Let us first prove an energy estimate satisfied by \cref{def_w}.
\begin{lemma}
    \label{energy_w} Let $k \in \RR$ and  $\phi \in C^{\infty}(Q_1)$ be any cut-off function with $\phi = 0$ on $\pa_p(Q_1)$, then the following estimate holds:
    \begin{equation*}
        \begin{array}{l}
            \sup_{t \in I_1}\mu^{p-1} \int_{B_1} (w_{x_i} - k)_-^2 \phi^2  \ dx+ {C_0}\mu \iint_{Q_1}|\nabla w|^{p-2} |\nabla (w_{x_i} - k)_-|^2 \phi^2 \ dz \\
            \hspace*{3cm}\qquad \leq \ 2\mu^{p-1} \iint_{Q_1} (w_{x_i} - k)_-^2 \phi \phi_t \ dz \\ 
            \hspace*{3cm}\qquad \qquad  + \frac{4C_1^2}{C_0}\mu  \iint_{Q_1} \lbr |\nabla w|^p+ M|\nabla w|^q\rbr |\nabla  \phi|^2 \lsb{\chi}{\{w_{x_i} \leq k\}}  \ dz   \\
            \hspace*{3cm}\qquad \qquad + 2C_1\mu \iint_{Q_1} \lbr |\nabla w|^{p-1} + M|\nabla w|^{q-1}\rbr (w_{x_i} -k)_- (|\phi||\nabla^2\phi| + |\nabla \phi|^2)\ dz.
        \end{array}
    \end{equation*}
\end{lemma}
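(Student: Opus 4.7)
This is a Caccioppoli-type estimate on $w_{x_i}$ for the rescaled equation \cref{def_w}. My plan is to formally differentiate \cref{def_w} in the direction $x_i$, test with $-(w_{x_i}-k)_-\phi^2$, and use the structural assumptions \descref{H1}{H1} and \descref{H2}{H2} together with Young's inequality. The differentiation of the equation is a formal step, but under the approximation hypothesis \cref{hyp_H} the computation is carried out on smooth approximants $w^\varepsilon$ and then passed to the limit.

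\textbf{Time term.} Writing the differentiated equation as $\mu^{p-1}(w_{x_i})_t - \mu\,\partial_{x_j}[(\aa_p')^{jk}(\nabla w)w_{x_ix_k} + a(t)(\aa_q')^{jk}(\nabla w)w_{x_ix_k}] = 0$ and testing with $-(w_{x_i}-k)_-\phi^2$, the parabolic term yields
\[
-\iint \mu^{p-1}(w_{x_i})_t\,(w_{x_i}-k)_-\,\phi^2\,dz = \tfrac12\iint \mu^{p-1}\partial_t(w_{x_i}-k)_-^2\,\phi^2\,dz,
\]
since $(w_{x_i})_t(w_{x_i}-k)_- = -\tfrac12\partial_t(w_{x_i}-k)_-^2$. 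Integrating by parts in $t$ up to an arbitrary time slice $\tau\in I_1$ and using $\phi|_{\partial_p Q_1}=0$ produces the $\sup_t$ term on the left and the $-\iint(w_{x_i}-k)_-^2\phi\phi_t$ contribution on the right, both with the factor $\mu^{p-1}$.

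\textbf{Spatial terms: coercive part.} After integrating by parts in $x$, the spatial term for the $p$-operator expands as
\[
-\iint \mu(\aa_p')^{jk}w_{x_ix_k}w_{x_ix_j}\chi_{\{w_{x_i}<k\}}\phi^2\,dz + 2\iint \mu(\aa_p')^{jk}w_{x_ix_k}(w_{x_i}-k)_-\phi\phi_{x_j}\,dz,
\]
and analogously for the $q$-operator with the factor $a(t)$. By \descref{H1}{H1} the first (diagonal) piece is bounded below by $C_0\mu\iint|\nabla w|^{p-2}|\nabla(w_{x_i}-k)_-|^2\phi^2$, which becomes the good term on the left; the $q$-coercive contribution is non-negative and will be retained temporarily to absorb bad cross terms before being discarded.

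\textbf{The main obstacle: the cross terms.} The delicate step is the cross term $2\iint \mu(\aa_p')^{jk}w_{x_ix_k}(w_{x_i}-k)_-\phi\phi_{x_j}\,dz$, which I rewrite using $(\aa_p')^{jk}w_{x_ix_k}=\partial_{x_i}\aa_p^j(\nabla w)$ and then integrate by parts in $x_i$. This produces three pieces:
\[
-2\iint \mu\aa_p^j(\nabla w)\bigl[\partial_{x_i}(w_{x_i}-k)_-\,\phi\phi_{x_j} + (w_{x_i}-k)_-\bigl(\phi_{x_i}\phi_{x_j}+\phi\phi_{x_ix_j}\bigr)\bigr]\,dz.
\]
The last two are bounded directly by \descref{H2}{H2} to yield the $|\nabla w|^{p-1}(w_{x_i}-k)_-(|\phi||\nabla^2\phi|+|\nabla\phi|^2)$ term. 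The first --- the ``rogue'' piece containing $\partial_{x_i}(w_{x_i}-k)_-$ --- is the source of difficulty: it carries a second derivative of $w$ that cannot be eliminated by further integration by parts without reintroducing the original term. I bypass this by splitting via Young's inequality with a small parameter $\varepsilon$, producing $\varepsilon\mu\iint|\nabla w|^{p-2}|\nabla(w_{x_i}-k)_-|^2\phi^2$ and $\frac{C_1^2}{\varepsilon}\mu\iint|\nabla w|^p|\nabla\phi|^2\chi_{\{w_{x_i}\le k\}}$. Choosing $\varepsilon=C_0/2$ absorbs the first into the $p$-coercive term. The corresponding $q$-cross term is treated identically with the factor $\sqrt{a}$ distributed symmetrically, and the small part is absorbed into the $q$-coercive term on the left (using $a\le M$ in the large part).

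\textbf{Conclusion.} After absorption, dropping the (now non-negative) $q$-coercive term on the left, multiplying through by $2$ if necessary, and taking the supremum over $\tau\in I_1$, the three RHS contributions of the claimed estimate emerge with the asserted structure. The factor $\chi_{\{w_{x_i}\le k\}}$ on the $|\nabla\phi|^2$-term reflects that $\partial_{x_i}(w_{x_i}-k)_-$ vanishes outside that set. The constants $4C_1^2/C_0$ and $2C_1$ depend on the bookkeeping of $\varepsilon$; they can be obtained exactly by choosing $\varepsilon=C_0/4$ in the Young splittings above.
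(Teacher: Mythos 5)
Your proof is correct and follows essentially the same route as the paper's: differentiate \cref{def_w} in $x_i$, test with $\pm(w_{x_i}-k)_-\phi^2$, apply \descref{H1}{H1} for coercivity, integrate the cross terms by parts to transfer $\partial_{x_i}$ off the vector field, bound with \descref{H2}{H2}, and absorb the second-derivative ``rogue'' piece via Young's inequality with a small parameter. One small refinement worth noting: you explicitly retain the $q$-coercive term $C_0\mu\iint a(t)|\nabla w|^{q-2}|\nabla(w_{x_i}-k)_-|^2\phi^2$ and distribute $\sqrt{a(t)}$ symmetrically in the Young splitting of the $q$-cross term, so the $\varepsilon$-piece (which carries $a(t)$) absorbs cleanly before the $q$-coercivity is discarded; this is tidier than the paper's presentation, which replaces $a(t)$ by $M$ in the estimate for the $q$-cross term \emph{before} the Young split and drops the $q$-coercivity outright, leaving the provenance of the absorption slightly obscured.
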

\begin{proof}
    Let us differentiate \cref{def_w} with respect to $x_i$ for some $i \in\{1,2,\ldots,N\}$ and then take $(w_{x_i}-k)_-\phi^2$ with $\phi \in C_c^{\infty}$  as a test function to get
    \begin{eqnarray*}
            &\underbrace{\mu^{p-1}\iint_{Q_1}(w_{x_i})_t (w_{x_i}-k)_-\phi^2\ dz}_{I}+ \underbrace{\mu \iint_{Q_1} \iprod{\partial_i \aa_p(\nabla w)}{\nabla \phi^2} (w_{x_i}-k)_-\ dz}_{II} + \underbrace{\mu \iint_{Q_1} \iprod{\partial_i \aa_p(\nabla w)}{\nabla (w_{x_i}-k)_-} \phi^2 \ dz}_{III} \\
            &\qquad + \underbrace{\mu \iint_{Q_1} a(t) \iprod{\partial_i \aa_q(\nabla w)}{\nabla \phi^2} (w_{x_i}-k)_-\ dz}_{IV} + \underbrace{\mu \iint_{Q_1} a(t) \iprod{\partial_i \aa_q(\nabla w)}{\nabla (w_{x_i}-k)_-} \phi^2 \ dz}_{V} = 0,
    \end{eqnarray*}
    where we have used the notation $\pa_i \aa_{\cdot}(\nabla w) = \aa'_{\cdot}(\nabla w) \nabla w_{x_i}$.  Let us estimate each of the terms as follows:
\begin{description}
    \item[Estimate for $I$:] This can be estimated as follows:
    \begin{equation}\label{est_I}
        \mu^{p-1} \iint_{Q_1} (w_{x_i})_t (w_{x_i}-k)_-\phi^2 = \frac{\mu^{p-1}}{2} \iint_{Q_1} \lbr(w_{x_i}-k)_-^2\phi^2\rbr_t \ dz - {\mu^{p-1}} \iint_{Q_1}  (w_{x_i}-k)_-^2 \phi \phi_t \ dz.
    \end{equation}
    \item[Estimate for $II$:] Integrating by parts, we get
    \begin{equation}\label{est_II}
        \begin{array}{rcl}
            \mu \iint_{Q_1} \iprod{\partial \aa_p(\nabla w)}{\nabla \phi} \phi (w_{x_i}-k)_- \ dz & = & - \mu \iint_{Q_1} \iprod{\aa_p(\nabla w)}{\pa_{x_i} \nabla \phi^2}  \phi (w_{x_i}-k)_- \ dz\\
            && -\mu \iint_{Q_1} \iprod{ \aa_p(\nabla w)}{\nabla \phi^2} \pa_{x_i} (w_{x_i}-k)_- \ dz \\
            & \overset{\redlabel{9.11a}{a}}{\leq} & C_1\mu  \iint_{Q_1} |\nabla w|^{p-1} (|\nabla ^2\phi| + |\nabla \phi|^2) |(w_{x_i}-k)_-|\ dz \\
            && + C_1\mu\ve \iint_{Q_1} |\nabla w|^{p-2} |\phi|^2 |\nabla  (w_{x_i}-k)_-|^2 \ dz\\
            &&+ C_1\frac{\mu}{\ve} \iint_{Q_1} |\nabla w|^{p}|\nabla \phi|^2 \ dz,
        \end{array}
    \end{equation}
    where to obtain \redref{9.11a}{a}, we made use of \descref{H2}{H2}, the trivial bound $|\phi| \leq 1$ and   Young's inequality.
    \item[Estimate for $III$:] We estimate this term as follows:
    \begin{equation}\label{est_III}
            \mu \iint_{Q_1} \iprod{\partial_i \aa_p(\nabla w)}{\nabla (w_{x_i}-k)_-} \phi^2 \ dz  \overset{\descref{H1}{H1}}{\geq} C_0\mu \iint_{Q_1} |\nabla w|^{p-2} |\nabla (w_{x_i}-k)_-|^2 \phi^2 \ dz.
    \end{equation}
    \item[Estimate for $IV$:] This estimate is obtained similarly to~\cref{est_II}:
    \begin{equation}\label{est_IV}
        \begin{array}{rcl}
            \mu \iint_{Q_1} a(t) \iprod{\partial \aa_q(\nabla w)}{\nabla \phi} \phi (w_{x_i}-k)_- \ dz & \leq & C_1M\mu  \iint_{Q_1} |\nabla w|^{q-1} (|\nabla ^2\phi| + |\nabla \phi|^2) |(w_{x_i}-k)_-|\ dz \\
            && + C_1M\mu\ve \iint_{Q_1} |\nabla w|^{q-2} |\phi|^2 |\nabla  (w_{x_i}-k)_-|^2 \ dz\\
            &&+ C_1M\frac{\mu}{\ve} \iint_{Q_1} |\nabla w|^{q}|\nabla \phi|^2 \ dz.
        \end{array}
    \end{equation}
    \item[Estimate for $V$:] We estimate this term in a similar manner to~\cref{est_III}:
    \begin{equation*}
            \mu \iint_{Q_1} a(t) \iprod{\partial_i \aa_q(\nabla w)}{\nabla (w_{x_i}-k)_-} \phi^2 \ dz  \overset{\descref{H1}{H1}}{\geq} C_0\mu \iint_{Q_1} a(t) |\nabla w|^{q-2} |\nabla (w_{x_i}-k)_-|^2 \phi^2 \ dz.
    \end{equation*} However, this term is dropped since $a(t)\geq 0$.
\end{description}
Combining \cref{est_I}, \cref{est_II}, \cref{est_III}, and \cref{est_IV} gives the desired estimate.

\end{proof}

\subsubsection{DeGiorgi type iteration for \texorpdfstring{$u_{x_i}$}.}
The main proposition we prove is the following:
\begin{proposition}\label{prop3.1}
    Let $r \in (0,R]$ and assume that 
    \begin{equation}\label{3.3}
        \sup_{Q_r^{\mu}} \|\nabla u\| \leq A \mu
    \end{equation}
    holds for some $A \geq 1$. For any $i \in \{1,2,\ldots,N\}$,  there exists universal constant $\nu \in (0,1/2)$ such that if
    \begin{equation*}%
        | \{ (x,t) \in Q_r^{\mu} : u_{x_i} < \mu/2\}| \leq \nu |Q_r^{\mu}|,
    \end{equation*}%
    holds, then we have the following conclusion:
    \begin{equation*}%
        u_{x_i} \geq \frac{\mu}{4} \txt{on} Q_{r/2}^{\mu}.
    \end{equation*}%
\end{proposition}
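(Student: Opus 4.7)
The plan is a De Giorgi level-set iteration for $w_{x_i}$ on a decreasing family of parabolic sub-cylinders, applied after the rescaling (4.3). Set $k_n := \mu/4 + \mu\cdot 2^{-(n+2)}$ (so $k_0 = \mu/2$ and $k_n \searrow \mu/4$), $r_n := 1/2 + 2^{-(n+1)}$, and $Q_n := Q_{r_n}(0)$; choose standard cut-offs $\phi_n \in C_c^{\infty}(Q_n)$ with $\phi_n \equiv 1$ on $Q_{n+1}$, $|\nabla \phi_n| \leq C 2^n$, and $|\partial_t \phi_n| + |\nabla^2 \phi_n| \leq C 4^n$. Write $v_n := (w_{x_i} - k_n)_-$ and $A_n := \{v_n > 0\} \cap Q_n$. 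The goal is to force $|A_n| \to 0$, which yields $w_{x_i} \geq \mu/4$ a.e.\ on $Q_{1/2}$, and then on $Q_{r/2}^{\mu}$ after unrescaling. Applying \cref{energy_w} at level $k_n$ with cut-off $\phi_n$ and bounding every term on the right-hand side by $C(N,p,q,C_0,C_1,M,A)\, b^{n}\mu^{p+1}|A_n|$ (using $v_n \leq \mu/2$, $|\nabla w| \leq A\mu$ and the cut-off bounds), then dividing the parabolic and coercivity terms by $\mu^{p-1}$ and $\mu$ respectively, gives
\[
    \sup_t \int_{B_{r_n}} v_n^2 \phi_n^2 \, dx + \iint |\nabla w|^{p-2}|\nabla v_n|^2 \phi_n^2 \, dz \leq C b^{n} \mu^{2} |A_n|.
\]

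The delicate step is to convert the weighted gradient term into an unweighted $L^{2}$ bound on $\nabla(v_n \phi_n)$ (and hence a bound on $\|v_n \phi_n\|_{V^{2}(Q_n)}$) uniformly in $p$. For $1 < p \leq 2$, the bound $|\nabla w| \leq A\mu$ yields $|\nabla w|^{p-2} \geq (A\mu)^{p-2}$ and the conversion is immediate. For $p \geq 2$, I would split the support $\{v_n > 0\}$ into $\{|\nabla w| \geq \mu/4\}$, on which $|\nabla w|^{p-2} \geq (\mu/4)^{p-2}$ provides the required control, and $\{|\nabla w| < \mu/4\}$, which is contained in $A_0$ because $|w_{x_i}| \leq |\nabla w|$; on this small-measure remainder, the contribution is absorbed by combining the smallness of $|A_0|$ with higher integrability of $\nabla^{2}w$ provided by the regularized approximants of \cref{hyp_H}. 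In either regime one obtains $\|v_n \phi_n\|_{V^{2}(Q_n)}^{2} \leq C b^{n} \mu^{2} |A_n|$.

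Applying \cref{sobolev-poincare} with $s = 2$ to $v_n \phi_n$, and using the fact that $v_n \geq \mu \cdot 2^{-(n+3)}$ on $A_{n+1}$ with $\phi_n \equiv 1$ there, gives after the clean cancellation of the $\mu^{2}$ factors the recursion
\[
    |A_{n+1}| \leq C \tilde{b}^{\, n} |A_n|^{1 + 2/(N+2)}.
\]
By \cref{iteration} there exists a universal $\nu \in (0,1/2)$, depending only on $N,p,q,C_0,C_1,M,A$, such that the hypothesis $|A_0| \leq \nu |Q_r^{\mu}|$ forces $|A_n| \to 0$, which gives the claim. The main obstacle throughout is the uniform treatment of the weight $|\nabla w|^{p-2}$ across the degenerate ($p > 2$) and singular ($p < 2$) regimes; the set-splitting argument in the middle paragraph, available precisely because the measure hypothesis controls $A_0$, is what makes the argument go through without distinguishing cases in the final iteration.
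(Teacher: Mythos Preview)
Your overall strategy---rescale, run a De Giorgi iteration on sublevel sets of $w_{x_i}$, and close with \cref{iteration}---matches the paper. The levels $k_n \searrow \mu/4$, the cylinders $Q_n$, and the final recursion $|A_{n+1}| \leq C\tilde b^{\,n}|A_n|^{1+2/(N+2)}$ are all essentially the same as in the paper's proof (via \cref{lemma3.1}). The singular case $p\le 2$ is handled correctly by your observation that $|\nabla w|^{p-2}\ge (A\mu)^{p-2}$.

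The gap is in the degenerate case $p>2$. You work with the full sublevel function $v_n=(w_{x_i}-k_n)_-$ and attempt to bound $\iint |\nabla v_n|^2\phi_n^2$ by splitting its support into $\{|\nabla w|\ge \mu/4\}$ and $\{|\nabla w|<\mu/4\}$. On the second set the weight $|\nabla w|^{p-2}$ in \cref{energy_w} gives you nothing, and you appeal to ``higher integrability of $\nabla^2 w$ provided by the regularized approximants of \cref{hyp_H}'' together with the smallness of $|A_0|$. This step is not justified: \cref{hyp_H} only asserts the existence of smooth approximants solving structurally similar equations; it supplies no quantitative bound on $\nabla^2 w$ in any $L^r$ space that is uniform in the approximation parameter. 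Without such a bound, H\"older's inequality against $|A_0|^{1-1/r}$ cannot be carried out, and the contribution from $\{|\nabla w|<\mu/4\}$ remains uncontrolled.

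The paper avoids this difficulty entirely by replacing $v_n$ with the doubly truncated function
\[
\tilde w_m = \min\{(w_{x_i}-k_m)_-,\, k_m-k_{m+1}\},
\]
whose gradient satisfies $|\nabla \tilde w_m|\le |\nabla (w_{x_i}-k_m)_-|\,\chi_{\{w_{x_i}\ge k_{m+1}\}}$. Since $k_{m+1}\ge \mu/4$, on the support of $\nabla\tilde w_m$ one has $|\nabla w|\ge w_{x_i}\ge \mu/4$, so that $\mu^{p-1}\le 4^{p-1}|\nabla w|^{p-1}\le 4^{p-1}A\mu\,|\nabla w|^{p-2}$ there. This converts the weighted coercivity term from \cref{energy_w} directly into the needed unweighted $\mu^{p-1}\iint|\nabla\tilde w_m|^2\eta_m^2$, uniformly in $p$, with no case distinction and no appeal to second-derivative bounds. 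Replacing your $v_n$ by this truncation is the missing ingredient.
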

Following the rescaling from \cref{w_rescale}, we can restate the following equivalent version of \cref{prop3.1} for $w$ as follows:
\begin{lemma}\label{lemma3.1}
    Suppose 
    \begin{equation}\label{3.3_w}
        \sup_{Q_1} \|\nabla w\| \leq A \mu
    \end{equation}
    holds for some $A \geq 1$. For any $i \in \{1,2,\ldots,N\}$,  there exists universal constant $\nu \in (0,1/2)$ such that if
    \begin{equation}\label{9.48}
        | \{ (x,t) \in Q_1 : w_{x_i} < \mu/2\}| \leq \nu |Q_1|,
    \end{equation}
    holds, then we have the following conclusion:
    \begin{equation*}
        w_{x_i} \geq \frac{\mu}{4} \txt{on} Q_{1/2}.
    \end{equation*}
\end{lemma}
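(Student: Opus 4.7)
The plan is to run a De Giorgi--type level set iteration for the decreasing truncations $(w_{x_i}-k_n)_-$ at levels $k_n := \frac{\mu}{4} + \frac{\mu}{2^{n+2}}$ (so $k_0 = \mu/2$ and $k_n\downarrow\mu/4$) on the nested cylinders $\tQ_n := B_{r_n}\times(-r_n^2, r_n^2)$ with $r_n := \frac{1}{2} + \frac{1}{2^{n+1}}$ (so $\tQ_0 = Q_1$ and $\bigcap_n \tQ_n = Q_{1/2}$). Fix standard cutoffs $\phi_n\in C^\infty_c(\tQ_n)$ with $\phi_n\equiv 1$ on $\tQ_{n+1}$, $|\nabla\phi_n|\lesssim 2^n$, $|(\phi_n)_t| + |\nabla^2\phi_n|\lesssim 4^n$, and write $A_n := \{z\in\tQ_n : w_{x_i}(z)<k_n\}$ and $Y_n := |A_n|/|\tQ_n|$.

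The first step is to apply \cref{energy_w} with $k=k_n$ and $\phi=\phi_n$. The uniform bound $|\nabla w|\leq A\mu$ together with $(w_{x_i}-k_n)_-\leq \mu$, $\mu\leq\mu_0\leq 1$, and $p\leq q$ (which dominates all $q$-phase contributions by the $p$-phase ones) shows that every right-hand side term is bounded by $C\cdot 4^n\,\mu^{p+1}\,|A_n|$, with $C$ depending only on $A,M,p,q,C_0,C_1$. After dividing through by $\mu^{p-1}$ and discarding the nonnegative $q$-phase coercivity, this reads
\[
\sup_t\int_{B_{r_n}}(w_{x_i}-k_n)_-^2\phi_n^2\,dx \,+\, C_0\,\mu^{2-p}\!\iint_{\tQ_n}|\nabla w|^{p-2}|\nabla(w_{x_i}-k_n)_-|^2\phi_n^2\,dz \,\leq\, C\,4^n\,\mu^2\,|A_n|.
\]

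The main obstacle lies in extracting an \emph{unweighted} gradient estimate from the second term on the left. In the singular regime $p\leq 2$ this is immediate: $|\nabla w|\leq A\mu$ gives $\mu^{2-p}|\nabla w|^{p-2}\geq A^{p-2}$. In the degenerate regime $p>2$, this pointwise bound reverses precisely on the bad set $\{w_{x_i}<k_n\}$ where the truncation gradient is supported, so a direct attack does not close. The remedy is the intrinsic-scaling mechanism of DiBenedetto--Friedman \cite{dibenedettoHolderEstimatesNonlinear1985}: the smallness hypothesis \cref{9.48} forces, through a preliminary run of the iteration on slightly enlarged cylinders, a pointwise lower bound $|\nabla w|\gtrsim\mu$ on the regions that contribute to the coercivity, which restores $\mu^{2-p}|\nabla w|^{p-2}\gtrsim 1$ there. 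In either case one arrives at
\[
\sup_t\int (w_{x_i}-k_n)_-^2\phi_n^2\,dx \,+\, \iint|\nabla(w_{x_i}-k_n)_-|^2\phi_n^2\,dz \,\leq\, C\,4^n\,\mu^2\,|A_n|.
\]

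With this unweighted energy inequality, apply \cref{sobolev-poincare} with $s=2$ to $v_n := (w_{x_i}-k_n)_-\phi_n$ (extended by zero). The product rule yields $\|v_n\|_{V^2(\tQ_n)}^2\leq C\,4^n\,\mu^2\,|A_n|$, and since $\operatorname{supp} v_n\subset A_n$ we obtain $\|v_n\|_{L^2}^2\leq C\,|A_n|^{2/(N+2)}\|v_n\|_{V^2}^2\leq C\,4^n\,\mu^2\,|A_n|^{1+2/(N+2)}$. On $A_{n+1}\subset\tQ_{n+1}$ we have $\phi_n\equiv 1$ and $(w_{x_i}-k_n)_-\geq k_n-k_{n+1}=\mu/2^{n+3}$, giving
\[
\Bigl(\frac{\mu}{2^{n+3}}\Bigr)^{\!2}|A_{n+1}| \,\leq\, \iint v_n^2\,dz \,\leq\, C\,4^n\,\mu^2\,|A_n|^{1+2/(N+2)}.
\]
Since $|\tQ_n|\asymp 1$ uniformly in $n$, this rearranges to the recursion $Y_{n+1}\leq C\,b^n\,Y_n^{1+2/(N+2)}$ with $b=16$. \Cref{iteration} then produces a universal $\nu\in(0,1/2)$ (depending only on $N,p,q,C_0,C_1,M,A$) such that $Y_0\leq\nu$ forces $Y_n\to 0$, whence $|\{w_{x_i}<\mu/4\}\cap Q_{1/2}| = 0$ and the conclusion follows.
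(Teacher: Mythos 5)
Your argument is sound in the singular range $p\le 2$, where $|\nabla w|\le A\mu$ immediately gives $\mu^{2-p}|\nabla w|^{p-2}\ge A^{p-2}$, and you correctly diagnose that this breaks down for $p>2$ because $|\nabla w|$ may be small exactly where the gradient of $(w_{x_i}-k_n)_-$ lives. The gap is in the remedy you propose: a ``preliminary run of the iteration on slightly enlarged cylinders'' is circular, since that run would face precisely the same problem of extracting an unweighted gradient estimate from the degenerate weight, and in any case a pointwise lower bound on the full gradient $|\nabla w|$ does not by itself control the single component $w_{x_i}$ as the conclusion requires.

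The paper resolves this without any case split by a double truncation. One applies \cref{sobolev-poincare} not to $(w_{x_i}-k_m)_-\phi_m$ but to $\tw_m\eta_m$, where
\[
\tw_m :=
\left\{\begin{array}{ll}
0 & \ w_{x_i}>k_m,\\
k_m - w_{x_i} & \ k_{m+1}<w_{x_i}\le k_m,\\
k_m - k_{m+1} & \ w_{x_i}\le k_{m+1}.
\end{array}\right.
\]
Since $\tw_m$ is constant on $\{w_{x_i}\le k_{m+1}\}$, its gradient is supported on the annulus $\{k_{m+1}\le w_{x_i}<k_m\}$, and there one has automatically $|\nabla w|\ge w_{x_i}\ge k_{m+1}\ge\mu/4$, hence $\mu^{p-1}\le 4^{p-1}|\nabla w|^{p-1}$ on the support of $\nabla\tw_m$ (this is \cref{9.56}). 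Combining this with $A\mu\,|\nabla w|^{p-2}\ge|\nabla w|^{p-1}$, which follows from \cref{3.3_w}, the unweighted gradient term appearing after the Sobolev step is dominated by the weighted coercive term of \cref{energy_w} uniformly in $p$, with no need to distinguish singular from degenerate regimes. The lower bound on $|A_{m+1}|$ comes from $\tw_m\ge k_m-k_{m+1}$ on $A_{m+1}$, so the iteration closes exactly as you wrote. This double truncation is the device your proof is missing; once it is inserted the intrinsic-scaling ``remedy'' is neither needed nor, as stated, sufficient.
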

\begin{proof}
Let us define the following constants:
\begin{equation}\label{def_k_m}
        k_m:= k_0 - \frac{H}{8(1+A)} \lbr 1 - \frac{1}{2^m}\rbr \txt{where} H:= \sup_{Q_1} (w_{x_i}-k_0)_- \txt{and} k_0:= \frac{\mu}{2}.
    \end{equation}%
    Let us also define the following sequence of dyadic parabolic cylinders:
    \begin{equation*}
        Q_m:= Q_{\rho_m} \txt{where} \rho_m := \frac12 + \frac{1}{2^{m+1}}.
    \end{equation*}
    Note that $Q_m \rightarrow Q_{1/2}$ and $Q_0 = Q_1$. 
    Furthermore, let us consider the following cut-off function $\eta_m \in C^{\infty}(Q_m)$ with $\eta_m = 0$ on $\pa_pQ_m$ and $\eta_m \equiv 1$ on $ Q_{m+1}$. The following bounds hold,
    \begin{equation*}
        |\nabla^ 2\eta_m| + |\nabla\eta_m|^2 + |(\eta_m)_t| \leq C(N) 4^m.
    \end{equation*}%
    We will split the proof of the lemma into several steps.
    \begin{description}[leftmargin=*]
        \ditem{\underline{Step $1$:}}{step1} In this step, we show that without loss of generality, we can assume $4H \geq \mu$. Suppose not, then we would have $4H < \mu$ which implies
        \begin{equation*}
        \sup_{Q_1} (w_{x_i}-k_0)_- = \frac{\mu}{2} - \inf_{Q_1} w_{x_i} < \frac{\mu}{4}.
    \end{equation*}
    This says  $w_{x_i} > \frac{\mu}{4}$ and the desired conclusion of \cref{lemma3.1}  follows.
    With the assumption on $H$, the choice of $k_m$ in~\eqref{def_k_m} satisfy the following bounds:
    \begin{equation}\label{3.18}
        k_m - k_{m+1} \geq \frac{\mu}{2^{m+6}(1+A)}, \qquad k_m \geq \frac{\mu}{4} \txt{and} k_m \rightarrow k_{\infty} = k_0 - \frac{H}{8(1+A)} > \frac{\mu}{4}.
    \end{equation}
        \ditem{\underline{Step $2$:}}{step2} In this step, we will apply \cref{sobolev-poincare} to obtain an estimate for the level sets. In order to do this, let us define 
        \begin{equation*}
            A_m := \{(x,t) \in Q_m: w_{x_i} < k_m\},
    \end{equation*}%
    and consider the following function
    \begin{equation*}
        \tw_m := \left\{ \begin{array}{ll}
                            0 &\ \  \text{if} \ \ w_{x_i} > k_m, \\
                            k_m - w_{x_i} &\ \  \text{if} \ \ k_m \geq w_{x_i} > k_{m+1}, \\
                            k_m - k_{m+1} &\ \  \text{if} \ \  k_{m+1} \geq w_{x_i}. 
                        \end{array}\right.
    \end{equation*}%
    Since $\eta_m = 1$ on $Q_{m+1}$, we obtain the following sequence of estimates: 
    \begin{equation}\label{3.21}
        \begin{array}{rcl}
            \mu^{p-1} (k_m - k_{m+1})^2 |A_{m+1}| & = & \mu^{p-1} \|\tw_m\|_{L^2(A_{m+1})}^2\\
            & \overset{\redlabel{321a}{a}}{\leq} & \mu^{p-1} \|\tw_m\eta_m\|_{L^2(Q_{m})}^2\\
            & \overset{\redlabel{321b}{b}}{\apprle} &  \mu^{p-1} \|\tw_m\eta_m\|_{V^{2,2}(Q_{m})}^2 |A_m|^{\frac{2}{N+2}},
        \end{array}
    \end{equation}
    where to obtain \redref{321a}{a}, we enlarged the domain and made use of the fact that $\spt(\eta_m) \subset Q_m$ and to obtain \redref{321b}{b}, we made use of \cref{sobolev-poincare} noting that $\tw_m \eta_m$ is non-negative.
    Then, observing that 
    \[
        \tw_m \leq (w_{x_i} - k_m)_- \txt{and} |\nabla \tw_m| \leq |\nabla (w_{x_i} - k_m)_-|  \lsb{\chi}{Q_1 \setminus \{w_{x_i} < k_{m+1}\}},
    \]
    and recalling \cref{func_space},  we get
    \begin{equation}\label{9.55}
        \begin{array}{rcl}
            \mu^{p-1} \|\tw_m\eta_m\|_{V^2(Q_{m})}^2 & \leq & \sup_{-1<t<1} \mu^{p-1} \int_{B_1} (w_{x_i}-k_m)_-^2 \eta_m^2 \ dx \\
            && \qquad + \mu^{p-1} \iint_{Q_1} |\nabla (w_{x_i} - k_m)_-|^2 \lsb{\chi}{Q_1 \setminus \{w_{x_i} < k_{m+1}\}} \eta_m^2 \ dz\\
            &&\qquad  + \mu^{p-1} \iint_{Q_1} (w_{x_i} - k_m)_-^2 |\nabla \eta_m|^2\ dz.
        \end{array}
    \end{equation}
        \ditem{\underline{Step $3$:}}{step3} In this step, we shall estimate \cref{9.55} and obtain a suitable decay of the level set $A_m$ as follows: From \cref{3.18}, we see that 
        \begin{equation}\label{9.56}
            \mu \leq 4 k_{m+1} \leq 4 w_{x_i} \leq 4|\nabla w| \txt{on} Q_1 \setminus \{w_{x_i} < k_{m+1}\}.
        \end{equation}
        Thus making use of \cref{9.56} into \cref{9.55}, we get
    \begin{equation}\label{3.23}
        \begin{array}{rcl}
            \mu^{p-1} \|\tw_m\eta_m\|_{V^2(Q_{m})}^2 & \leq & \sup_{-1<t<1} \mu^{p-1} \int_{B_1} (w_{x_i}-k_m)_-^2 \eta_m^2 \ dx \\
            && \qquad +  4^{p-1}\iint_{Q_1} |\nabla w|^{p-1} |\nabla (w_{x_i} - k_m)_-|^2 \lsb{\chi}{Q_1 \setminus \{w_{x_i} < k_{m+1}\}} \eta_m^2 \ dz\\
            &&\qquad  + \mu^{p-1} \iint_{Q_1} (w_{x_i} - k_m)_-^2 |\nabla \eta_m|^2\ dz.
        \end{array}
    \end{equation}
    From \cref{3.3_w}, we see that 
    \begin{equation}\label{3.24}    
            A \mu \iint_{Q_1} |\nabla w|^{p-2} |\nabla (w_{x_i}-k)_-|^2 \phi^2 \ dz \geq  \iint_{Q_1} |\nabla w|^{p-1} |\nabla (w_{x_i}-k)_-|^2 \phi^2\ dz.
    \end{equation}
    Thus substituting \cref{3.24} into \cref{3.23} and making use of \cref{energy_w} to estimate each of the terms appearing on the right hand side of \cref{3.23} using \cref{3.3_w}, we get
\begin{equation}\label{3.25}
        \begin{array}{rcl}
            \mu^{p-1} \|\tw_m\eta_m\|_{V^2(Q_{m})}^2 & \lesssim & \mu \iint_{Q_1}(|\nabla w|^p+|\nabla w|^q) |\nabla \eta_m|^2\lsb{\chi}{\{w_{x_i} \leq k_m\}}\ dz\\
            && + \mu \iint_{Q_1} (|\nabla w|^{p-1}+|\nabla w|^{q-1}) (w_{x_i} - k_m)_- ( |\eta_m||\nabla^2\eta_m| + |\nabla \eta_m|^2)\\
            && + \mu^{p-1} \iint_{Q_1} (w_{x_i} - k_m)_-^2  |\nabla  \eta_m|^2 \\
            & \leq & C \mu^{p+1} (1+\mu^{\frac{q}{p}}+\mu^{\frac{q-1}{p-1}}) 4^m  |A_m|\\
            & \leq & C \mu^{p+1} 4^m  |A_m|,
\end{array}
\end{equation} where the last inequality follows from $\mu\leq \mu_0\leq 1$ and by redefining constant $C$.
Thus combining \cref{3.25} with \cref{3.21} and making use of \cref{3.18}, we get
\begin{equation*}
    \mu^{p-1}\frac{\mu^2}{4^{m+6} (1+A)^2} |A_{m+1}| \leq C 4^m\mu^{p+1} |A_m|^{1+\frac{2}{N+2}} \Longleftrightarrow |A_{m+1}| \leq C16^m  |A_m|^{1+\frac{2}{N+2}}.
\end{equation*}%
We can now apply \cref{iteration} to obtain the existence of a universal constant $\nu \in (0,1)$ such that if
\[
    |A_0|= |\{Q_1: w_{x_i} < \mu/2\}|  < \nu |Q_1|,
\]
then $|A_m| \rightarrow 0$ as $m \rightarrow \infty$. In particular, this says
\[
    w_{x_i} \geq \frac{\mu}{4} \txt{on} Q_{1/2}.
\]
    \end{description}
This completes the proof of the lemma. 
\end{proof}

Following analogous calculations, we can also obtain the dual version of \cref{prop3.1}.
\begin{proposition}\label{prop_upper_bnd}
    Let $r \in (0,R]$ and assume that 
    \begin{equation*}
        \sup_{Q_r^{\mu}} \|\nabla u\| \leq A \mu
    \end{equation*}
    holds for some $A \geq 1$. For any $i \in \{1,2,\ldots,N\}$,  there exists universal constant $\nu \in (0,1/2)$ such that if
    \begin{equation}\label{9.48_alt}
        | \{ (x,t) \in Q_r^{\mu} : u_{x_i} > -\mu/2\}| \leq \nu |Q_r^{\mu}|,
    \end{equation}
    holds, then we have the following conclusion:
    \begin{equation*}
        u_{x_i} \leq -\frac{\mu}{4} \txt{on} Q_{r/2}^{\mu}.
    \end{equation*}
\end{proposition}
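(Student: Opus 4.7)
The plan is to deduce~\cref{prop_upper_bnd} from the already established~\cref{prop3.1} via the involution $\tilde{u}(x,t) := -u(x,t)$, thereby sidestepping any re-run of the De Giorgi iteration. Setting $\tilde{\mathcal{A}}_p(z) := -\mathcal{A}_p(-z)$ and $\tilde{\mathcal{A}}_q(z) := -\mathcal{A}_q(-z)$, direct sign-chasing in the distributional equation $u_t - \dv\bigl(\mathcal{A}_p(\nabla u) + a(t)\mathcal{A}_q(\nabla u)\bigr)=0$ gives
\[
\tilde{u}_t - \dv\bigl(\tilde{\mathcal{A}}_p(\nabla \tilde{u}) + a(t)\tilde{\mathcal{A}}_q(\nabla \tilde{u})\bigr) = 0
\]
on the same cylinder $Q_r^\mu$.

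The key check is that $\tilde{\mathcal{A}}_p$ and $\tilde{\mathcal{A}}_q$ satisfy~\descref{H1}{H1} and~\descref{H2}{H2} with the \emph{same} constants $C_0, C_1, M$. Since $\tilde{\mathcal{A}}_p'(z) = \mathcal{A}_p'(-z)$, the ellipticity inequality becomes $\langle \mathcal{A}_p'(-z)\zeta,\zeta\rangle \geq C_0|{-z}|^{p-2}|\zeta|^2 = C_0|z|^{p-2}|\zeta|^2$; the growth bound $|\tilde{\mathcal{A}}_p(z)| + |\tilde{\mathcal{A}}_p'(z)||z| = |\mathcal{A}_p(-z)| + |\mathcal{A}_p'(-z)||{-z}| \leq C_1|z|^{p-1}$ follows by the same replacement, and identical reasoning applies to $\tilde{\mathcal{A}}_q$. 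The coefficient $a(t)$ is untouched.

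It remains to translate the hypothesis and conclusion. Since $\nabla \tilde u = -\nabla u$, the gradient bound $\sup_{Q_r^\mu}|\nabla \tilde u| \leq A\mu$ is inherited, and
\[
\{(x,t)\in Q_r^\mu : u_{x_i} > -\mu/2\} = \{(x,t)\in Q_r^\mu : \tilde u_{x_i} < \mu/2\},
\]
so that the hypothesis~\eqref{9.48_alt} is precisely the hypothesis of~\cref{prop3.1} applied to $\tilde u$. Invoking~\cref{prop3.1} with the same universal $\nu$ yields $\tilde u_{x_i} \geq \mu/4$ on $Q_{r/2}^\mu$, which is exactly $u_{x_i} \leq -\mu/4$ on $Q_{r/2}^\mu$ as required.

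There is no substantive obstacle in this approach: the assumptions~\descref{H1}{H1}--\descref{H2}{H2} depend on $z$ only through $|z|$, making them invariant under $z \mapsto -z$ up to the sign twist built into $\tilde{\mathcal{A}}$. If a self-contained argument is preferred, one may instead repeat the De Giorgi iteration of~\cref{lemma3.1} \emph{mutatis mutandis} with the truncations $(w_{x_i}-k_m)_+$ replacing $(w_{x_i}-k_m)_-$, using the level sequence $k_m = -\mu/2 + \tfrac{H}{8(1+A)}(1-2^{-m})$ with $H := \sup_{Q_1}(w_{x_i}+\mu/2)_+$; on the good set $\{w_{x_i} \leq k_{m+1}\}$ the inequality $w_{x_i} \leq k_{m+1} \leq -\mu/4$ gives $|\nabla w| \geq |w_{x_i}| \geq \mu/4$, which recovers the analog of~\eqref{9.56}, after which the iteration runs identically.
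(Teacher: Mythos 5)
Your proposal is correct, and the symmetry route you take is genuinely different from what the paper implicitly has in mind. The paper's proof of \cref{prop_upper_bnd} consists of the single phrase ``Following analogous calculations,'' which signals a re-run of the De Giorgi iteration in \cref{lemma3.1} with the truncations $(w_{x_i}-k_m)_+$, a rising level sequence, and the flipped analogue of \cref{9.56}. You instead observe that $\tilde u = -u$ solves the same class of equations: the nonlinearities $\tilde{\mathcal A}_p(z)=-\mathcal A_p(-z)$, $\tilde{\mathcal A}_q(z)=-\mathcal A_q(-z)$ inherit \descref{H1}{H1}--\descref{H2}{H2} with unchanged constants because those bounds involve $z$ only through $|z|$, and $\nabla\tilde u = -\nabla u$ converts the hypothesis \cref{9.48_alt} into exactly the hypothesis of \cref{prop3.1} for $\tilde u$. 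This reduction is a clean shortcut: it avoids re-verifying the energy estimate, the choice of levels, the Sobolev--Poincar\'e application, and the iteration, at the modest cost of the one-line structural check you carried out. One small caution is in order when using the shortcut: \cref{prop3.1} is stated for the fixed weak solution $u$ of the ambient equation, so to invoke it for $\tilde u$ you are really invoking the \emph{proof} of \cref{prop3.1} applied to the rescaled equation for $\tilde u$ --- which is fine, as your structural check shows, but should be said. Your sketch of the direct re-run (levels $k_m = -\mu/2 + \tfrac{H}{8(1+A)}(1-2^{-m})$, $H=\sup_{Q_1}(w_{x_i}+\mu/2)_+$, and the bound $|\nabla w|\ge|w_{x_i}|\ge\mu/4$ on $\{w_{x_i}\le k_{m+1}\}$ replacing \cref{9.56}) is also accurate and matches what ``analogous calculations'' would produce.
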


\subsubsection{Proof of the decay estimate from applying the linear theory}
Let us first recall the weak Harnack inequality and a decay estimate that will be needed to complete the proof of \cref{alt1}, the details can be found in \cite[Lemma 3.1]{kuusiWolffGradientBound2014}.
\begin{lemma}\label{lemma_linear}
    Let $v \in L^2(-1,1;W^{1,2}(B_1))$ be a weak solution to the linear parabolic equation 
    \[
        v_t - \dv (B(x,t) \nabla v) = 0,
    \]
    where the matrix $B(x,t)$ is bounded, measurable and satisfies
    \[
        C_0 |\zeta|^2 \leq \iprod{B(x,t)\zeta}{\zeta} \txt{and} |B(x,t)| \leq C_1,
    \]
    for any $\zeta \in \RR^N$ and $0<C_0 \leq C_1$ are fixed constants. Then there exists a constant $C = C(N,C_0,C_1) \geq 1$ and $\be = \be(N,C_0,C_1) \in (0,1)$ such that the following estimates are satisfied:
    \begin{gather*}
        \sup_{Q_{1/2}}|v| \leq C \lbr \fiint_{Q_1} |v|^q \ dz \rbr^{\frac{1}{q}} \txt{for any $q \in [1,2]$,}\\
        \lbr \fiint_{Q_{\de}} |v - \avgs{v}{Q_{\de}}|^q \ dz \rbr^{\frac{1}{q}} \leq C \de^{\be} \lbr \fiint_{Q_{1}} |v - \avgs{v}{Q_{1}}|^q \ dz \rbr^{\frac{1}{q}} \txt{whenever $q \in [1,2]$ and $\de \in (0,1)$.}
    \end{gather*}
\end{lemma}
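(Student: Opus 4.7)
The statement collects two classical facts about solutions of the linear parabolic divergence-form equation $v_t - \dv(B(x,t)\nabla v) = 0$ with measurable, uniformly elliptic coefficients $B$. The plan is to obtain both estimates via the De Giorgi--Nash--Moser framework, using the parabolic Sobolev--Poincar\'e inequality \cref{sobolev-poincare} and the iteration lemma \cref{iteration} as the main tools.

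\textbf{Step 1 (sup bound).} First I would derive a Caccioppoli-type inequality by testing the weak formulation with $v_\pm^{r-1}\phi^2$ for $r \geq 1$ and a standard space-time cut-off $\phi$, yielding, after using the ellipticity of $B$ and Young's inequality,
\[
\sup_t \int_{B_1} v_\pm^r \phi^2 \, dx + \iint \bigl|\nabla(v_\pm^{r/2}\phi)\bigr|^2 \, dz \leq C r^2 \iint v_\pm^r (|\phi_t|+|\nabla\phi|^2) \, dz.
\]
The parabolic Sobolev embedding then upgrades $L^r$-control on a larger cylinder to $L^{r\kappa}$-control on a smaller one, with $\kappa = (N+2)/N > 1$. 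Iterating this improvement through a sequence of nested cylinders shrinking to $Q_{1/2}$, in the style of Moser, transfers an $L^2$-bound on $Q_1$ into a sup-bound on $Q_{1/2}$. A final interpolation (bounding $|v|^2$ by $(\sup|v|)^{2-q}|v|^q$ and absorbing via Young's inequality) yields the first inequality for every $q \in [1,2]$.

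\textbf{Step 2 (oscillation decay).} Since constants solve the equation, $v - c$ remains a solution for every $c \in \RR$, so one may work with the oscillation $\omega(r) := \sup_{Q_r} v - \inf_{Q_r} v$. The classical De Giorgi oscillation lemma for parabolic equations asserts the existence of a universal $\theta \in (0,1)$ such that $\omega(1/2) \leq (1-\theta)\omega(1)$. Iterating on the geometric scale $2^{-k}$ produces $\omega(r) \leq C r^\beta \omega(1)$ with $\beta := -\log_2(1-\theta) \in (0,1)$, that is, pointwise H\"older continuity of $v$. Combining this with the sup-bound of Step 1 applied to the solution $v - (v)_{Q_1}$ (which controls $\omega(1/2)$ by the $L^q$-mean deviation on $Q_1$) yields the stated $L^q$-mean oscillation decay for $\delta \leq 1/2$; the range $\delta \in (1/2, 1)$ is trivial after enlarging the constant.

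\textbf{Main obstacle.} The technical heart is the parabolic oscillation lemma of Step 2: unlike the elliptic case, where it follows from a single De Giorgi iteration, the parabolic version requires coupling a level-set iteration with a time-propagation (or ``expansion of positivity'') argument to ensure that a measure-theoretic lower bound at one time instant spreads forward in time before reduction of oscillation can be concluded. Once this is in hand, the Moser iteration for the sup bound is routine and the passage to the $L^q$-mean oscillation estimate is a purely real-analytic manipulation. A fully written-out proof of both statements appears in \cite[Lemma 3.1]{kuusiWolffGradientBound2014}, which the paper cites.
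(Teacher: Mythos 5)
The paper does not actually prove this lemma; it simply recalls it with a pointer to \cite[Lemma 3.1]{kuusiWolffGradientBound2014}. Your outline reproduces exactly the classical De Giorgi--Nash--Moser argument (Moser iteration for the sup bound plus interpolation to reach all $q\in[1,2]$, then the parabolic oscillation-decay lemma applied to $v-(v)_{Q_1}$ on $Q_{1/2}$ to obtain the $L^q$-mean oscillation decay, with the range $\de\in(1/2,1)$ handled by enlarging the constant), which is precisely what the cited reference contains, so the proposal is correct and consistent with the paper's treatment.
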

 We have now collected all the estimates that are required in the proof of \cref{alt1}. The following proof follows closely that of \cite[Lemma 3.2]{kuusiWolffGradientBound2014} and we only verify the hypothesis on the linearised equation.
 \begin{lemma}\label{lemma3.2}
     Assume that in the cylinder $Q_r^{\mu}$, the following is satisfied for a fixed $A \geq 1$:
     \begin{equation*}%
         0 < \frac{\mu}{4} \leq \|\nabla u\|_{L^{\infty}(Q_r^{\mu})} \leq A \mu.
     \end{equation*}%
     Then there exists constants $\be= \be(N,p,C_0,C_1,M,A) \in (0,1)$ and $C = C(N,p,C_0,C_1,M,A) \geq 1$ such that the following holds for any $\de \in (0,1)$ and $q \geq 1$:
     \begin{equation*}%
         \lbr \fiint_{Q_{\de r}^{\mu}} |\nabla u - \avgs{\nabla u}{Q_{\de r}^{\mu}}|^q \ dz \rbr^{\frac{1}{q}} \leq C \de^{\be} \lbr \fiint_{Q_{r}^{\mu}} |\nabla u - \avgs{\nabla u}{Q_{r}^{\mu}}|^q \ dz \rbr^{\frac{1}{q}}.
     \end{equation*}%
 \end{lemma}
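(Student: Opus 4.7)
The plan is to reduce the nonlinear equation to a uniformly parabolic linear equation and then quote \cref{lemma_linear}. First, I rescale as in \cref{w_rescale} so that $w$ solves \cref{def_w} on $Q_1 = B_1 \times (-1,1)$; under this change of variables $\nabla w$ equals $\nabla u$ evaluated at the corresponding point, so the bilateral bound $\frac{\mu}{4} \leq |\nabla u| \leq A\mu$ transfers to $\frac{\mu}{4} \leq |\nabla w| \leq A\mu$ on $Q_1$.

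Next I differentiate \cref{def_w} in $x_i$ for each $i \in \{1,\dots,N\}$. Writing $v := w_{x_i}$ and
\[
\tilde B(x,t) := \mu^{2-p}\bigl[\mathcal{A}'_p(\nabla w(x,t)) + a(t)\mathcal{A}'_q(\nabla w(x,t))\bigr],
\]
this yields the linear equation $v_t - \dv(\tilde B\, \nabla v) = 0$ on $Q_1$ (after dividing through by $\mu^{p-1}$ and absorbing the factor $\mu^{2-p}$ into the coefficient matrix). The crucial point is that, since $|\nabla w|$ is comparable to $\mu$ with constants depending only on $A$, the structural hypotheses \descref{H1}{H1} and \descref{H2}{H2} together with $\mu \leq \mu_0 \leq 1$ and $q \geq p$ yield
\[
C_0 \mu^{2-p} |\nabla w|^{p-2} |\zeta|^2 \leq \langle \tilde B(x,t)\zeta,\zeta\rangle, \qquad |\tilde B(x,t)| \leq C_1 \mu^{2-p}\bigl(|\nabla w|^{p-2} + M|\nabla w|^{q-2}\bigr),
\]
and using $|\nabla w| \in [\mu/4, A\mu]$ (splitting by whether $p \geq 2$ or $p < 2$, and similarly for $q$), all the powers of $\mu$ drop out and $\tilde B$ is seen to be uniformly elliptic with ellipticity constants depending only on $N,p,q,C_0,C_1,M,A$.

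With uniform ellipticity in hand, \cref{lemma_linear} applied to $v = w_{x_i}$ gives the decay
\[
\lbr \fiint_{Q_\de} |v - \avgs{v}{Q_\de}|^q \, dz \rbr^{1/q} \leq C \de^{\be} \lbr \fiint_{Q_1} |v - \avgs{v}{Q_1}|^q \, dz \rbr^{1/q}
\]
for any $q \in [1,2]$ and $\de \in (0,1)$. Summing over $i = 1,\ldots,N$ and undoing the parabolic rescaling \cref{w_rescale} converts the cylinders $Q_\de, Q_1$ into $Q_{\de r}^\mu, Q_r^\mu$ and replaces $\nabla w$ by $\nabla u$, giving the desired inequality in the range $q \in [1,2]$. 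Finally, extension to arbitrary $q \geq 1$ follows by combining the decay estimate with the sup bound from \cref{lemma_linear} via Hölder's inequality (using $(\fiint |\cdot|^2)^{1/2} \leq (\fiint |\cdot|^q)^{1/q}$ for $q \geq 2$), which slightly inflates the constant $C$ but preserves the H\"older exponent $\be$.

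The main obstacle is the bookkeeping in the third paragraph: one must carefully verify that, in each of the four sign cases for $(p-2)$ and $(q-2)$, the factor $\mu^{2-p}$ exactly cancels the powers of $\mu$ appearing from the bounds on $|\nabla w|$, using $q \geq p$ and $\mu \leq 1$ to control the cross-term $\mu^{q-p}$. Once this is carried out, the remainder of the argument is the standard transfer of regularity from a nonlinear non-degenerate equation to its linearisation, exactly as in \cite[Lemma~3.2]{kuusiWolffGradientBound2014}.
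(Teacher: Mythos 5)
Your proof is correct and follows the same route as the paper: rescale via \cref{w_rescale}, differentiate \cref{def_w}, observe the matrix $B(x,t)=\mu^{2-p}\bigl(\partial\aa_p(\nabla w)+a(t)\partial\aa_q(\nabla w)\bigr)$ is uniformly elliptic under the bilateral gradient bound and $\mu\leq 1$, $q\geq p$, and then invoke \cref{lemma_linear}. The paper states only the ellipticity verification and defers the remaining transfer-of-regularity argument (including the extension from $q\in[1,2]$ to all $q\geq 1$) to \cite[Lemma~3.2]{kuusiWolffGradientBound2014}; you spell out that extension via the sup bound and H\"older, and make explicit the four sign cases for $p-2$, $q-2$, but the underlying argument is identical.
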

 \begin{proof}
     We rescale $u$ according to \cref{w_rescale} as was done in the proof of \cref{lemma3.1}, then the hypothesis becomes 
     \begin{equation}\label{first_alt_hyp}
         0 < \frac{\mu}{4} \leq \|\nabla w\|_{L^{\infty}(Q_1)} \leq A \mu.
     \end{equation}
Differentiating \cref{def_w} and dividing the resulting equation by $\mu^{p-1}$, we see that $w_{x_i}$ solves
\begin{equation*}
    (w_{x_i})_t - \dv (B(x,t) \nabla w_{x_i}) = 0 \txt{where} B(x,t) = \mu^{2-p} \lbr \pa \aa_p(\nabla w)+a(t) \pa \aa_q(\nabla w) \rbr.
\end{equation*}%
Making use of \descref{H1}{H1} and \descref{H2}{H2} along with \cref{first_alt_hyp}, we see that the matrix $B(x,t)$ satisfies the following bounds:
\begin{equation*}
    C |\zeta|^2 \leq \iprod{B(x,t)\zeta}{\zeta} \leq C\left( \lbr \frac{\mu}{\mu} \rbr^{p-2} + \lbr \frac{\mu^{q-2}}{\mu^{p-2}} \rbr \right) |\zeta|^2 = C\left( \lbr \frac{\mu}{\mu} \rbr^{p-2} + \lbr \frac{\mu^{q-1}}{\mu^{p-1}} \rbr \right) |\zeta|^2 \leq C |\zeta|^2,
\end{equation*}
for any $\zeta \in \RR^N$ and $C= C(N,p,C_0,C_1,M,A)$. The last inequality follows due to $\mu\leq \mu_0\leq 1$. 
\end{proof}

 \subsubsection{Proof of \texorpdfstring{\cref{alt1}}.}
 The proof of the following proposition follows verbatim as in \cite[Proposition 3.3]{kuusiWolffGradientBound2014} by applying the estimates obtained in the previous subsections.
 \begin{proposition}
     \label{prop3.3}
     Assume that \cref{3.3} is in force, then there exists $\nu = \nu(N,p,C_0,C_1,A)\in (0,1/2)$ such that if there exists $i \in \{1,2,\ldots,N\}$ such that either \cref{9.48} or \cref{9.48_alt} holds, then there exists $\be = \be(N,p,C_0,C_1,A) \in (0,1)$ and $C= C(N,p,C_0,C_1,A)$ such that for any $\de \in (0,1)$, the following conclusions hold:
     \begin{gather*}
                  \lbr \fiint_{Q_{\de r}^{\mu}} |\nabla u - \avgs{\nabla u}{Q_{\de r}^{\mu}}|^q \ dz \rbr^{\frac{1}{q}} \leq C \de^{\be} \lbr \fiint_{Q_{r}^{\mu}} |\nabla u - \avgs{\nabla u}{Q_{r}^{\mu}}|^q \ dz \rbr^{\frac{1}{q}},\\
                  \|\nabla u\| \geq \frac{\mu}{4} \txt{on $Q_{r/2}^{\mu}$.}
     \end{gather*}
 \end{proposition}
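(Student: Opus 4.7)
The plan is to combine the two lower-bound propositions with the linear decay estimate already at our disposal. Under the hypothesis \cref{3.3} together with either \cref{9.48} or \cref{9.48_alt}, \cref{prop3.1} or its dual counterpart \cref{prop_upper_bnd} applies with the same universal $\nu\in(0,1/2)$, yielding either $u_{x_i}\geq \mu/4$ or $u_{x_i}\leq-\mu/4$ on $Q_{r/2}^\mu$. In both cases $|\nabla u|\geq |u_{x_i}|\geq \mu/4$ pointwise on $Q_{r/2}^\mu$, which is precisely the second claimed conclusion.

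Next, on the smaller cylinder $Q_{r/2}^\mu$ the full non-degeneracy bound
\[
0<\frac{\mu}{4}\leq\|\nabla u\|_{L^\infty(Q_{r/2}^\mu)}\leq A\mu
\]
is in force, so I invoke \cref{lemma3.2} with $r$ replaced by $r/2$ to obtain, for any $\de'\in(0,1)$ and $q\geq 1$,
\[
\lbr\fiint_{Q_{\de' r/2}^\mu}|\nabla u-\avgs{\nabla u}{Q_{\de' r/2}^\mu}|^q\,dz\rbr^{\!\!1/q}\leq C\,(\de')^\be\lbr\fiint_{Q_{r/2}^\mu}|\nabla u-\avgs{\nabla u}{Q_{r/2}^\mu}|^q\,dz\rbr^{\!\!1/q}.
\]

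Finally, I convert the estimate from scale $r/2$ to scale $r$. For $\de\in[1/2,1)$ the asserted inequality is trivial after enlarging $C$, since both averages are comparable up to a factor depending only on $N$ and $p$. For $\de\in(0,1/2)$, I set $\de'=2\de\in(0,1)$ so that $\de'\cdot r/2=\de r$ and apply the previous display, which produces the factor $(2\de)^\be\leq 2^\be\de^\be$. On the right-hand side, the inclusion $Q_{r/2}^\mu\subset Q_r^\mu$ together with the elementary fact that the mean is the best $L^q$-approximation gives
\[
\fiint_{Q_{r/2}^\mu}|\nabla u-\avgs{\nabla u}{Q_{r/2}^\mu}|^q\,dz\leq \frac{|Q_r^\mu|}{|Q_{r/2}^\mu|}\fiint_{Q_r^\mu}|\nabla u-\avgs{\nabla u}{Q_r^\mu}|^q\,dz\leq C(N,p)\fiint_{Q_r^\mu}|\nabla u-\avgs{\nabla u}{Q_r^\mu}|^q\,dz,
\]
which combines with the previous display to yield the desired decay at scale $\de r$ with an enlarged constant $C$ and the same exponent $\be$.

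The argument is essentially a gluing of the two main auxiliary results, so there is no serious analytic obstacle beyond those already resolved in \cref{prop3.1}, \cref{prop_upper_bnd}, and \cref{lemma3.2}. The one small point to handle with care is that the constants $\be$ and $C$ in \cref{lemma3.2} depend on $A$, and when we pass from scale $r/2$ to scale $r$ we want to keep this dependence (on $N,p,C_0,C_1,M,A$) unchanged; this is automatic because the radius change only affects the multiplicative constant through $|Q_r^\mu|/|Q_{r/2}^\mu|$, which is a universal factor independent of $\mu$ thanks to the intrinsic scaling that defines $Q_r^\mu$.
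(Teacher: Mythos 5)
Your argument is correct and reconstructs precisely the Kuusi--Mingione proof that the paper invokes by reference: \cref{prop3.1} or \cref{prop_upper_bnd} supplies the pointwise lower bound $|\nabla u|\geq\mu/4$ on $Q_{r/2}^{\mu}$, which activates \cref{lemma3.2} on that cylinder, and a scale change from $r/2$ to $r$ finishes. The only blemish is the claim that the mean is the best $L^q$-approximation among constants, which is true only for $q=2$; for general $q\geq 1$ one instead uses the quasi-minimality bound $\fiint_A |f-\avgs{f}{A}|^q\,dz\leq 2^q\fiint_A |f-c|^q\,dz$, valid for any constant $c$, which only introduces a harmless factor that is absorbed into $C$.
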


\subsection{Proof of second alternative - \texorpdfstring{\cref{alt2}}.}

With $\nu$ fixed as in \cref{alt1}, we now address the alternative where \cref{9.48} and \cref{9.48_alt} do not hold. This condition is equivalent to 
\begin{equation}\label{alt2_cond}
        |\{ Q_{r}^{\mu}: u_{x_i} \geq \mu/2\}| < (1-\nu) |Q_r^{\mu}| \txt{and}|\{ Q_{r}^{\mu}: u_{x_i} \leq -\mu/2\}| < (1-\nu) |Q_r^{\mu}|.
\end{equation}
We assume the following is always satisfied for some $r \in (0,R]$:
\begin{equation*}
 \sup_{Q_r^{\mu}} \|\nabla u\| \leq \mu.
\end{equation*}

Let us perform the following change of variables:
    \begin{equation}\label{9.69}
        w(x,t) = \frac{u(\mu^{-1}rx,\mu^{-p}r^2t)}{r} \txt{for} (x,t) \in Q_1.
    \end{equation}
    Then we have 
    \begin{equation}\label{9.70}
        \sup_{Q_1} \|\nabla w\| \leq 1 \txt{on} Q_1.
    \end{equation}
    Moreover, \cref{alt2_cond} becomes
    \begin{equation*}
        \abs{\left\{ Q_{1}: w_{x_i} \geq \frac{1}{2}\right\}} < (1-\nu) |Q_1| \txt{and}\abs{\left\{ Q_{1}: w_{x_i} \leq -\frac{1}{2}\right\}} < (1-\nu) |Q_1|.
    \end{equation*}

\subsubsection{Choosing a good time slice}
First let us show there exists a good time slice.
\begin{lemma}\label{cht}
    There exists $t_{\ast}$ such that $ -1 \leq t_{\ast} \leq -\frac{\nu}{2}$ and
    \begin{equation}\label{9.68}
        \abs{\left\{ B_{1} : w_{x_i}(x,t_{\ast}) \geq \frac{1}{2}\right\}} \leq \lbr \frac{1-\nu}{1-\nu/2}\rbr |B_{1}|
    \end{equation}

\end{lemma}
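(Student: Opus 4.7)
The statement is a pigeonhole/Chebyshev argument applied to time slices, of the same flavor as the ``measure-of-bad-times'' lemmas in DiBenedetto's scheme. The only input is the measure bound on the cylinder coming from the alternative \cref{alt2_cond}, together with Fubini.

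The plan is to proceed by contradiction. By Fubini, the measure of the ``bad'' set in the cylinder can be written as
\[
    \Bigl|\Bigl\{(x,t)\in Q_1: w_{x_i}(x,t)\geq\tfrac{1}{2}\Bigr\}\Bigr|=\int_{I_1}\Bigl|\Bigl\{x\in B_1: w_{x_i}(x,t)\geq\tfrac{1}{2}\Bigr\}\Bigr|\,dt,
\]
where $I_1$ is the time interval of $Q_1$. Suppose, for the sake of contradiction, that no time slice $t_\ast\in[-1,-\nu/2]$ satisfies \cref{9.68}; then for every $t$ in this interval of length $1-\nu/2$ one has
\[
    \Bigl|\Bigl\{x\in B_1: w_{x_i}(x,t)\geq\tfrac{1}{2}\Bigr\}\Bigr|>\frac{1-\nu}{1-\nu/2}\,|B_1|.
\]

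Next, I would integrate this pointwise-in-$t$ lower bound over $t\in[-1,-\nu/2]$, which yields
\[
    \int_{-1}^{-\nu/2}\Bigl|\Bigl\{x\in B_1: w_{x_i}(x,t)\geq\tfrac{1}{2}\Bigr\}\Bigr|\,dt>(1-\nu)\,|B_1|.
\]
Since the left-hand side is bounded above by $\bigl|\{Q_1:w_{x_i}\geq\tfrac{1}{2}\}\bigr|$, combining with the hypothesis of \cref{alt2_cond} in its rescaled form $\bigl|\{Q_1:w_{x_i}\geq\tfrac{1}{2}\}\bigr|<(1-\nu)|Q_1|$ produces the contradiction, upon comparing $(1-\nu)|B_1|$ with the right-hand-side normalization coming from $|Q_1|=|B_1|\cdot|I_1|$.

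There is essentially no obstacle: the argument is pure measure theory, and the precise threshold $\frac{1-\nu}{1-\nu/2}$ is dictated precisely to make the integration over $[-1,-\nu/2]$ balance against the global measure bound. The only thing to be slightly careful about is that $w_{x_i}(\cdot,t)$ is well-defined a.e.\ in $t$ as an $L^1$ function of $x$, which is standard from Fubini and the regularity of $w$; the slice measure is then a measurable function of $t$, so integrating the contradictory lower bound is justified.
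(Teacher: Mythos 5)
Your plan is exactly the route the paper itself takes for \cref{cht}: argue by contradiction, rewrite the measure of the bad set in $Q_1$ as a $t$-integral of slice measures via Fubini, integrate the negated slice bound over $(-1,-\nu/2)$, and compare against the cylinder bound from \cref{alt2_cond}. You flag the one delicate point --- ``the right-hand-side normalization coming from $|Q_1|=|B_1|\cdot|I_1|$'' --- but leave it as a hand-wave, and if you actually carry it out the contradiction does not close. After the rescaling \cref{9.69} one has $Q_1=B_1\times(-1,1)$, so $|I_1|=2$ and $|Q_1|=2|B_1|$. If \cref{9.68} fails for every $t\in(-1,-\nu/2)$, integrating over that interval of length $1-\nu/2$ gives
\[
\int_{-1}^{-\nu/2}\abs{\left\{B_1:w_{x_i}(\cdot,t)\geq\tfrac12\right\}}\,dt \;>\; \frac{1-\nu}{1-\nu/2}\,|B_1|\cdot\lbr 1-\tfrac{\nu}{2}\rbr \;=\;(1-\nu)\,|B_1|,
\]
whereas Fubini over the full time interval $(-1,1)$ together with \cref{alt2_cond} only yields the upper bound
\[
\int_{-1}^{-\nu/2}\abs{\left\{B_1:w_{x_i}(\cdot,t)\geq\tfrac12\right\}}\,dt \;\leq\; \abs{\left\{Q_1:w_{x_i}\geq\tfrac12\right\}} \;<\;(1-\nu)\,|Q_1|\;=\;2(1-\nu)\,|B_1|.
\]
Since $(1-\nu)|B_1|<2(1-\nu)|B_1|$, the two estimates are perfectly consistent and no contradiction follows. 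You should be aware that the paper's own proof of \cref{cht} contains this identical gap: it writes ``$=$'' where only ``$\geq$'' is available (the integral runs over the subinterval $(-1,-\nu/2)$, not all of $(-1,1)$), and then declares $(1-\nu)|Q_1|>(1-\nu)|B_1|$ to be a contradiction, which it is not. To repair the lemma one must absorb the factor $|I_1|=2$, e.g.\ by tightening the constant in \cref{9.68}, enlarging the interval in which $t_\ast$ is sought, or working on a cylinder whose time interval has unit length; as written, your argument and the paper's share the same factor-of-two mismatch. Your aside about the measurability in $t$ of the slice measure is correct and routine.
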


\begin{proof}
    The proof is by contradiction, suppose \cref{9.68} does not hold for any $t \in (-1,-\frac{\nu}{2})$, then we have 
    \begin{equation*}
        \begin{array}{rcl}
        (1-\nu) |Q_1|\overset{\cref{alt2_cond}}{>}    \abs{\left\{ Q_1 : w_{x_i} \geq \frac{1}{2}\right\}} & = & \int_{-1}^{-\frac{\nu}{2}}  \abs{\left\{ B_{1} : w_{x_i}(x,t) \geq \frac{1}{2}\right\}} \ dt \\
            &\overset{\text{\cref{9.68} fails}}{>} & \lbr (1-\nu)\rbr |B_{1}| ,
        
        \end{array}
    \end{equation*}
     which is a contradiction.
\end{proof}

\subsubsection{Rescaling the equation}
From the rescaling \cref{9.69}, let us define
    \begin{equation}\label{9.73}
        \hat{\aa}(t,\zeta) := \frac{1}{\mu^{p-1}} \lbr \aa_p(\mu \zeta)+a(t)\aa_q(\mu \zeta)\rbr,
    \end{equation}
        then, we see that $w$ solves 
        \begin{equation}\label{9.74}
            w_t - \dv \hat{\aa}(t,\nabla w) = 0 \txt{in} Q_1. 
        \end{equation}

\begin{lemma}\label{lemma_elip_resc}
    Differentiating \cref{9.74} with respect to $x_i$ for some $i \in \{1,2,\ldots,N\}$, we get
    \begin{equation}\label{w_x_i}
    (w_{x_i})_t - \dv \pa\hat{\aa}(\nabla w)\nabla w_{x_i} = 0 \txt{in} Q_1. 
    \end{equation}
    Then $\pa\hat{\aa}(\zeta)$ and $\hat{\aa}(\zeta)$ satisfies the following structure conditions:
    \begin{equation}\label{9.75}
        \begin{array}{c}
            |\hat{\aa}(\zeta)| + |\pa\hat{\aa}(\zeta)| |\zeta| \leq C_1   \lbr |\zeta|^{p-1} + M|\zeta|^{q-1}\rbr,\\
            C_0 |\zeta|^{p-2} |\eta|^2 \leq \iprod{\pa \hat{\aa}(\zeta)\eta}{\eta}.
        \end{array}
    \end{equation}
\end{lemma}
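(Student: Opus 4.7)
The proof is a direct computation from the definition of $\hat{\aa}$ in \cref{9.73}; the entire content is bookkeeping with the powers of $\mu$ together with the hypotheses \descref{H1}{H1} and \descref{H2}{H2} on $\aa_p,\aa_q$, plus the standing assumption $\mu\leq\mu_0\leq 1$ which forces $\mu^{q-p}\leq 1$ since $q\geq p$.

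The plan is as follows. First I would apply the chain rule to \cref{9.73} to obtain
\[
\pa\hat{\aa}(t,\zeta) \;=\; \mu^{2-p}\bigl(\aa'_p(\mu\zeta)+a(t)\aa'_q(\mu\zeta)\bigr),
\]
and then derive the equation \cref{w_x_i} for $w_{x_i}$ by formally differentiating the PDE \cref{9.74} with respect to $x_i$ (this step is standard and is rigorously justified by the approximation hypothesis \cref{hyp_H}).

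Next I would prove the upper bound. By \descref{H2}{H2},
\[
|\hat{\aa}(t,\zeta)|\;\leq\;\frac{1}{\mu^{p-1}}\bigl(C_1|\mu\zeta|^{p-1}+MC_1|\mu\zeta|^{q-1}\bigr)\;=\;C_1|\zeta|^{p-1}+MC_1\mu^{q-p}|\zeta|^{q-1},
\]
and an identical manipulation, after multiplying $|\pa\hat{\aa}(t,\zeta)|$ by $|\zeta|$ and writing $|\zeta|=\mu^{-1}|\mu\zeta|$, gives the same bound for $|\pa\hat{\aa}(t,\zeta)||\zeta|$. Since $q\geq p$ and $\mu\leq 1$, we have $\mu^{q-p}\leq 1$, so both quantities are bounded by $C_1(|\zeta|^{p-1}+M|\zeta|^{q-1})$ as claimed.

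For the coercivity, I would just use that $a(t)\geq 0$ and that $\iprod{\aa'_q(\mu\zeta)\eta}{\eta}\geq 0$ by \descref{H1}{H1}, so this term can be dropped. The remaining piece is
\[
\iprod{\pa\hat{\aa}(t,\zeta)\eta}{\eta}\;\geq\;\mu^{2-p}\iprod{\aa'_p(\mu\zeta)\eta}{\eta}\;\geq\;\mu^{2-p}\cdot C_0|\mu\zeta|^{p-2}|\eta|^2\;=\;C_0|\zeta|^{p-2}|\eta|^2,
\]
which is exactly the lower bound in \cref{9.75}. There is no real obstacle here; the only subtlety worth flagging is the role of the hypothesis $\mu\leq 1$, which is precisely what makes the $q$-growth term fit under a bound with the correct $C_1$ (without it one would pick up an extra factor $\mu^{q-p}$ that degenerates as $\mu\to 0$ and is the reason the case $\mu_0>1$ is handled separately by rescaling, as noted in the remark following~\cref{corthem2}).
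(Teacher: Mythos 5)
Your proof is correct and is exactly the computation the paper has in mind; the paper's own proof simply says ``follows directly from \descref{H1}{H1}, \descref{H2}{H2}, \cref{9.73}, and $\mu\leq\mu_0\leq 1$,'' and you have filled in precisely those details. One small cosmetic remark: by bounding $|\hat{\aa}(\zeta)|$ and $|\pa\hat{\aa}(\zeta)||\zeta|$ separately by $C_1\lbr|\zeta|^{p-1}+M|\zeta|^{q-1}\rbr$ each, your argument only gives the sum $\leq 2C_1\lbr|\zeta|^{p-1}+M|\zeta|^{q-1}\rbr$, which is a constant $2C_1$ rather than the $C_1$ appearing in \cref{9.75}. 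To recover the stated constant, note that \descref{H2}{H2} already bounds the \emph{sum} $|\aa_p(z)|+|\aa'_p(z)||z|\leq C_1|z|^{p-1}$; regrouping before applying \descref{H2}{H2},
\[
|\hat{\aa}(t,\zeta)|+|\pa\hat{\aa}(t,\zeta)||\zeta| \leq \frac{1}{\mu^{p-1}}\Bigl(|\aa_p(\mu\zeta)|+|\aa'_p(\mu\zeta)||\mu\zeta|\Bigr)+\frac{a(t)}{\mu^{p-1}}\Bigl(|\aa_q(\mu\zeta)|+|\aa'_q(\mu\zeta)||\mu\zeta|\Bigr)\leq C_1|\zeta|^{p-1}+MC_1\mu^{q-p}|\zeta|^{q-1},
\]
which yields \cref{9.75} with the original $C_1$ after using $\mu^{q-p}\leq 1$. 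This is immaterial for the rest of the argument, but it matches the lemma as stated.
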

\begin{proof}
    The proof follows directly from \descref{H1}{H1},~\descref{H2}{H2}, \cref{9.73}, and the condition $\mu\leq \mu_0\leq 1$.
\end{proof}

\subsubsection{Logarithmic and Energy estimates}
\begin{lemma}\label{log_estimate}
    Let $0 < \eta_0 < \nu$ and $k \geq \frac{1}{4}$ be any two fixed numbers (recall $\nu$ is from \cref{alt2}) and consider the function
\begin{equation*}%
    \Psi(z) := \log^+ \lbr \frac{\nu}{\nu - (z-(1-\nu))_+ + \eta_0} \rbr.
\end{equation*}%
Then there exists constant $C = C(N,p,C_0,C_1,M)$ such that for all $ t_1,t_2 \in (-1,1)$ with $t_1<t_2$ and all $s \in (0,1)$, there holds
\begin{equation*}
    \int_{B_{s} \times \{t_2\}} \Psi^2((w_{x_i}-k)_+) \ dx \leq \int_{B_{1} \times \{t_1\}} \Psi^2((w_{x_i}-k)_+) \ dx + \frac{C}{(1-s)^2} \iint_{B_1\times (t_1,t_2)} \Psi((w_{x_i}-k)_+) \ dz. 
\end{equation*}
\end{lemma}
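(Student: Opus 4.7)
My plan is to test the differentiated equation~\cref{w_x_i} with the function $(\Psi^2)'((w_{x_i}-k)_+)\phi^2(x) = 2\Psi\Psi'\phi^2$, where $\phi \in C_c^\infty(B_1)$ is a \emph{time-independent} cutoff satisfying $\phi \equiv 1$ on $B_s$, $\phi = 0$ near $\pa B_1$, and $|\nabla\phi|\leq C/(1-s)$. Since $\Psi' = 0$ wherever $\Psi = 0$, this test function is well defined despite the lack of smoothness of $z \mapsto (z-(1-\nu))_+$; and since $\phi$ vanishes on $\pa B_1$, no lateral boundary terms appear upon integration by parts.

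Because $\phi$ is time-independent, the parabolic term reduces to $\int_{B_1}\Psi^2\phi^2\,dx\big|_{t_1}^{t_2}$, producing the two slice integrals in the target estimate. For the divergence term, writing $B := \pa\hat{\aa}(\nabla w)$ and expanding $\nabla(2\Psi\Psi'\phi^2) = 2(\Psi'^2+\Psi\Psi'')\phi^2\nabla w_{x_i} + 4\Psi\Psi'\phi\nabla\phi$, one obtains a coercive diagonal contribution together with a cross term. The key algebraic identity, verified by noting that on $\{\Psi > 0\}$ we have $\Psi(z) = \log\nu - \log(1-z+\eta_0)$, is
\begin{equation*}
\Psi''(z) = (\Psi'(z))^2, \qquad \text{hence} \qquad \Psi'^2 + \Psi\Psi'' = (1+\Psi)\Psi'^2,
\end{equation*}
which is the standard logarithmic mechanism that renders the cross term absorbable.

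The observation that tames the nonstandard growth is that on the support of $\Psi$ one has $(w_{x_i}-k)_+ > 1 - \nu + \eta_0$, so $w_{x_i} > k + 1 - \nu + \eta_0 \geq 1/4$; combined with the a priori bound $|\nabla w|\leq 1$ from~\cref{9.70}, this yields $\tfrac{1}{4}\leq |\nabla w|\leq 1$ throughout the support of the test function. Together with $q \geq p$, the structure conditions~\cref{9.75} then give universal upper and lower bounds on $B$ with constants depending only on $(N,p,q,C_0,C_1,M)$. Applying Young's inequality to the cross term in the form
\begin{equation*}
4\bigl|\Psi\Psi'\phi\iprod{B\nabla w_{x_i}}{\nabla\phi}\bigr| \leq (1+\Psi)\Psi'^2\phi^2 \iprod{B\nabla w_{x_i}}{\nabla w_{x_i}} + \frac{C}{1+\Psi}\Psi^2|\nabla\phi|^2
\end{equation*}
absorbs one copy of the coercive term, and the elementary inequality $\Psi^2/(1+\Psi) \leq \Psi$ then bounds the remainder by $C\Psi|\nabla\phi|^2 \leq \tfrac{C}{(1-s)^2}\Psi$. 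Dropping the nonnegative coercive term on the left yields the claim. The main obstacle sidestepped here is the potential degeneracy or singularity of $\pa\hat{\aa}$, which is simultaneously avoided for both the $p$- and $q$-phases precisely because $|\nabla w|$ is bounded away from $0$ and $\infty$ on the support of $\Psi$; this is the role of the hypothesis $k \geq 1/4$.
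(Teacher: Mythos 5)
Your proposal is correct and follows essentially the same route as the paper: test the differentiated equation with $\Psi\Psi'\phi^2$ (the factor of $2$ is immaterial), use $k\geq 1/4$ together with $|\nabla w|\leq 1$ to confine the support of the test function to $\{1/4 \leq |\nabla w| \leq 1\}$, conclude uniform ellipticity of $\pa\hat{\aa}(\nabla w)$ there, and then run the standard logarithmic estimate. The only difference is that the paper, once uniform ellipticity is established, refers to DiBenedetto's book for the closing computation, whereas you spell out the identity $\Psi''=(\Psi')^2$ and the Young absorption explicitly; the substance is the same.
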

\begin{proof}
    Let us take $\Psi((w_{x_i}-k)_+) \Psi'((w_{x_i}-k)_+) \zeta^2$ as a test function in \cref{9.74}. Since $k \geq \frac1{4}$, we see that $(w_{x_i} - k)_+ =0 $ whenever $w_{x_i} \leq \frac1{4}$. On this set, $\Psi(0) = \log^+ \lbr \frac{\nu}{\nu+\eta_0}\rbr = 0$, thus we see that $\spt{\Psi((w_{x_i}-k)_+)}$ is contained in  the set $\left\{w_{x_i} \geq \frac1{4}\right\}$. Thus making use of \cref{9.70}, we make the following observations,
    \begin{align}
        \label{ellip_new}
        \frac{1}{4^{p-1}} \leq \frac{ |\nabla w|^{p-1} }{  |\nabla w| } + M\frac{ |\nabla w|^{q-1} }{  |\nabla w| }  \leq 4 + 4M \txt{on} \left\{w_{x_i} \geq \frac1{4}\right\}.
    \end{align}
Substituting \cref{ellip_new} into \cref{9.75}, we see that $\pa \hat{\aa}(\nabla w)$ is uniformly elliptic and we rewrite \cref{w_x_i} as follows:
\begin{equation}\label{unif_elliptic}
    (w_{x_i})_t - \dv B(x,t) \nabla w_{x_i} = 0, \txt{with} \frac{C_0}{4^{p-1}}|\zeta|^2 \leq \iprod{B(x,t) \zeta}{\zeta} \leq {4(1+M)C_1} |\zeta|^2.
\end{equation}
We can now follow the proof of \cite[Proposition 3.2 of Chapter II]{dibenedettoDegenerateParabolicEquations1993} (see also \cite[(12.7) of Chapter IX]{dibenedettoDegenerateParabolicEquations1993}) to get 
\[
    \int_{B_{s} \times \{t_2\}} \Psi^2((w_{x_i}-k)_+) \ dx \leq \int_{B_{s} \times \{t_1\}} \Psi^2((w_{x_i}-k)_+) \ dx + \frac{C_{(N,P,C_0,C_1,M)}}{(1-s)^2} \iint_{B_1\times (t_1,t_2)} \Psi((w_{x_i}-k)_+) \ dz. 
\]

\end{proof}

\begin{lemma}
    \label{energy_estimate}
    Let $k \geq \frac1{4}$ be some fixed constant, then $w$ solving \cref{9.74} satisfies
    \[
    \begin{array}{l}
        \sup_{t_0<t<t_1} \int_{B_1}  (w_{x_i} - k)_+^2 \zeta^2 \ dx  + C \iint_{B_1\times (t_0,t_1)} |\nabla (w_{x_i} - k)_+|^2 \zeta^2 \ dz\leq \int_{B_1 \times \{t=t_0\}} (w_{x_i} - k)_+^2 \zeta^2 \ dx \\
        \hspace*{10cm} + C\iint_{B_1 \times (t_0,t_1)} (w_{x_i} - k)_+^2 |\nabla \zeta|^2  \ dz\\
        \hspace*{10cm} + C\iint_{B_1 \times (t_0,t_1)} (w_{x_i} - k)_+^2 |\zeta| |\zeta_t|  \ dz,
        \end{array}
    \]
where $t_0,t_1 \in [-1,1)$ with $t_0<t_1$ are some fixed time slices and $\zeta \in C^{\infty}(Q_1)$ is a cut-off function. 
\end{lemma}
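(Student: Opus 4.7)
The plan is to differentiate \cref{9.74} with respect to $x_i$ (as already done in \cref{w_x_i}) and then test the resulting equation with $(w_{x_i}-k)_+\,\zeta^2\,\chi_{[t_0,t]}$ for $t\in(t_0,t_1)$. The crucial observation — exactly the one exploited in the proof of \cref{log_estimate} — is that because $k\ge 1/4$, the support of $(w_{x_i}-k)_+$ is contained in $\{w_{x_i}\ge 1/4\}\subseteq\{|\nabla w|\ge 1/4\}$, and combined with $|\nabla w|\le 1$ from \cref{9.70}, this pins $|\nabla w|$ into a compact interval on the support. Then by \cref{ellip_new} and \cref{9.75}, the matrix $B(x,t):=\partial\hat{\mathcal{A}}(\nabla w)$ is uniformly elliptic there with constants depending only on $N,p,C_0,C_1,M$, reducing our equation on the relevant support to the uniformly elliptic linear form in \cref{unif_elliptic}. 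From that point on the argument is a textbook Caccioppoli estimate.

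For the parabolic term, using the standard Steklov-average justification (which we suppress, as in the paper) we get
\[
\int_{t_0}^{t}\!\!\int_{B_1}(w_{x_i})_s\,(w_{x_i}-k)_+\zeta^2\,dx\,ds \;=\; \tfrac12\int_{B_1\times\{t\}}(w_{x_i}-k)_+^2\zeta^2\,dx \;-\; \tfrac12\int_{B_1\times\{t_0\}}(w_{x_i}-k)_+^2\zeta^2\,dx \;-\;\int_{t_0}^{t}\!\!\int_{B_1}(w_{x_i}-k)_+^2\,\zeta\,\zeta_s\,dx\,ds.
\]
For the spatial term, expanding $\nabla[(w_{x_i}-k)_+\zeta^2]=\zeta^2\nabla(w_{x_i}-k)_+ + 2(w_{x_i}-k)_+\zeta\nabla\zeta$ and invoking the uniform ellipticity of $B(x,t)$ on the support gives a lower bound $C\int\!\!\int|\nabla(w_{x_i}-k)_+|^2\zeta^2\,dz$ from the principal piece, while the cross term is controlled by
\[
2\,\bigl|\langle B(x,t)\,\nabla(w_{x_i}-k)_+,\nabla\zeta\rangle\bigr|\,(w_{x_i}-k)_+\,\zeta \;\le\; \varepsilon\,|\nabla(w_{x_i}-k)_+|^2\zeta^2 + C_\varepsilon\,(w_{x_i}-k)_+^2|\nabla\zeta|^2
\]
by Young's inequality. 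Absorbing the $\varepsilon$-piece into the left-hand side and taking the supremum over $t\in(t_0,t_1)$ yields the stated estimate.

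The only real obstacle is the first paragraph's observation: the equation for $w_{x_i}$ is genuinely of $(p,q)$-growth and would not admit a clean linear Caccioppoli bound in general. What saves us is the truncation level $k\ge 1/4$ together with \cref{9.70}, which simultaneously supplies \emph{both} a lower and an upper bound for $|\nabla w|$ on the support of $(w_{x_i}-k)_+$. Once this is in place, the uniform ellipticity of $B(x,t)$ there is immediate and the remainder is routine. The dependence of $C$ on $N,p,C_0,C_1,M$ enters through the ellipticity bounds of $B$ on the set $\{|\nabla w|\in[1/4,1]\}$.
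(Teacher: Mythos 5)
Your proof is correct and follows essentially the same route as the paper: test the differentiated equation with $(w_{x_i}-k)_+\zeta^2$, observe that $k\ge 1/4$ together with \cref{9.70} confines $|\nabla w|$ to $[1/4,1]$ on the support so that $\partial\hat{\aa}(\nabla w)$ is uniformly elliptic as in \cref{unif_elliptic}, and then run the linear Caccioppoli computation. The only difference is that you spell out the Caccioppoli steps explicitly where the paper simply cites \cite[Proposition~3.1 of Chapter~II]{dibenedettoDegenerateParabolicEquations1993}.
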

\begin{proof}
    We take $(w_{x_i} - k)_+ \zeta^2$ as a test function in \cref{9.74}, noting that we retain the uniformly elliptic structure of $B(x,t)$ in \cref{unif_elliptic} from the choice of test function. Thus proceeding according to \cite[Proposition 3.1 of Chapter II]{dibenedettoDegenerateParabolicEquations1993} gives the desired estimate.
\end{proof}

\subsubsection{De Giorgi type iteration}

The proof of the following lemma, showing an expansion of positivity in time, can be found in~\cite[Lemma~8.22]{adimurthiUnifiedApproachAlpha2020}.
\begin{lemma}\label{lemma921}
    There exists $\eta_0 = \eta_0(N,p,\nu) \in (0,\nu)$ such that for all $t \in (t_{\ast},1)$ with $t_{\ast}$ as in Lemma \ref{cht}, there holds
    \[
        \abs{\left\{x \in B_1: w_{x_i} (x,t) > (1-\eta_0)  \right\}} \leq \lbr 1-\frac{\nu^2}{4} \rbr |B_1|.
    \]
\end{lemma}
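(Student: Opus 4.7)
The plan is to propagate in time the good-slice information from \cref{cht} via the logarithmic estimate of \cref{log_estimate}, then to calibrate the two free parameters $\eta_0$ and a spatial radius $s \in (1/2,1)$ using the elementary algebraic fact $\tfrac{1-\nu}{1-\nu/2} < 1-\nu^2/4$.

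First I would record that \cref{cht} yields
\begin{equation*}
\abs{\{x \in B_1 : w_{x_i}(x,t_\ast) < \tfrac{1}{2}\}} \geq \frac{\nu/2}{1-\nu/2}\,|B_1|,
\end{equation*}
so at the good time slice a definite fraction of $B_1$ has $w_{x_i}(\cdot,t_\ast) \leq 1/2$; I will assume $\nu \leq 1/2$ without loss of generality. I would choose the constant $k\geq 1/4$ in \cref{log_estimate} so that $\Psi((w_{x_i}-k)_+)$ vanishes when $w_{x_i}\leq 1/2$ and is pointwise bounded above by $\log(\nu/\eta_0)$, with $\Psi \geq \log(\nu/(2\eta_0))$ on the ``dangerous'' set $\{w_{x_i} > 1-\eta_0\}$.

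For each $t \in (t_\ast,1)$ and $s \in (1/2,1)$, \cref{log_estimate} with $t_1 = t_\ast,\ t_2 = t$ then gives
\begin{equation*}
\int_{B_s \times \{t\}} \Psi^2 \, dx \leq \int_{B_1 \times \{t_\ast\}} \Psi^2 \, dx + \frac{C}{(1-s)^2}\iint_{B_1 \times (t_\ast,t)} \Psi \, dz.
\end{equation*}
I would bound the first right-hand integral by $\tfrac{1-\nu}{1-\nu/2}|B_1|(\log(\nu/\eta_0))^2$ (using that $\Psi$ vanishes on the good set and the pointwise bound on its complement) and the second by $\tfrac{2C}{(1-s)^2}|B_1|\log(\nu/\eta_0)$ (using that the time interval has length at most $2$). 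Converting the integral lower bound on $\{w_{x_i} > 1-\eta_0\}$ into a measure bound yields
\begin{equation*}
\abs{\{B_s : w_{x_i}(\cdot,t) > 1-\eta_0\}} \leq \left[\frac{1-\nu}{1-\nu/2}\left(\frac{\log(\nu/\eta_0)}{\log(\nu/(2\eta_0))}\right)^2 + \frac{2C(1-s)^{-2}}{\log(\nu/(2\eta_0))}\right]|B_1|.
\end{equation*}
Adding the trivial tail $|B_1\setminus B_s| \leq N(1-s)|B_1|$ and using $\tfrac{1-\nu}{1-\nu/2} = 1 - \tfrac{\nu}{2-\nu} < 1-\nu^2/4$ (equivalent to $(\nu-1)^2 + 3 > 0$), I would first fix $s = s(N,\nu)\in(1/2,1)$ close enough to $1$ that $N(1-s) < \tfrac14(\tfrac{\nu}{2-\nu}-\tfrac{\nu^2}{4})$, and then send $\eta_0 \downarrow 0$ so that the log-ratio in the bracket tends to $1$ and the final term tends to $0$. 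This selects $\eta_0 = \eta_0(N,p,\nu) \in (0,\nu)$ for which the total measure is at most $(1-\nu^2/4)|B_1|$.

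The hard part will be the joint calibration of $s$ and $\eta_0$: the $(1-s)^{-2}$ factor from the logarithmic estimate pushes $s$ away from $1$, while the spatial tail $|B_1\setminus B_s|$ forces $s$ toward $1$. The resolution is that $1/\log(\nu/(2\eta_0))$ tends to $0$ independently of $s$, so one fixes $s$ first to control the tail and then chooses $\eta_0$ small enough (depending on the now-fixed $s$) to dominate the remaining error.
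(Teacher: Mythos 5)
Your overall strategy is the right one, and it is the same strategy as the paper's (which defers to \cite[Lemma~8.22]{adimurthiUnifiedApproachAlpha2020}): propagate the information at the good slice $t_\ast$ from \cref{cht} to all later times via the logarithmic estimate of \cref{log_estimate}, extract a measure bound from the lower bound of $\Psi$ on the dangerous set, add the trivial annulus $|B_1\setminus B_s|$, and finally calibrate $s$ first and then $\eta_0\downarrow 0$ using $\tfrac{1-\nu}{1-\nu/2}=1-\tfrac{\nu}{2-\nu}<1-\tfrac{\nu^2}{4}$. That algebraic identity, the ``fix $s$, then $\eta_0$'' resolution of the tension between the $(1-s)^{-2}$ factor and the spatial tail, and the tail bound $|B_1\setminus B_s|\leq N(1-s)|B_1|$ are all fine.

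However, there is a genuine gap in the step where you ``choose $k\geq 1/4$'' so that $\Psi\bigl((w_{x_i}-k)_+\bigr)\geq\log\bigl(\nu/(2\eta_0)\bigr)$ on $\{w_{x_i}>1-\eta_0\}$: with $\Psi$ exactly as written in \cref{log_estimate}, no such $k$ exists. Indeed, $\Psi(z)\geq\log\bigl(\nu/(2\eta_0)\bigr)$ forces $\bigl(z-(1-\nu)\bigr)_+\geq\nu-\eta_0$, i.e.\ $z\geq 1-\eta_0$; plugging $z=(w_{x_i}-k)_+$ and recalling $|\nabla w|\leq 1$, this needs $(w_{x_i}-k)_+\geq 1-\eta_0$ for $w_{x_i}$ barely exceeding $1-\eta_0$, which forces $k\leq 0$ (and even the weaker requirement that $\{(w_{x_i}-k)_+\geq 1-\eta_0\}$ be nonempty forces $k\leq\eta_0<\nu<1/2$). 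In fact, with $k\geq 1/4$ and $\nu<1/4$, $\Psi\bigl((w_{x_i}-k)_+\bigr)$ vanishes identically, since its support $\{(w_{x_i}-k)_+>1-\nu+\eta_0\}$ is empty. The point you are implicitly relying on — and should make explicit — is that the logarithmic function must really be applied to $w_{x_i}$ (equivalently, the offset $1-\nu$ in the definition of $\Psi$ must absorb $k$), i.e.\ one tests with $\widetilde\Psi(w_{x_i})\widetilde\Psi'(w_{x_i})\zeta^2$ where
\[
\widetilde\Psi(w_{x_i}):=\log^{+}\!\left(\frac{\nu}{\nu-(w_{x_i}-(1-\nu))_{+}+\eta_0}\right);
\]
this $\widetilde\Psi$ vanishes on $\{w_{x_i}\leq 1-\nu+\eta_0\}\supset\{w_{x_i}\leq 1/2\}$, is bounded by $\log(\nu/\eta_0)$, and is $\geq\log\bigl(\nu/(2\eta_0)\bigr)$ precisely on $\{w_{x_i}\geq 1-\eta_0\}$, which is exactly what your calibration needs. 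The restriction $k\geq 1/4$ in \cref{log_estimate} is there only to guarantee that the test function is supported in $\{w_{x_i}\geq 1/4\}$ so that the linearized operator is uniformly elliptic via \cref{ellip_new}; with $\widetilde\Psi(w_{x_i})$, the support is $\{w_{x_i}>1-\nu+\eta_0\}\subset\{w_{x_i}>1/2\}$, so the same ellipticity conclusion holds. A minor further inaccuracy: your second bracketed term should carry the ratio $\log(\nu/\eta_0)/\bigl(\log(\nu/(2\eta_0))\bigr)^2$ rather than $1/\log\bigl(\nu/(2\eta_0)\bigr)$; since this ratio is comparable to $1/\log\bigl(\nu/(2\eta_0)\bigr)$ and still tends to $0$ as $\eta_0\downarrow 0$, the calibration is unaffected, but the expression as written is not quite right.

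Once the correct form of the log function is used, the rest of your argument is correct and coincides with the paper's referenced proof.
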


Given Lemma  \ref{lemma921}, the rest of the proof of the second alternative Proposition \ref{alt2} is as in the linear case. Complete details of the rest of the proof may be found in~\cite{adimurthiUnifiedApproachAlpha2020}.

\section{Proof of \texorpdfstring{\cref{corthem1}}. for $\mu_0>1$.} The proof of~\cref{corthem1} for the case $\mu_0\leq 1$ is given above in~\cref{them1muless1}. For the case $\mu_0>1$, we proceed as follows. We define the following rescaling:
\[
    \tilde{w}(x,t)=\frac{u(x,t)}{\mu_0},    
\]
then $\tilde{w}$ satisfies the equation
\begin{equation*}
    \tilde{w}_t-\dv \lbr \tilde{\aa}_p(\nabla \tilde{w}) + \sum_{i=1}^k a_i(t) \tilde{\aa}_{q_i}(\nabla \tilde{w}) \rbr = 0,
\end{equation*} where 
\[
     \tilde{\aa}_p(\zeta)=\frac{1}{\mu_0}\aa_p(\mu_0\zeta)\txt{and} \tilde{\aa}_{q_i}(\zeta)=\frac{1}{\mu_0}\aa_{q_i}(\mu_0\zeta),
\] and the following structure conditions are satisfied for $i\in\{1,2,\ldots,k\}$:
\begin{description}
    \descitem{S1}{S1} $\left\langle \tilde{\mathcal{A}}'_p(z)\zeta,\zeta\right\rangle\geq C_0 \mu_0^{p-2} |z|^{p-2}|\zeta^2|$ and $\left\langle \tilde{\mathcal{A}}'_{q_i}(z)\zeta,\zeta\right\rangle\geq C_0 \mu_0^{q_i-2} |z|^{q_i-2}|\zeta^2|$.
    \descitem{S2}{S2} $|\tilde{\mathcal{A}}_p(z)|+|\tilde{\mathcal{A}}'_p(z)||z| \leq C_1 \mu_0^{p-2} |z|^{p-1}$ and $|\tilde{\mathcal{A}}_{q_i}(z)|+|\tilde{\mathcal{A}}'_{q_i}(z)||z| \leq C_1 \mu_0^{q_i-2} |z|^{q_i-1}$.
\end{description}
Since $\tilde{w}$ satisfies $\sup_{4Q_0}|\nabla\tilde{w}|\leq 1$, we are in the regime of~\cref{them1muless1}, whose application gives~\cref{corthem1}.

\section{Proof of \texorpdfstring{\cref{corthem2}}.} 
We begin by proving~\cref{corthem2} for the case $\mu_0\leq 1$. The proof is similar to that of~\cref{them1muless1} in all respects except in the proof of~\cref{alt1} and~\cref{alt2}. We shall list below the modifications required in the variable exponent case:
Let $u$ be a weak solution of 
\begin{equation}
    \label{main_holder_cor2}
    u_t - \dv \lbr |\nabla u|^{p(t)-2} \nabla u\rbr= 0 \txt{on} 4Q_0,
\end{equation}
with $1 < p \leq p(t) \leq q < \infty$. 
\begin{description}
    \item[Modification 1:] The analogue of \cref{energy_w} for the double phase takes the following form:  Since $u$ is a weak solution of \cref{main_holder_cor2} on $Q_r^{\mu}$ for some $r \in (0,R]$, let us perform the following rescaling: Define
\begin{equation}\label{w_rescale_cor2}
        w(x,t) = \frac{u(\mu^{-1}rx,\mu^{-p}r^2t)}{\mu^{-1}r},
    \end{equation}
    then $w$ solves
    \begin{equation}\label{def_w_cor2}
        \mu^{p-1} w_t - \mu \dv \aa(\nabla w) = 0 \txt{on} Q_1 := B_1 \times I_1.
    \end{equation}
  Then the following energy estimate is satisfied:  Let $k \in \RR$ and  $\phi \in C^{\infty}(Q_1)$ be any cut-off function with $\phi = 0$ on $\pa_p(Q_1)$ then the following  holds:
\begin{equation*}
            \begin{array}{l}
                \sup_{t \in I_1}\mu^{p-1} \int_{B_1} (w_{x_i} - k)_-^2 \phi^2  \ dx+ {C_0}\mu \iint_{Q_1}|\nabla w|^{p(t)-2} |\nabla (w_{x_i} - k)_-|^2 \phi^2 \ dz \\
                \hspace*{3cm}\qquad \leq \  \mu^{p-1} \int_{B_1\times \{t=1\}} (w_{x_i} - k)_-^2 \phi^2  \ dx+ 2\mu^{p-1} \iint_{Q_1} (w_{x_i} - k)_-^2 \phi \phi_t \ dz \\
                \hspace*{3cm}\qquad \qquad  + \frac{4C_1^2}{C_0}\mu  \iint_{Q_1}  |\nabla w|^{p(t)} |\nabla  \phi|^2 \lsb{\chi}{\{w_{x_i} \leq k\}}  \ dz   \\
                \hspace*{3cm}\qquad \qquad + 2C_1\mu \iint_{Q_1}  |\nabla w|^{p(t)-1} (w_{x_i} -k)_- (|\phi||\nabla^2\phi| + |\nabla \phi|^2)\ dz.
            \end{array}
        \end{equation*}
    \item[Modification 2:] The analogue of \cref{3.25} becomes
\begin{equation*}
        \begin{array}{rcl}
            \mu^{p-1} \|\tw_m\eta_m\|_{V^2(Q_{m})}^2 & \leq & \mu^{p-1} \iint_{Q_1} (w_{x_i} - k_m)_-^2  \eta_m(\eta_m)_t^2\ dz \\
            && +  \mu \iint_{Q_1}|\nabla w|^{p(t)}   |\nabla \eta_m|^2\lsb{\chi}{\{w_{x_i} \leq k_m\}}\ dz\\
            && + \mu \iint_{Q_1}|\nabla w|^{p(t)-1} (w_{x_i} - k_m)_- ( |\eta_m||\nabla^2\eta_m| + |\nabla \eta_m|^2)\\
            && + \mu^{p-1} \iint_{Q_1} (w_{x_i} - k_m)_-^2  |\nabla  \eta_m|^2 \\
            & \leq & C \mu^{p+1} 4^m  |A_m|,
\end{array}
\end{equation*} where the last inequality follows from $\mu\leq \mu_0\leq 1$.%
    \item[Modification 3:] It is easy to see that the analogue of the matrix $B(x,t)$ in \cref{lemma3.2} is also uniformly elliptic. As in the proof of \cref{lemma3.1}, we rescale according to \cref{w_rescale_cor2}, then the hypothesis becomes 
     \begin{equation*}
         0 < \frac{\mu}{4} \leq |\nabla w(x,t)| \leq  A\mu \txt{for any} (x,t) \in Q_1.
     \end{equation*}%
Differentiating \cref{def_w_cor2} and dividing the resulting equation by $\mu^{p-1}$, we see that $w_{x_i}$ solves
\begin{equation*}
    (w_{x_i})_t - \dv (B(x,t) \nabla w_{x_i}) = 0 \txt{where} B(x,t) = \mu^{2-p} \pa \aa(\nabla w).
\end{equation*}%
Furthermore, we see that the matrix $B(x,t)$ satisfies the following bounds:
\[
    C |\zeta|^2 \leq \iprod{B(x,t)\zeta}{\zeta} \leq C  |\zeta|^2,
\]
for any $\zeta \in \RR^N$ and $C= C(N,p,q,C_0,C_1)$.

    \item[Modification 4:]  Let us perform the following change of variables for the analogue of \cref{9.69}
    \begin{equation*}
        w(x,t) = \frac{u(\mu^{-1}rx,\mu^{-p}r^2t)}{r} \txt{for} (x,t) \in Q_1.
    \end{equation*}%
    Then we have 
    \begin{equation*}
       |\nabla w(x,t)| \leq 1 \txt{for any} (x,t) \in Q_1.
    \end{equation*}%
    The analogue of \cref{ellip_new} in \cref{log_estimate} now becomes
    \begin{equation*}
        \frac{1}{4^{q-1}}\leq\frac{1}{4^{p(t)-1}} \leq    \frac{|\nabla w|^{p(t)-1}}{|\nabla w|}   \leq 4 \txt{on} \left\{w_{x_i} \geq \frac1{4}\right\}.
    \end{equation*}
\end{description}
This concludes the adaptation of \cref{them1muless1} to obtain \cref{corthem2} in the case $\mu_0\leq 1$. 

\subsection{Proof of \texorpdfstring{\cref{corthem2}}. for $\mu_0>1$.} 

For the case $\mu_0>1$, we proceed as follows. We define the following rescaling:
\[
    \tilde{w}(x,t)=\frac{u(x,t)}{\mu_0},    
\]
then $\tilde{w}$ satisfies the equation
\begin{equation*}
    \tilde{w}_t-b(t)\dv \lbr |\nabla \tilde{w}|^{p(t)-2}\nabla \tilde{w} \rbr = 0 \txt{where}b(t)=\mu_0^{p(t)-2}.
\end{equation*} 
The function $b(t)$ satisfies $\mu_0^{p-2}\leq b(t)\leq \mu_0^{q-2}$. Since $\tilde{w}$ satisfies $\sup_{4Q_0}|\nabla\tilde{w}|\leq 1$, we can apply the previous case.

\bibliography{gradientholdermultiphase}
\bibliographystyle{plainurl}

\end{document}